\def\blfootnote{\xdef\@thefnmark{}\@footnotetext}
\newtheorem{thm}{Theorem}[section]
\newtheorem{cor}[thm]{Corollary}
\newtheorem{lem}[thm]{Lemma}
\theoremstyle{definition}
\newtheorem{defn}[thm]{Definition}
\theoremstyle{remark}
\newtheorem{rem}[thm]{Remark}
\newfont{\eufm}{eufm10}
\newcommand{\G}{\Gamma (G, X\cup \mathcal H)}
\newcommand{\dxh}{{\rm d}_{X\cup\mathcal H}}
\newcommand{\dx}{{\rm d}_X}
\newcommand{\lx}{l_X}
\newcommand{\Hl}{\{ H_\lambda \} _{\lambda \in \Lambda } }
\newcommand{\e}{\varepsilon }
\renewcommand{\phi}{\varphi}
\newcommand{\F }{{\rm Fix} (\phi )}
\newcommand{\A }{{\rm Aut} (G)}
\newcommand{\Lab}{{\bf Lab}}
\newcommand{\N}{\mathbb{N}}
\begin{document}

\title{Fixed subgroups of automorphisms of relatively hyperbolic groups}

\author{Ashot Minasyan}
\address[Ashot Minasyan]{School of Mathematics,
University of Southampton, Highfield, Southampton, SO17 1BJ, United
Kingdom.}  \email{aminasyan@gmail.com}

\author{Denis Osin}
\address[Denis Osin]{Department of Mathematics, Vanderbilt University, Nashville TN 37240, USA.}
\email{denis.osin@gmail.com}
\thanks{The second author was supported by the
NSF grants DMS-0605093, DMS-1006345, and by the RFBR grant 05-01-00895.}
\date{}

\begin{abstract}
Let $G$ be a finitely generated relatively hyperbolic group. We show that if no peripheral subgroup of $G$ is
hyperbolic relative to a collection of proper subgroups, then the fixed subgroup of every automorphism of $G$
is relatively quasiconvex. It follows that the fixed subgroup is itself relatively hyperbolic with respect to
a natural family of peripheral subgroups. If all peripheral subgroups of $G$ are slender (respectively,
slender and coherent), our result implies that the fixed subgroup of every automorphism of $G$ is
finitely generated (respectively, finitely presented). In particular, this happens when $G$ is a limit group,
and thus for any $\phi \in \A$, $\F$ is a limit subgroup of $G$.
\end{abstract}

\keywords{Relatively hyperbolic groups, fixed subgroups of automorphisms.}

\subjclass[2000]{20F67, 20E36, 20F65}

\maketitle

\section{Introduction}


Given a group $G$ and an automorphism $\phi \in \A$, let $\F$ denote the {\it fixed subgroup} of $\phi $, i.e.,
$$
\F =\{ g\in G\mid \phi (g)=g\} .
$$
Gersten \cite{Ger} proved that if $G$ is a finitely generated free group, then $\F $ is finitely generated for every $\phi \in \A$. Collins and Turner \cite{CT} generalized this result by showing that $\F $ has a finite Kuro\v s decomposition provided $G$ is a finite free product of freely indecomposable groups. Another generalization of Gersten's theorem was found by Neumann \cite{Neumann}, who showed that for every word
hyperbolic group $G$ and every $\phi \in \A$, $\F $ is quasiconvex in $G$.

In this paper we study fixed subgroups of automorphisms of a more general class of groups, which includes hyperbolic groups as well as finitely generated free products of freely indecomposable groups. More precisely, we deal with finitely generated groups hyperbolic relative to NRH subgroups. Recall that a nontrivial group $H$ is called {\it non-relatively hyperbolic} (or {\it NRH}) if $H$ is not hyperbolic relative to any collection of proper subgroups. The class of NRH groups includes many examples of interest. Below we list just some of them.

\begin{enumerate}
\item[(1)] Unconstricted groups (defined by C. Dru\c tu and M. Sapir in \cite{DS}).
Recall that a finitely generated group $H$ is {\it  unconstricted} if
some asymptotic cone of $H$ does not have cut points. Examples of
unconstricted groups include direct products of infinite groups,
non-virtually cyclic groups satisfying a nontrivial law (e.g., solvable groups and groups of
finite exponent) \cite{DS},  many lattices in higher rank semi-simple groups \cite{DMS}, etc.

\item[(2)] Suppose that a group $G$ is generated by a set $X$ consisting of elements of infinite order. The corresponding {\it commutativity graph} has $X$ as the set of vertices, two of which are joined by an edge if the corresponding elements commute. Assume that some adjacent pair of vertices generates ${\mathbb{Z}}\times{\mathbb{Z}}$. Then $G$ is NRH \cite{AAS}. For example, this class includes many constricted groups such as $Out(F_n)$ for $n\ge 3$, all but finitely many mapping class groups, and freely indecomposable right angled Artin groups.

\item[(3)] Non-virtually cyclic groups with infinite center \cite[Lemma 10.2]{Min-RAAG}.

\item[(4)] Non-virtually cyclic groups which do not contain non-abelian free subgroups (\cite[Prop. 6.5]{DS}).
In particular, non-virtually cyclic amenable groups as well as various `monsters'.
\end{enumerate}

In many cases when peripheral subgroups are relatively hyperbolic themselves, we can still get an NRH peripheral structure in the following way. Suppose that $G$ is hyperbolic relative to $\{ H_1, \ldots , H_m\}$ and every $H_i$ is hyperbolic relative to proper subgroups $\{ K_{i1}, \ldots , K_{in_i}\}$. Then $G$ is hyperbolic relative to $\mathcal K=\{K_{ij}\mid i=1,\ldots , m,\, j=1, \ldots , n_i\} $ (\cite[Cor. 1.14]{DS}). We exclude trivial subgroups from $\mathcal K$ and if some of the subgroups from $\mathcal K$ are hyperbolic relative to proper subgroups, we repeat this step again. We say that the process \textit{terminates}, if after some step we obtain a (possibly empty) collection of NRH peripheral subgroups. 

Note that the above process may not terminate even for hyperbolic groups. Recall that any hyperbolic group $G$ is  hyperbolic relative to any quasiconvex malnormal subgroup \cite{Bow}. Thus any infinite sequence of quasiconvex malnormal subgroups $G\gneqq H \gneqq K \gneqq ...$ leads to an infinite process. However, in this case there also exists an obvious process which does stop as $G$ is hyperbolic relative to the empty collection of subgroups.  Behrstock, Dru\c tu and Mosher showed that there exists a finitely generated group for which \textit{no}
 such a process terminates \cite[Proposition 6.3]{BDM}). The question of whether for every \textit{finitely presented} relatively hyperbolic group there exists a terminating process is still open.

Our main result is the following.

\begin{thm} \label{thm:main}
Let $G$ be a finitely generated group which is hyperbolic relative to a family of
NRH subgroups. Then for every $\phi \in \A$, the fixed subgroup  $\F $ is
relatively quasiconvex in $G$.
\end{thm}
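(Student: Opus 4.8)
The plan is to combine the rigidity of the peripheral structure forced by the NRH hypothesis with a Neumann-type fellow-travelling argument in the hyperbolic graph $\G$, and then to upgrade the resulting coarse information into genuine $\dx$-closeness of geodesics to $\F$.

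\emph{Step 1: $\phi$ respects the relatively hyperbolic structure.} Since every member of $\Hl$ is NRH, the collection $\{\phi(H_\lambda)\}_{\lambda\in\Lambda}$ is again a peripheral structure on $G$ all of whose members are NRH, and the peripheral structure with NRH peripheral subgroups is canonical up to conjugacy. Hence $\phi$ induces a permutation $\mu$ of $\Lambda$ and there are elements $g_\lambda\in G$ with $\phi(H_\lambda)=g_\lambda H_{\mu(\lambda)}g_\lambda^{-1}$. As $\Lambda$ is finite and each $H_\lambda$ is finitely generated, this shows that $\phi$ sends each $X$-letter to a word of bounded $\dx$-length and each peripheral syllable into a single peripheral coset lying at bounded $\dxh$-distance from the basepoint; consequently $\phi$ is bi-Lipschitz for $\dx$ and induces a quasi-isometry $\bar\phi$ of $\G$. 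This is precisely the step that collapses without the NRH assumption, since a general automorphism need not preserve the peripheral structure, and it is where that hypothesis is essential.

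\emph{Step 2: coarse invariance of geodesics between fixed points.} Fix $g\in\F$ and a geodesic $p$ in $\G$ from $1$ to $g$. Then $\bar\phi(p)$ is a quasigeodesic in the $\delta$-hyperbolic graph $\G$ with the same endpoints $1,g$, so by stability of quasigeodesics it lies within uniformly bounded Hausdorff distance of $p$ and, having matching endpoints, synchronously fellow-travels it. By Hruska's characterization of relative quasiconvexity it then suffices to produce, for each vertex $v$ of $p$, an element of $\F$ within bounded $\dx$-distance of $v$, the bound depending only on $G$, its peripheral structure, and $\phi$.

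\emph{Step 3: producing nearby fixed points.} This is the main obstacle: the fellow-travelling only controls how $p$ and $\bar\phi(p)$ relate as paths and does not by itself bound $\dxh(v,\phi(v))$, let alone $\dx(v,\F)$, so one must use that $\F$ is a \emph{subgroup}, not merely a $\bar\phi$-invariant set. I would argue via the trichotomy of elements of $G$ together with the decomposition of $p$ into $X$-edges and peripheral components. When $g$ is a hyperbolic element, $\phi$ fixes all of $\langle g\rangle$, so the bi-infinite quasigeodesic $\bigcup_{n\in\mathbb{Z}}g^n p$ is coarsely $\bar\phi$-invariant and carries a $\bar\phi$-fixed net, which is used to place the $X$-vertices of $p$ within bounded $\dx$-distance of $\langle g\rangle\subseteq\F$; when $p$ penetrates deeply into a peripheral coset $aH_\lambda$, coarse $\bar\phi$-invariance of $p$ together with almost malnormality of peripheral subgroups and bounded coset penetration forces $\phi$ to stabilize the maximal parabolic $aH_\lambda a^{-1}$, which reduces the analysis near that component to the fixed subgroup of the induced automorphism of $H_\lambda$ and places the entry and exit vertices near $\F$. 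The remaining (parabolic and finite-order) possibilities for $g$ are handled by passing to the relevant peripheral subgroup and by the quasiconvexity of finite subgroups, respectively. Assembling these local estimates along $p$ and taking the worst constant yields a uniform $\sigma$ with every vertex of $p$ in the $\dx$-$\sigma$-neighbourhood of $\F$, which is the required relative quasiconvexity. The technical heart is keeping all these constants uniform over $g\in\F$ and reconciling the relative metric $\dxh$ with the word metric $\dx$ throughout the estimates.
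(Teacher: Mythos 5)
Your Steps 1 and 2 line up with the paper: the fact that $\phi$ respects the NRH peripheral structure is Lemma \ref{fixps}, and the fact that the $\phi$-image of a geodesic between fixed points is a quasigeodesic without backtracking which fellow-travels it (in $\dx$, at phase vertices) is the content of Lemmas \ref{qgimage}, \ref{lem:qg-close} and \ref{BCP}. The gap is Step 3, where all the content of the theorem sits, and neither of the two mechanisms you propose there works as stated. For a hyperbolic $g\in\F$, the geodesic $[1,g]$ does stay close to $\langle g\rangle\subseteq\F$, but only up to the quasiconvexity constant of the cyclic subgroup $\langle g\rangle$, which depends on $g$ (on its translation length and on the quasigeodesic constants of $n\mapsto g^n$); so this cannot produce one $\sigma$ valid for all $g\in\F$, and no case analysis on the dynamical type of $g$ obviously repairs this. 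You flag the uniformity problem as ``the technical heart'' but offer no mechanism for it. The paper's mechanism is entirely different and never looks at individual elements: it first proves that $\F$ is boundedly generated with respect to $\dxh$ (Lemma \ref{lem:bound_gen}) and then runs projection arguments from those bounded generators onto an arbitrary geodesic between fixed points (Lemmas \ref{proj}, \ref{lem:pt_close_to_sides->close_to_image}, \ref{lem:qc-aux}).

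Your peripheral case has a second, independent gap: deep penetration of $p$ into a coset $aH_\lambda$ does not force $\phi$ to stabilize $aH_\lambda a^{-1}$. What BCP gives is that a $\dx$-long component $e$ of $p$ is connected to \emph{some} component of $\phi(p)$; that component is the companion $f_\phi$ of some component $f$ of $p$, but possibly with $f\ne e$. Only when $e$ is connected to its \emph{own} companion do you get invariance of the parabolic and closeness of $e_\pm$ to $\F$ (this is Lemma \ref{lem:conn_to_comp->close_to_fix}, via part (b) of Lemma \ref{BCP} and Neumann's Lemma \ref{lem:N-3}). Handling the other possibility --- long components connected to companions of other components --- is exactly what the paper's notions of $E$-fine geodesics, large central components, and the ``cascade'' estimate are built for (Lemma \ref{cascades}, Corollary \ref{cor:loc_fin}, Lemmas \ref{hren}, \ref{proj}, \ref{lem:fp_close_to_fine}): along a fine geodesic the $\dx$-lengths of components are bounded in terms of the distance travelled, so the set of fine initial segments is finite, and every sufficiently long fine geodesic between fixed points contains a vertex $u$ with $\dx(u,\phi(u))$, hence $\dx(u,\F)$, bounded. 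Your outline contains nothing playing this role, so the concluding ``assemble the local estimates'' step cannot close; note also that reducing to the fixed subgroup of the induced automorphism of $H_\lambda$ does not by itself place the entry and exit vertices of a component near $\F$ (that fixed subgroup may well be trivial), whereas those vertices are precisely what must be controlled.
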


Since the intersection of any two relatively quasiconvex subgroups is relatively
quasiconvex \cite[Prop. 4.18]{Osi06},
the same result holds for any finite collection of automorphisms.

In order to prove Theorem \ref{thm:main}
we first show that any automorphism $\phi$ of a group $G$, that is
hyperbolic relative to a collection of NRH subgroups, \textit{respects the peripheral structure}
(see Definition \ref{df:resp_per_str}). Although this observation is quite elementary,
it plays an important role
in our paper. It allows us to conclude that $\phi$ induces a quasiisometry  of the relative Cayley
graph and to use the geometric machinery of relatively
hyperbolic groups partially developed in \cite{Osi06}. In fact, our proof of Theorem \ref{thm:main} would
remain valid if instead of requiring the peripheral subgroups of $G$ to be NRH one demanded the
automorphism $\phi$ to respect some peripheral structure on $G$.
It would be interesting to see whether the conclusion of Theorem \ref{thm:main} holds in general,
without any of these two requirements.

Note that relative quasiconvexity of a subgroup is independent of the choice of the finite generating set
for the group $G$ \cite[Prop. 4.10]{Osi06}, but may, in general, depend on the selection of
the family of peripheral subgroups. Nevertheless, relatively quasiconvex subgroups are well-behaved and
have many good properties. For instance, C. Hruska \cite{Hr} proved that relatively quasiconvex
subgroups of relatively hyperbolic groups are themselves relatively hyperbolic
with a natural induced peripheral structure. This allows us to obtain some results
about the algebraic structure of fixed subgroups.

\begin{cor}\label{cor1}
Assume that $G$ is a finitely generated group which is hyperbolic relative to a family of NRH subgroups
and $\phi \in \A$. Let $\mathcal P$ be the set of all conjugates of peripheral
subgroups of $G$ and let
$$\mathcal P ^\phi =\{ P \cap \F \mid P\in \mathcal P\;{\rm and}\; |P\cap \F|=\infty \}.$$
Then the action of $\F $ on $\mathcal P ^\phi $ by conjugation has finitely many orbits
and $\F $ is hyperbolic relative to representatives of these orbits.
In particular, if $\mathcal P^\phi$ is empty, then $\F $ is finitely generated and word hyperbolic.
\end{cor}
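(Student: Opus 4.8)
The plan is to deduce the corollary from Theorem~\ref{thm:main} together with Hruska's structure theorem for relatively quasiconvex subgroups \cite{Hr}. Write $H:=\F$. By Theorem~\ref{thm:main}, $H$ is relatively quasiconvex in $G$ with respect to its (NRH) peripheral structure. Hruska's theorem then tells us that $H$ is itself relatively hyperbolic and, more precisely, that $H$ is hyperbolic relative to a \emph{finite} set of representatives of the $H$-conjugacy classes of subgroups of the form $H\cap P$, where $P$ ranges over the conjugates of the peripheral subgroups of $G$ and $H\cap P$ is infinite. By construction this collection of subgroups, taken up to $H$-conjugacy, is exactly $\mathcal P^\phi$; and the conjugation action of $H$ on $\mathcal P^\phi$ is well defined because for $h\in H$ one has $h(P\cap H)h^{-1}=(hPh^{-1})\cap H$ with $hPh^{-1}\in\mathcal P$. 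So the first two assertions of the corollary are immediate, and for the ``in particular'' statement: if $\mathcal P^\phi=\varnothing$ then $H$ is hyperbolic relative to the empty family, which in the framework of \cite{Osi06} means precisely that $H$ is finitely generated and word hyperbolic.

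It is also worth recording the following point, which clarifies the peripheral subgroups appearing above, although it is not strictly needed for the argument: since $\phi$ respects the peripheral structure it permutes $\mathcal P$, and since distinct maximal parabolic subgroups of a relatively hyperbolic group meet in finite subgroups, the condition that $H\cap P$ be infinite forces $\phi(P)=P$. Hence each nontrivial member of $\mathcal P^\phi$ has the form ${\rm Fix}(\phi|_P)$ for the automorphism $\phi|_P$ induced by $\phi$ on the maximal parabolic subgroup $P$.

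I expect no genuine obstacle here: Theorem~\ref{thm:main} supplies relative quasiconvexity of $\F$, and Hruska's theorem does the rest. The only points requiring care are bookkeeping ones — checking that Hruska's induced peripheral system coincides with $\mathcal P^\phi$ modulo $H$-conjugacy, that the conjugation action of $\F$ on $\mathcal P^\phi$ is well defined, and that ``hyperbolic relative to $\varnothing$'' is being read as ``finitely generated and word hyperbolic'' in the conventions used in this paper. None of these should present any real difficulty.
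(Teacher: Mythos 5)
Your argument is correct and follows the same route as the paper: Theorem~\ref{thm:main} gives relative quasiconvexity of $\F$, and then Lemma~\ref{hr} (Hruska's theorem, stated in the paper exactly in the form you invoke) immediately yields the finitely many conjugacy orbits, the relative hyperbolicity with respect to their representatives, and the word-hyperbolicity (hence finite generation) of $\F$ when $\mathcal P^\phi=\varnothing$. Your additional observation that infiniteness of $P\cap\F$ forces $\phi(P)=P$, so that the induced peripheral subgroups are of the form ${\rm Fix}(\phi|_P)$, is also correct (it is essentially the argument the paper uses later for Corollary~\ref{cor:pass_to_rel}), but, as you note, it is not needed here.
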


Recall that a group $G$ is called {\it slender} if every subgroup of $G$ is finitely generated and $G$ is called {\it coherent} if every finitely generated subgroup of $G$ is finitely presented. Since a group hyperbolic relative to a finite family of finitely generated (respectively, finitely presented) peripheral subgroups is itself finitely generated (respectively, finitely presented), the next result easily follows from Corollary \ref{cor1}.

\begin{cor}\label{cor:all_fg}
If a finitely generated group $G$ is hyperbolic relative to slender subgroups, then for every $\phi \in Aut (G)$, $\F $ is finitely generated. If, in addition, all peripheral subgroups of $G$ are coherent, then $\F$ is finitely presented. In particular, the latter conclusion holds for finitely generated relatively hyperbolic groups with virtually polycyclic peripheral subgroups.
\end{cor}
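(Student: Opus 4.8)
The plan is to deduce the corollary from Corollary \ref{cor1}, combined with the fact recalled just before the statement that a group hyperbolic relative to a finite family of finitely generated (respectively, finitely presented) subgroups is itself finitely generated (respectively, finitely presented). The one point needing care is that Corollary \ref{cor1} is stated for NRH peripheral subgroups, whereas here they are merely assumed slender; so the first step is to refine the given slender peripheral structure on $G$ to an NRH one.

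For this, consider a slender peripheral subgroup $H_i$ of $G$. If $H_i$ is hyperbolic --- in particular if it is finite or virtually cyclic --- it may be deleted from the peripheral structure of $G$ (see the discussion of the refinement process in the Introduction, and \cite[Cor.~1.14]{DS}). If $H_i$ is not hyperbolic, then it is not hyperbolic relative to the empty family; and, since it is not finite, not virtually cyclic, and contains no non-abelian free subgroup (a slender group cannot, as such a subgroup would contain a free group of infinite rank and hence a non-finitely-generated subgroup), it is not hyperbolic relative to any nonempty family of proper subgroups either --- such a structure would be non-elementary, and non-elementary relatively hyperbolic groups contain non-abelian free subgroups (a standard ping-pong argument with independent loxodromic elements; see, e.g., \cite{Osi06}). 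Thus $H_i$ is NRH. Deleting the hyperbolic peripheral subgroups therefore leaves a possibly empty family $\mathcal H'$ of NRH subgroups with $G$ hyperbolic relative to $\mathcal H'$; in the language of the Introduction, the refinement process terminates after a single step when the peripheral subgroups are slender. Every member of $\mathcal H'$ is slender, and is coherent whenever all peripheral subgroups of $G$ are coherent.

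Now apply Corollary \ref{cor1} to $G$ with the NRH peripheral structure $\mathcal H'$ and the automorphism $\phi$. Put $\mathcal P$ equal to the set of conjugates of members of $\mathcal H'$ and $\mathcal P^\phi=\{P\cap \F\mid P\in\mathcal P,\ |P\cap \F|=\infty\}$. By the corollary, $\F$ is hyperbolic relative to a finite set of representatives of the orbits of the conjugation action of $\F$ on $\mathcal P^\phi$, and each representative is of the form $P\cap \F$ with $P$ a conjugate of some $H\in\mathcal H'$. Since slenderness passes to conjugates and to subgroups, $P\cap \F$ is finitely generated; hence $\F$ is hyperbolic relative to finitely many finitely generated subgroups, so $\F$ is finitely generated. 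If, moreover, every peripheral subgroup of $G$ is coherent, then each such $P$ is slender and coherent, so every subgroup of $P$ --- in particular $P\cap \F$ --- is finitely presented, and therefore $\F$ is hyperbolic relative to finitely many finitely presented subgroups, hence finitely presented.

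Finally, a virtually polycyclic group is both slender and coherent, since each of its subgroups is again virtually polycyclic, hence finitely generated and finitely presented; so the last assertion is the special case of the above in which the peripheral subgroups of $G$ are virtually polycyclic. The only non-routine step is the first one, the upgrade from a slender to an NRH peripheral structure; it is the place where the dichotomy ``a relatively hyperbolic group is either elementary or contains a non-abelian free subgroup'' is invoked, and it is exactly what allows the hypothesis ``slender'' in place of ``NRH''. (For virtually polycyclic peripheral subgroups this step is immediate, since an infinite virtually polycyclic group which is not virtually cyclic is amenable, hence neither hyperbolic nor hyperbolic relative to a nonempty family of proper subgroups, i.e.\ already NRH.)
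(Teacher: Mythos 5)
Your proposal is correct and follows essentially the same route as the paper: both reduce to an NRH peripheral structure by observing that a slender peripheral subgroup is either elementary (hence removable from the peripheral collection via \cite[Theorem 2.40]{Osi06} or \cite[Cor. 1.14]{DS}) or NRH, using the dichotomy that a non-virtually cyclic group hyperbolic relative to proper subgroups contains a non-abelian free subgroup, and then apply Corollary \ref{cor1} together with Remark \ref{rem:parab_fg->gp_fg}. The only cosmetic difference is that you phrase the elementary case as ``hyperbolic'' rather than ``virtually cyclic'' (equivalent for slender groups), so no further comment is needed.
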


One particular application of the above corollary shows that for any automorphism $\phi$ of a limit group $G$,
$\F$ is finitely generated, and thus is a limit group itself. Indeed, Dahmani \cite{Dahmani} and, independently, Alibegovi\'c \cite{Alib} proved that any limit group is hyperbolic relative to the
collection of representatives of conjugacy classes of maximal abelian non-cyclic subgroups.

Neumann's original motivation for showing that $\F$ is quasiconvex when $G$ is a hyperbolic group \cite{Neumann} was the result of S. Gersten and H. Short \cite[Thm. 2.2]{G-S}
that, for a regular language $\mathcal L$ on a group $G$, a subgroup $H \le G$ is $\mathcal L$-rational if and only if $H$ is $\mathcal L$-quasiconvex. This result can be combined with
Gromov's theorem  \cite[8.5]{Gro}, claiming that in any hyperbolic group the set of all geodesic words forms a regular language ${\mathcal L}_{geod}$, to conclude that $\F$ is rational
with respect to ${\mathcal L}_{geod}$. It is also known that rationality and quasiconvexity of a subset of a hyperbolic group are independent of the choice of an automatic structure on it
\cite{N-S}. 

In a relatively hyperbolic group the situation is more complicated. For example, let $G=\langle a,b,t \, \| \, ab=ba\rangle$ be  the free product of a free abelian group of rank $2$
with an infinite cyclic group, and let $\phi \in Aut(G)$ be the automorphism interchanging $a$ and $b$ and sending $t$ to $t^{-1}$. Then $G$ is hyperbolic relative to $\langle a,b \rangle$
and $\F=\langle ab \rangle$. It is easy to see that
$\mathcal{L}_0=\{a^m b^n \mid m,n \in \mathbb{Z} \}$ is a regular language on $\langle a,b \rangle$, which can be naturally extended (using normal forms in free products) to an
automatic language $\mathcal{L}_1$ on $G$. However, in this case  $\F$ is not $\mathcal{L}_1$-quasiconvex, and thus $\F$ will not be $\mathcal{L}_1$-rational.

Nonetheless, in the case when all of the peripheral subgroups of a relatively hyperbolic group $G$ are abelian and $\phi \in Aut(G)$, Theorem  \ref{thm:main} can be combined with a result of
D. Rebbechi \cite[Thm. 9.1]{Reb} in order to conclude that $\F$ is biautomatic.

The paper is structured as follows. In Section \ref{sec:prelim} we include the necessary background on relatively hyperbolic groups and relatively quasiconvex subgroups; in Section \ref{sec:tech_lemmas}
we give several auxiliary definitions and prove a number of technical results, which will be employed in our proof of the main result in
Section \ref{sec:main-proof}.

{\bf Acknowledgment}.
The results of this paper were obtained when the second author was stuck in the UK due to the eruption of the Eyjafjallaj\"okull volcano in April 2010. We would like to thank Eyjafjallaj\"okull for providing us with a great opportunity to work together. We also thank the anonymous referee for helpful remarks and suggestions.


\section{Preliminaries}\label{sec:prelim}


\noindent{\bf Notation.} Given a group $G$ generated by a subset $S\subseteq G$, we denote by $\Gamma (G,S) $ the Cayley graph of $G$ with respect to $S$ and by $|g|_S$ the word length of an element $g\in G$. We always assume that generating sets are symmetrized, i.e., $S^{-1}=S$. If $p$ is a (simplicial) path in $\Gamma (G,S)$,
$\Lab (p)$ denotes its label, $l(p)$ denotes its length, $p_-$ and $p_+$ denote its starting and ending vertex
respectively. The notation $p^{-1}$ will be used for the path in $\Gamma (G,S) $ obtained by traversing
$p$ backwards. For a word $W$, written in the alphabet $S$, $\|W\|$ will denote its length.
For two  words $U$ and $V$ we shall write $U \equiv V$ to denote the letter-by-letter equality between them.

\paragraph{\bf Relatively hyperbolic groups.} In this paper we use
the notion of relative hyperbolicity which is sometimes called
strong relative hyperbolicity and goes back to Gromov \cite{Gro}.
There are many equivalent definitions of (strongly) relatively
hyperbolic group. We briefly recall one of them and refer the reader to \cite{Bow,DS,Farb,Hr,Osi06} for details.

Let $G$ be a group, let $\Hl $ be a collection of pairwise distinct subgroups of $G$,
and let $X$ be a subset of $G$. We say that $X$ is a {\it relative generating set of
$G$ with respect to $\Hl $} if $G$ is generated by $X$ together with
the union of all $H_\lambda $. (In what follows we always assume $X$
to be symmetric, i.e., $X=X^{-1}$.)
In this situation the group $G$ can be regarded as
a quotient group of the free product
\begin{equation}
F=\left( \ast _{\lambda\in \Lambda } H_\lambda  \right) \ast F(X),
\label{F}
\end{equation}
where $F(X)$ is the free group with the basis $X$. If the kernel of
the natural homomorphism $F\to G$ is the normal closure of a subset
$\mathcal R$ in the group $F$, we say that $G$ has {\it relative
presentation}
\begin{equation}\label{G}
\langle X,\; H_\lambda, \lambda\in \Lambda \; | \; \mathcal R
\rangle .
\end{equation}
If $|X|<\infty $ and $|\mathcal R|<\infty $, the
relative presentation (\ref{G}) is said to be {\it finite} and the
group $G$ is said to be {\it finitely presented relative to the
collection of subgroups $\Hl $.}

Set
\begin{equation}\label{H}
\mathcal H=\bigsqcup\limits_{\lambda\in \Lambda} (H_\lambda
\setminus \{ 1\} ) .
\end{equation}
Given a word $W$ in the alphabet $X\cup \mathcal H$ such that $W$
represents $1$ in $G$, there exists an expression
\begin{equation}
W\stackrel{F}{=} \prod\limits_{i=1}^k f_i^{-1}R_i^{\pm 1}f_i \label{prod}
\end{equation}
with the equality in the group $F$, where $R_i\in \mathcal R$ and
$f_i\in F $ for $i=1, \ldots , k$. The smallest possible number
$k$ in a representation of the form (\ref{prod}) is called the
{\it relative area} of $W$ and is denoted by $Area^{rel}(W)$.

\begin{defn}[\bf Relatively hyperbolic groups]\label{def:rel_hyp_gp}
A group $G$ is {\it hyperbolic relative to a collection of
subgroups} $\Hl $, called {\it peripheral subgroups}, if $G$ is finitely presented relative to $\Hl $
and there is a constant $C>0$ such that for any word $W$ in $X\cup
\mathcal H$ representing the identity in $G$, we have
\begin{equation}\label{isop}
Area^{rel}
(W)\le C\| W\| .
\end{equation}
\end{defn}

This definition is independent of the choice of the finite
generating set $X$ and the finite set $\mathcal R$ in (\ref{G})
(see \cite{Osi06}). In particular, $G$ is an ordinary (Gromov) {\it hyperbolic group} if $G$ is hyperbolic relative to the {empty family of peripheral subgroups}.

\begin{rem}\label{rem:parab_fg->gp_fg}
Note that, by definition,  if $|\Lambda|<\infty$ and each subgroup $H_\lambda$, $\lambda \in \Lambda$,
is finitely generated [finitely presented], then $G$ is also finitely generated [resp., finitely presented].
\end{rem}

Let $G$ be a group generated by a finite set $X \subset G$ and let $H \le G$ be a subgroup generated by a
finite set $A \subset H$. Recall, that $H$ is said to be \textit{undistorted} in $G$, if there exists $C >0$
such that for every $h \in H$ one has $|h|_A \le C |h|_X$.

In general, the relatively hyperbolic group $G$ does not have to be finitely generated, and the collection of
peripheral subgroups $\Hl$ could be infinite. However, the second author proved the following:

\begin{lem}[\cite{Osi06}, Thm. 1.1 and Lemma 5.4]\label{finLambda}
Let $G$ be a finitely generated relatively hyperbolic group. Then the collection of
peripheral subgroups is finite, every peripheral subgroup is finitely generated and undistorted in $G$.
\end{lem}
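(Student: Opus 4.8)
The plan is to read off all three conclusions from the definition of relative hyperbolicity, invoking the linear relative isoperimetric inequality only for the last two. First a normalisation: since $G$ is finitely generated we may enlarge the finite relative generating set $X$ by a finite generating set of $G$ — adding the finitely many relators that express the new symbols as words in $X\cup\mathcal H$ keeps the relative presentation finite and, by the independence statement following Definition~\ref{def:rel_hyp_gp}, preserves a linear isoperimetric inequality $Area^{rel}(W)\le C\|W\|$ — so that henceforth $X$ itself generates $G$ as a group. We may also assume every $H_\lambda\neq\{1\}$, a trivial peripheral subgroup being discardable. As $\mathcal R$ is finite, only finitely many indices, say those in a set $\Lambda_0\subseteq\Lambda$, have a letter of $H_\lambda\setminus\{1\}$ occurring in some $R\in\mathcal R$.

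\emph{Step 1 ($\Lambda$ is finite).} I claim $\Lambda=\Lambda_0$. If $\mu\in\Lambda\setminus\Lambda_0$, pick $1\neq h\in H_\mu$; since $X$ generates $G$ there is a word $U$ in $X$ alone representing $h$, so $W\equiv U\,\underline{h}^{-1}$, with $\underline{h}\in H_\mu\setminus\{1\}$ the letter equal to $h$, represents $1$ in $G$ and hence equals $\prod_{i=1}^{k}f_i^{-1}R_i^{\pm1}f_i$ in $F$ with $R_i\in\mathcal R$. Applying the retraction $\rho\colon F\to H_\mu$ that is the identity on the $H_\mu$-free factor and trivial on every other free factor (including $F(X)$) sends the left side to $1\cdot h^{-1}$ and the right side to $1$, since each $\rho(R_i)=1$ as $\mu\notin\Lambda_0$; thus $h=1$, a contradiction. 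Note this argument uses only that $\ker(F\to G)$ is the normal closure of the finite set $\mathcal R$, not the isoperimetric inequality.

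\emph{Step 2 (finite generation and undistortedness of $H_\lambda$).} Fix $\lambda\in\Lambda$ and let $A_\lambda$ be a fixed finite subset of $H_\lambda$ determined by $\mathcal R$ and $X$ — say, the elements of $H_\lambda$ representable by subwords of relators of $\mathcal R$, together with all $h\in H_\lambda$ of $X$-length at most $\max_{R\in\mathcal R}\|R\|$; this is finite because $X$ and $\mathcal R$ are. I claim $H_\lambda=\langle A_\lambda\rangle$ and $|h|_{A_\lambda}\le C'|h|_X$ for all $h\in H_\lambda$, with $C'$ depending only on $C$ and $\mathcal R$, which yields finite generation and, word metrics of a finitely generated group for different finite generating sets being bi-Lipschitz equivalent, undistortedness at once. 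To prove the claim, given $1\neq h\in H_\lambda$ put $n:=|h|_X$, let $U$ be a geodesic $X$-word for $h$, and take a relative van Kampen diagram $\Delta$ over $\langle X,\mathcal H\mid\mathcal R\rangle$ with boundary label $W\equiv U\,\underline{h}^{-1}$ and at most $C(n+1)$ cells of $\mathcal R$-type, the cells of $H_\mu$-type being labelled by words over $H_\mu\setminus\{1\}$ trivial in $H_\mu$ and unrestricted in perimeter and number. The objective is to extract from $\Delta$ a factorisation $h=h_1\cdots h_m$ with each $h_j\in A_\lambda$ and $m\le C'(n+1)$: the unique $H_\lambda$-edge of $\partial\Delta$ carrying $\underline{h}$ is ``filled'' by a configuration of $H_\lambda$-cells and $\mathcal R$-cells whose combinatorial complexity, after passing to a reduced diagram, is controlled by the number of $\mathcal R$-cells. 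Such a factorisation gives $h\in\langle A_\lambda\rangle$ and $|h|_{A_\lambda}\le m\le C'(n+1)\le C''|h|_X$, using $n\ge1$, as required.

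\emph{Main obstacle.} Step 1 is essentially free; the real work is the diagram analysis in Step 2 — making precise and quantitative the claim that the lone boundary $H_\lambda$-edge of a minimal, or reduced, relative van Kampen diagram is spanned by a controlled pattern of $H_\lambda$-cells and $\mathcal R$-cells, so that a short factorisation of $h$ over the fixed finite set $A_\lambda$ results. This requires reducing to diagrams in which adjacent $H_\lambda$-cells are amalgamated and no $\mathcal R$-cell is cancellable, so that the $H_\lambda$-subcomplex meeting the special edge is forest-like with bounded branching, together with a careful accounting of the $H_\lambda$-letters contributed by $\mathcal R$-cells into both $A_\lambda$ and the length $m$ — this is where the case $\lambda\in\Lambda_0$ is felt. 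This is essentially \cite[Lemma 5.4]{Osi06}, and the linear isoperimetric inequality enters exactly here, through the bound $C(n+1)$ on the number of $\mathcal R$-cells; this is the step I expect to be delicate.
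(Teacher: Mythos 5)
The paper does not prove Lemma \ref{finLambda} at all: it is imported verbatim from \cite{Osi06} (Theorem 1.1 and Lemma 5.4), so there is no internal argument to compare yours with, and your proposal has to be judged as a free-standing reconstruction of Osin's proof. On that reading, your Step 1 is correct and complete: the normalisation making $X$ a genuine generating set of $G$ is legitimate (it is covered by the independence statement after Definition \ref{def:rel_hyp_gp}), and the retraction of $F$ onto the free factor $H_\mu$ kills every relator containing no $H_\mu$-letter, so every nontrivial $H_\mu$ must contribute a letter to some element of the finite set $\mathcal R$; hence $\Lambda$ is finite. This is the standard argument and, as you note, it uses only that $\ker(F\to G)$ is the normal closure of the finite set $\mathcal R$.

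Step 2, however, is not a proof as written, and this is a genuine gap. The load-bearing claim --- that in a reduced relative van Kampen diagram with boundary label $U\,\underline{h}^{-1}$ the special $H_\lambda$-edge is spanned by a configuration whose complexity is controlled by the number of $\mathcal R$-cells, so that $h$ factors over a fixed finite subset of $H_\lambda$ with at most $C'(\|U\|+1)$ factors --- is precisely the content of \cite[Lemma 5.4]{Osi06}, and you defer it rather than prove it; quoting that lemma here would be circular, since it is half of the statement under proof. The gap is closable, and in fact more directly than your ``main obstacle'' paragraph suggests: amalgamate adjacent $H_\lambda$-cells; since the alphabets $H_\mu\setminus\{1\}$ are pairwise disjoint and every boundary letter of the diagram other than $\underline{h}$ lies in $X$, every edge of the unique cell containing the special edge, apart from that edge itself, must lie on an $\mathcal R$-cell (it cannot lie on another $H_\lambda$-cell after amalgamation, on an $H_\mu$-cell with $\mu\neq\lambda$, or on $\partial\Delta$). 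Reading around that cell expresses $h$ as a product of at most $C(\|U\|+1)\cdot\max_{R\in\mathcal R}\|R\|$ letters of $H_\lambda$ occurring in relators; this single finite set $\Omega_\lambda$ already gives both finite generation and the linear bound $|h|_{\Omega_\lambda}\le C''|h|_X$ (your extra elements of bounded $X$-length are unnecessary). What remains delicate is only the standard diagram bookkeeping --- $0$-refinement, amalgamation without creating annular regions, and the degenerate case where the special edge lies directly on an $\mathcal R$-cell --- which is exactly the work carried out in \cite{Osi06}; until that is written out (or the statement is simply cited, as the paper does), the proposal does not yet establish the lemma.
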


\begin{lem}[\cite{Osi06}, Thm. 1.4]\label{maln}
Let $G$ be a group hyperbolic relative to a collection of subgroups $\Hl $. Then the following conditions hold.
\begin{enumerate}
\item For every $\lambda, \mu \in \Lambda $, $\lambda \ne \mu $, and every $g\in G$, we have $|H_\lambda \cap H_\mu^g |<\infty $.

\item For every $\lambda \in \Lambda $ and $g\in G\setminus H_\lambda $, we have $|H_\lambda \cap H_\lambda ^g|<\infty $.
\end{enumerate}
\end{lem}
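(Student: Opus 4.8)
The plan is to argue by contradiction in both parts, converting the assumption of an infinite intersection into an infinite family of uniformly bounded cycles in the relative Cayley graph $\G$ and then applying the linear relative isoperimetric inequality. To set up the tool, fix a finite relative presentation $\langle X, H_\lambda, \lambda\in\Lambda\mid\mathcal R\rangle$ of $G$ as in Definition~\ref{def:rel_hyp_gp}. From the inequality (\ref{isop}) one extracts --- this is the main geometric input, and is part of the machinery of \cite{Osi06} --- a finite subset $\Omega\subseteq\bigcup_\lambda H_\lambda$ together with a constant $\e>0$ such that the following holds: whenever $c$ is a cycle in $\G$ and $p_1,\dots,p_k$ are \emph{isolated} $\mathcal H$-components of $c$ (no $p_i$ connected in $\G$ to another component of $c$), each element $g_i\in G$ represented by $\Lab(p_i)$ lies in $\langle\Omega\rangle$ and $\sum_{i=1}^k|g_i|_\Omega\le\e\, l(c)$, where $|\cdot|_\Omega$ denotes word length with respect to $\Omega$. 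Because $\Omega$ is finite, each ball in the metric $|\cdot|_\Omega$ is finite; this is the only finiteness I will use, so the argument is insensitive to torsion in the intersections.

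For part~(2), suppose $g\in G\setminus H_\lambda$ but $H_\lambda\cap H_\lambda^g$ is infinite. Since replacing $g$ by $agb$ with $a,b\in H_\lambda$ only conjugates this intersection and keeps $g$ outside $H_\lambda$, I may assume $g$ is of minimal $\dxh$-length in the double coset $H_\lambda gH_\lambda$; then any geodesic word $W$ over $X\cup\mathcal H$ representing $g$ neither begins nor ends with a letter from $H_\lambda$. For each of the infinitely many distinct nontrivial $h\in H_\lambda\cap gH_\lambda g^{-1}$, write $h=gug^{-1}$ with $u\in H_\lambda$ and form the loop $c_h=a\cdot p\cdot b\cdot r$ in $\G$, where $a$ is the edge labelled $h$ from $1$ to $h$, $p$ is the path labelled $W$ from $h$ to $hg$, $b$ is the edge labelled $u^{-1}$ from $hg$ to $g$ (legitimate since $hgu^{-1}=g$), and $r$ is the path labelled $W^{-1}$ from $g$ back to $1$; so $l(c_h)=2+2\|W\|$. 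As $W$ carries no $H_\lambda$-letter at either end, $a$ and $b$ are genuine single-edge $H_\lambda$-components of $c_h$; $a$ spans the left coset $H_\lambda$ and $b$ spans $gH_\lambda$, distinct because $g\notin H_\lambda$, so $a$ and $b$ are not connected to each other. Finally, every subword of the geodesic $W$ representing an element of $H_\lambda$ has length at most $1$ (being itself geodesic in $\G$), and a short coset computation turns this into the statement that $a$ (and likewise $b$) is not connected to any $H_\lambda$-component sitting inside $p$ or $r$ either. Thus $a$ and $b$ are isolated components of $c_h$, and the tool above gives $|h|_\Omega\le\e(2+2\|W\|)$ for every such $h$; since the bound is independent of $h$, only finitely many of them can exist, a contradiction.

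Part~(1) runs along the same lines and is slightly easier. If $\lambda\ne\mu$ and $H_\lambda\cap H_\mu^g$ is infinite, pass to a minimal-$\dxh$-length representative of the double coset $H_\lambda gH_\mu$ (this does not change the intersection up to conjugacy), so a geodesic word $W$ representing $g$ begins with no $H_\lambda$-letter and ends with no $H_\mu$-letter; for each of the infinitely many nontrivial $h\in H_\lambda\cap gH_\mu g^{-1}$, written $h=gug^{-1}$ with $u\in H_\mu$, build the loop $c_h=a\cdot p\cdot b\cdot r$ exactly as before, now with $a$ an $H_\lambda$-edge and $b$ an $H_\mu$-edge. This time $a$ and $b$ are automatically unconnected because they have different types, and geodesicity of $W$ again forbids any connection of $a$ (resp.\ $b$) to an $H_\lambda$- (resp.\ $H_\mu$-) component inside $p$ or $r$; hence $a$ is isolated, $|h|_\Omega\le\e(2+2\|W\|)$ for all such $h$, and again this is impossible. (The degenerate case $g\in H_\lambda$ is contained in this: the minimal representative is then $1$, $W$ is empty, and $c_h$ collapses to the length-$2$ loop formed by the two distinct letters of $\mathcal H$ representing a common element of $H_\lambda\cap H_\mu$ --- one in $H_\lambda\setminus\{1\}$, the other in $H_\mu\setminus\{1\}$, distinct since $\mathcal H$ is a disjoint union.)

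The main obstacle is the single step invoked in the first paragraph: extracting from the linear relative isoperimetric inequality (\ref{isop}) the fact that a family of isolated $\mathcal H$-components of a cycle has total $\Omega$-length bounded linearly by the length of the cycle. This is precisely the place where genuine relative hyperbolicity enters (finiteness of the relative presentation alone would not suffice), and it is the technical heart carried over from \cite{Osi06}. Everything downstream --- the reduction to a minimal double-coset representative so that the distinguished $\mathcal H$-edges are actual components, the coset bookkeeping showing they are isolated, and the finiteness of $\Omega$-balls --- is routine; the hypotheses $\lambda\ne\mu$ and $g\notin H_\lambda$ serve only to guarantee that the two distinguished edges are not connected to each other.
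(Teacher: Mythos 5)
This lemma is quoted in the paper from \cite{Osi06} (Thm.~1.4) without proof, and your argument is essentially the proof given there: pass to a shortest representative of the relevant double coset, encode each element $h$ of the (supposedly infinite) intersection as a cycle of length $2\|W\|+2$ in which the edge labelled $h$ is an isolated $\mathcal H$-component, and invoke the result of \cite{Osi06} (Lemma 2.27 there) providing a finite set $\Omega$ with a linear bound on the total $\Omega$-length of isolated components of a cycle, so that finiteness of $\Omega$-balls forces finiteness of the intersection. Your write-up is correct (the double-coset minimality and geodesicity checks you sketch do yield isolation of the distinguished edges), with the understood caveat that the $\Omega$-lemma you import unproved is exactly the technical heart of the original argument.
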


\paragraph{\bf Relatively quasiconvex and undistorted subgroups.} The following definition was suggested in \cite{Osi06}.

\begin{defn}[\bf Relatively quasiconvex subgroups]
Let $G$ be a group generated by a finite set $X$ and hyperbolic relative to a family
of subgroups $\Hl$. A subgroup $H \le G$ is called relatively
quasiconvex with respect to $\Hl$ (or simply relatively quasiconvex when
the collection $\Hl$ is fixed) if there exists a constant $\sigma \ge 0$ such that
the following condition holds. For any $g,h \in H$ and any geodesic path $p$ from $g$ to $h$
in $\G$, each vertex $v$ of $p$ satisfies $\dx(v,H) \le \sigma$.
\end{defn}

We will need two results about relatively quasiconvex subgroups.
The first one is established in \cite[Thm. 9.1]{Hr}.

\begin{lem}\label{hr}
Let $G$ be a finitely generated relatively hyperbolic group, $K$ a relatively quasiconvex subgroup of $G$. Let $\mathcal P$ be the set of all conjugates of peripheral subgroups of $G$ and let $$\mathcal Q =\{ P \cap K \mid P\in \mathcal P\;{\rm and}\; |P\cap K|=\infty \}.$$  Then the action of $K $ on $\mathcal Q $ by conjugation has finitely many orbits and $K $ is hyperbolic relative to representatives of these orbits.
\end{lem}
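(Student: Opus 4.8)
The plan is to carry out the entire argument inside a single Gromov-hyperbolic space on which $G$ acts, and to read off the peripheral structure of $K$ from its geometry (note that neither $K$ nor the subgroups produced below need be finitely generated as abstract groups, so ``hyperbolic relative to'' is meant throughout in the sense of the definition above). Fix a finite generating set $X$ of $G$ and let $\mathcal{X}=\mathcal{X}(G,\Hl)$ be the associated cusped space, i.e.\ the Cayley graph $\Gamma(G,X)$ with a combinatorial horoball glued along each left coset $gH_\lambda$. Since $G$ is finitely generated and hyperbolic relative to $\Hl$, the space $\mathcal{X}$ is a proper geodesic $\delta$-hyperbolic space on which $G$ acts properly; the glued horoballs form a $G$-invariant family of quasiconvex ``cusp'' subspaces, and each $P=gH_\lambda g^{-1}\in\mathcal{P}$ is the $G$-stabilizer of exactly one of them, denoted $\mathcal{B}_P$. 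The first step is to restate relative quasiconvexity of $K$ geometrically: it is equivalent (see \cite{Hr}) to the condition that geodesics of $\mathcal{X}$ joining two points of the orbit $K\cdot 1$ stay in a bounded neighbourhood of $K\cdot 1$. Consequently the quasiconvex hull $\mathcal{Y}$ of $K\cdot 1$ in $\mathcal{X}$ is a $K$-invariant quasiconvex subspace on which $K$ acts properly, and, using the fineness of $\G$, one checks that $K$ acts cocompactly on the subspace obtained from $\mathcal{Y}$ by deleting the interiors of the horoballs.

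From this cocompactness the first assertion of the lemma is immediate. There are only finitely many $K$-orbits of horoballs $\mathcal{B}$ with $\mathcal{B}\cap\mathcal{Y}$ unbounded; and for $P\in\mathcal{P}$ one has that $\mathcal{B}_P\cap\mathcal{Y}$ is unbounded precisely when $|P\cap K|=\infty$ --- if an element of $P\cap K$ corresponds to a deep element of the peripheral subgroup, a geodesic from $1$ to it plunges far into $\mathcal{B}_P$ and, by relative quasiconvexity, drags $\mathcal{Y}$ along with it, while a finite stabilizer is incompatible with $\mathcal{Y}$ reaching unboundedly far into a horoball --- in which case $\mathrm{Stab}_K(\mathcal{B}_P)=P\cap K$. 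Since conjugation by $K$ permutes the horoballs $\mathcal{B}_P$ exactly as it permutes the subgroups $P\cap K$, the set $\mathcal{Q}$ breaks into finitely many $K$-orbits; fix representatives $Q_1,\dots,Q_r$. Each $Q_i$ is relatively quasiconvex in $G$ (being the intersection of $K$ with a conjugate of a peripheral subgroup, both of which are relatively quasiconvex, cf.\ \cite[Prop. 4.18]{Osi06}), and the family $\{Q_1,\dots,Q_r\}$ is almost malnormal in $K$ because $\mathcal{P}$ is almost malnormal in $G$ by Lemma \ref{maln} and the $Q_i$ are infinite.

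For the second assertion I would form the cusped space $\mathcal{Z}=\mathcal{Z}(K,\{Q_1,\dots,Q_r\})$, built from a finite relative generating set of $K$ with respect to $\{Q_i\}$ (available from the cocompactness above), and construct a $K$-equivariant map $\Phi\colon\mathcal{Z}\to\mathcal{X}$: on the Cayley-graph part $\Phi$ is induced by the inclusion $K\hookrightarrow G$, and it sends the horoball of $\mathcal{Z}$ over a coset $kQ_i$ into the horoball $\mathcal{B}$ of $\mathcal{X}$ with $\mathrm{Stab}_K(\mathcal{B})=kQ_ik^{-1}$, respecting horoball depth. The crux is to show that $\Phi$ is a quasi-isometric embedding with quasiconvex image. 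Granting this, $\mathcal{Z}$ is quasi-isometric to a quasiconvex subset of the hyperbolic space $\mathcal{X}$ and is therefore itself hyperbolic; since $K$ then acts properly on $\mathcal{Z}$ with the horoballs as its cusps and the horoball over $kQ_i$ having $K$-stabilizer $kQ_ik^{-1}$, one obtains a finite relative presentation of $K$ over $\{Q_1,\dots,Q_r\}$ satisfying a linear relative isoperimetric inequality, which is one of the equivalent formulations of the statement that $K$ is hyperbolic relative to $\{Q_1,\dots,Q_r\}$ (see \cite{Bow,DS,Farb,Hr,Osi06} for the equivalences).

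I expect the main obstacle to be precisely the quasi-isometric-embedding claim for $\Phi$ --- equivalently, obtaining uniform control on how $\mathcal{X}$-geodesics between two elements of $K$ enter and leave the horoballs $\mathcal{B}_P$, and in particular a bound on their depth of penetration into the horoballs with $|P\cap K|$ finite (for a horoball with $|P\cap K|=\infty$ a deep excursion of such a geodesic runs near $P\cap K\subseteq K$ and is harmless). This is the only place where the relative-quasiconvexity constant of $K$ is genuinely used, and making the estimate effective requires the bounded-coset-penetration/fineness properties of relatively hyperbolic groups rather than mere Gromov-hyperbolicity of $\mathcal{X}$; one must also verify that each $Q_i$ is undistorted inside the peripheral subgroup containing it, so that $\Phi$ behaves well on horoballs. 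Should one prefer to argue within the combinatorial framework of \cite{Osi06}, the same difficulty recurs as the task of converting a relative van Kampen diagram for a word $W$ over $G$ --- obtained from the linear relative isoperimetric inequality for $G$ after rewriting $W$ in the generators of $G$ --- into a relative van Kampen diagram for $W$ over $K$ with only $O(\|W\|)$ cells, by pushing its $1$-skeleton into a bounded neighbourhood of $K$.
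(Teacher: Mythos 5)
First, note that the paper does not actually prove Lemma \ref{hr}: it is quoted verbatim from Hruska \cite[Thm.\ 9.1]{Hr}, so what you are really proposing is a proof of Hruska's theorem itself. Your outline (cusped space, quasiconvex hull of the orbit $K\cdot 1$, cocompact action on the truncated hull, horoballs with unbounded intersection corresponding to the infinite intersections $P\cap K$, and a cusped space for $K$ mapped equivariantly into that of $G$) is indeed the shape of the known arguments in this direction, so the strategy is not wrong.

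The problem is that the proof stops exactly where the theorem lives. You write ``the crux is to show that $\Phi$ is a quasi-isometric embedding with quasiconvex image'' and then proceed under ``granting this''; but that claim (equivalently, uniform control of how geodesics of $\mathcal{X}$ between points of $K$ penetrate horoballs, the verification that each $Q_i$ is undistorted in its ambient peripheral subgroup, and the passage from a cusp-uniform action on $\mathcal{Z}$ to a finite relative presentation of $K$ with a linear relative isoperimetric inequality) is the entire content of \cite[Thm.\ 9.1]{Hr}; none of it is supplied, and you yourself flag it as the expected obstacle. Several of the preliminary steps are also asserted rather than proved and already require the quasiconvexity constant in an essential way: that $K$ acts cocompactly on the truncated hull, that $\mathcal{B}_P\cap\mathcal{Y}$ unbounded forces $|P\cap K|=\infty$ (your ``drags $\mathcal{Y}$ along with it'' / ``incompatible with $\mathcal{Y}$ reaching unboundedly far'' is precisely the BCP-type estimate that needs writing), and that a finite relative generating set for $K$ with respect to $\{Q_1,\dots,Q_r\}$ falls out of that cocompactness. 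As it stands the proposal is a correct plan with the central technical step missing, so it cannot be accepted as a proof; for the purposes of this paper the honest move is what the authors do, namely cite \cite[Thm.\ 9.1]{Hr}, or else carry out the deferred penetration/QI-embedding estimates in full.
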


In \cite[Thm. 1.8]{DS} Dru\c tu and Sapir showed that the conclusion of Lemma \ref{hr}
holds for every undistorted subgroup $K\le G$.
Later Hruska proved the following in \cite[Thm. 1.5]{Hr}:

\begin{lem}\label{ds}
Let $G$ be a finitely generated group hyperbolic with respect to a collection of subgroups $\Hl$ and let $K$ be a finitely generated undistorted subgroup of $G$. Then $K$ is relatively quasiconvex. In particular, the conclusion of the previous lemma holds for $K$.
\end{lem}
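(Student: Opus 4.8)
The plan is to reduce Lemma \ref{ds} to the earlier results by showing that a finitely generated undistorted subgroup $K\le G$ is itself relatively hyperbolic with a peripheral structure that is ``compatible'' with that of $G$, so that Lemma \ref{hr} applies. Concretely, I would first invoke Lemma \ref{finLambda} to know that the ambient structure is tame: $\Lambda$ is finite, each $H_\lambda$ is finitely generated and undistorted. The key is to produce, for the finitely generated subgroup $K$, a finite collection $\mathcal Q_0$ of subgroups of $K$ so that $K$ is hyperbolic relative to $\mathcal Q_0$ \emph{and} this structure sits inside $G$'s structure in the sense needed to quote Hruska. The natural candidates for $\mathcal Q_0$ are representatives of the $K$-conjugacy classes of the infinite subgroups $P\cap K$, $P\in\mathcal P$; the Dru\c tu--Sapir theorem \cite[Thm.~1.8]{DS} says precisely that for an undistorted $K$ these form finitely many orbits and that $K$ is relatively hyperbolic with respect to them, which is exactly the ``In particular'' clause, so the only real content is relative quasiconvexity of $K$ in $G$.

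For the quasiconvexity statement itself, I would argue directly from the definition. Fix a finite generating set $A$ of $K$ and a finite generating set $X$ of $G$ with $A\subseteq X$ (harmless); undistortion gives a constant $C$ with $|h|_A\le C|h|_X$ for all $h\in K$. Given $g,h\in K$ and a geodesic $p$ from $g$ to $h$ in $\G$, I want every vertex $v$ of $p$ to satisfy $\dx(v,K)\le\sigma$ for a uniform $\sigma$. The standard device is: a geodesic in the relative Cayley graph $\G$ stays uniformly close (in the $\dx$ metric, hence also in $\dxh$) to any quasigeodesic in $\G$ with the same endpoints — this is the relative analogue of the Morse lemma, available in the relatively hyperbolic setting (it is part of the ``geometric machinery of relatively hyperbolic groups'' of \cite{Osi06}, and also follows from the fact that $\G$ is a hyperbolic space). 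So I would produce a word in $A$ representing $g^{-1}h$ of length $\le C|g^{-1}h|_X\le C\cdot\dxh(g,h)\cdot(\text{const})$ — more carefully, relate $|g^{-1}h|_X$ to $l(p)=\dxh(g,h)$ using that $K$ undistorted in $G$ together with $\dx(g,h)$ being comparable to $\dxh(g,h)$ only up to the peripheral excursions, which is the subtle point — and then the corresponding path through vertices of $K$ is a quasigeodesic in $\G$ from $g$ to $h$, hence stays within bounded $\dx$-distance of $p$; reversing, each vertex of $p$ is within bounded $\dx$-distance of a vertex of that path, i.e.\ of an element of $K$.

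The main obstacle, and the place where care is genuinely needed, is the comparison between the word metric $\dx$ and the relative metric $\dxh$ along $p$. Undistortion of $K$ in $G$ controls $|g^{-1}h|_X$ in terms of $|g^{-1}h|_A$, not in terms of $\dxh(g,h)=l(p)$, and in general $|g^{-1}h|_X$ can be exponentially larger than $\dxh(g,h)$ because a short relative geodesic may traverse long elements of peripheral subgroups. The fix is to use that, by Lemma \ref{finLambda}, the peripheral subgroups are themselves undistorted in $G$, so a relative geodesic $p$ of length $n$ can be subdivided into at most $n$ syllables, each either an $X$-edge or a ``jump'' inside some $H_\lambda$, and then one passes instead to the path that follows $K$ but is allowed to make the same peripheral jumps; the right statement to set up is that $K$ together with the $P\cap K$'s is undistorted in $G$ relative to its structure, i.e.\ one works in $\Gamma(K, A\cup\mathcal Q_0')$ and compares with $\G$. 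So I would phrase the core lemma as: the inclusion $\Gamma(K, A\cup(\bigsqcup_{Q\in\mathcal Q_0}(Q\setminus\{1\})))\hookrightarrow\G$ is a quasiisometric embedding, deduce it from undistortion of $K$ and of the $H_\lambda$'s plus Lemma \ref{maln} (to see the $P\cap K$ hit distinct peripheral cosets properly), and then relative quasiconvexity follows from stability of quasigeodesics in the hyperbolic graph $\G$. The ``In particular'' sentence is then immediate from Lemma \ref{hr} applied to the now-established relatively quasiconvex $K$.
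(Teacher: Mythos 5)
You should first note that the paper does not prove this lemma at all: it is quoted from Hruska \cite[Thm.~1.5]{Hr} (with the ``in particular'' clause also covered by Dru\c tu--Sapir \cite[Thm.~1.8]{DS}), so any self-contained argument must essentially reprove Hruska's theorem. Your proposal does not do this; it reduces the statement to a ``core lemma'' --- that the natural map $\Gamma\bigl(K, A\cup\bigsqcup_{Q\in\mathcal Q_0}(Q\setminus\{1\})\bigr)\to\G$ is a quasi-isometric embedding --- and then asserts that this follows ``from undistortion of $K$ and of the $H_\lambda$'s plus Lemma \ref{maln}''. That assertion is the entire content of the theorem, and the listed ingredients do not yield it. Word-metric undistortion gives $|k|_A\le C|k|_X$ for $k\in K$, but to bound $|k|_{A\cup\mathcal Q_0}$ linearly in $\dxh(1,k)$ you must show that the long (in $\dx$) $H_\lambda$-components of a relative geodesic joining two points of $K$ are absorbed by the intersections $K\cap gH_\lambda g^{-1}$: that $K$ comes uniformly $\dx$-close to both endpoints of each such component and that the corresponding excursion lies in a single peripheral coset of $K$. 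Nothing in Lemma \ref{finLambda}, Lemma \ref{maln} (almost malnormality), or the undistortion hypothesis gives that control; establishing it is precisely where Hruska's proof (and the Dru\c tu--Sapir asymptotic-cone argument) does its work. As written, your argument is circular: for finitely generated $K$ the quasi-isometric embedding of the relative Cayley graph of $K$ is essentially equivalent to the relative quasiconvexity being proved.

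There is also a secondary problem with the concluding step. Even granting the core lemma, ``stability of quasigeodesics in the hyperbolic graph $\G$'' only gives closeness in the metric $\dxh$, whereas the definition of relative quasiconvexity used here demands $\dx(v,K)\le\sigma$; closeness in $\dxh$ is nearly vacuous because $\dxh$ is bounded on peripheral cosets. To upgrade to $\dx$-closeness you need Lemma \ref{lem:qg-close} or Lemma \ref{BCP}, which require the comparison path to be a quasigeodesic \emph{without backtracking} and only control \emph{phase} vertices; so you must construct the path through $K$ (whose ``peripheral'' letters lie in subgroups $K\cap gH_\lambda g^{-1}$ and hence are not single edges of $\G$, but conjugated detours) so that backtracking can be removed while keeping its vertices near $K$, and then check that the vertices of the relative geodesic you care about are matched to phase vertices lying in a bounded $\dx$-neighbourhood of $K$. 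None of this bookkeeping is fatal, but it is absent. The ``in particular'' sentence is indeed immediate (either from Lemma \ref{hr} once quasiconvexity is known, or directly from \cite[Thm.~1.8]{DS}); the gap is the quasiconvexity claim itself.
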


Note that relative quasiconvexity does not, in general, imply that the subgroup is undistorted
(indeed, by definition, any subgroup $K$ of a peripheral subgroup $H$ is relatively quasiconvex; however,
$K$ may be distorted in $H$, and hence in $G$).

\paragraph{\bf Components.} Let $G$ be a group hyperbolic relative to a family of subgroups $\Hl$.
We recall some auxiliary terminology introduced in \cite{Osi06}, which
plays an important role in our paper.

\begin{defn}[\bf Components]
Let $q$ be a path in the Cayley graph $\G $. A (non-trivial)
subpath $p$ of $q$ is called an {\it $H_\lambda $-component} (or simply a {\it component}),
if the label of $p$ is a word in the
alphabet $H_\lambda\setminus \{ 1\} $,  for some $\lambda \in \Lambda$, and
$p$ is not contained in a longer subpath of $q$ with this property.
Two $H_\lambda $-components $p_1, p_2$
of paths $q_1$, $q_2$ (respectively) in $\G $ are called {\it connected} if there exists a
path $c$ in $\G $ that connects some vertex of $p_1$ to some vertex
of $p_2$, and ${\Lab (c)}$ is a word consisting of letters from
$H_\lambda\setminus\{ 1\} $. In algebraic terms this means that all
vertices of $p_1$ and $p_2$ belong to the same coset $gH_\lambda $
for a certain $g\in G$. Note that we can always assume that $c$ has
length at most $1$, as every non-trivial element of $H_\lambda
\setminus\{ 1\} $ is included in the set of generators. A component of a path $p$ is {\it isolated}
if it is not connected with any other component of $p$.
\end{defn}

In what follows, let $G$ be a group hyperbolic relative to a collection of subgroups $\Hl $ and generated by a finite set $X$. Note that $\Lambda $ is finite in this case and every $H_\lambda $ is finitely generated \cite[Theorem 1.1]{Osi06}.

Let $\varkappa \ge 1$ and $c \ge 0$ be real numbers and let $p$ be a path in $\G$. Recall that $p$ is said to be
$(\varkappa,c)$-\textit{quasigeodesic} if for any subpath $q$ of $p$ we have $l(q) \le \varkappa \dxh(q_-,q_+) +c$.
It is not difficult to see that a path that is a concatenation of a geodesic path with a path
of length at most $c$ is $(1,2c)$-quasigeodesic.

Given a path $p$ in $\G $ we denote by $\lx (p)$ the $X$-length of the element represented by the label of $p$;
in other words, $\lx (p)=\dx (p_-, p_+)$.  Recall that a path $p$ in $\G $ is called a {\it path without backtracking} if for any $\lambda\in \Lambda $, every $H_\lambda$--component of $p$ is isolated.
Evidently any geodesic path in $\G$ is without backtracking.
The following is a reformulation of Farb's Bounded Coset Penetration property (cf. \cite{Farb})
in terms of the relative Cayley graph (see \cite[Theorem 3.23]{Osi06}).

\begin{lem}\label{BCP}
For any $\varkappa \ge 1$, $c\ge 0$ and $k \ge 0$,
there exists a constant $\e =\e(\varkappa , c,k) \ge 0$ such that the following
conditions hold. Let $p$, $q$ be $(\varkappa , c)$-quasigeodesics
without backtracking in $\G $ such that $\dx(p_-,q_-) \le k$ and $\dx(p_+,q_+) \le k$.
\begin{enumerate}
\item[(a)] Suppose that for some $\lambda \in \Lambda $, $s$ is an $H_\lambda $-component of $p$
such that $\lx (s) \ge \e$; then there exists an
$H_\lambda$--component $t$ of $q$ such that $t$ is connected to $s$.

\item[(b)] Suppose that for some $\lambda \in \Lambda $, $s$ and $t$ are connected
$H_\lambda $-components of $p$ and $q$ respectively. Then $\dx (s_-,t_-)\le \e $ and $\dx (s_+, t_+)\le \e$.
\end{enumerate}
\end{lem}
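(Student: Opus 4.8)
The plan is to reduce the statement to the study of van Kampen diagrams over the finite relative presentation $\langle X,\,H_\lambda,\ \lambda\in\Lambda\mid\mathcal R\rangle$, using --- as is standard in the theory --- diagrams with two kinds of $2$-cells: \emph{$\mathcal R$-cells}, whose boundary labels are cyclic shifts of words from $\mathcal R^{\pm 1}$, and \emph{$H_\lambda$-cells}, whose boundary labels are arbitrary words over $H_\lambda\setminus\{1\}$ that represent $1$ in $G$. First I would close the two quasigeodesics into a loop: choose geodesics $a$ and $b$ in $\Gamma(G,X)$ from $q_-$ to $p_-$ and from $p_+$ to $q_+$ respectively, each of length at most $k$, so that $o=a\,p\,b\,q^{-1}$ is a closed path in $\G$; then fix a van Kampen diagram $\Delta$ for $o$ that is \emph{minimal} --- with the fewest $\mathcal R$-cells, then the fewest $H_\lambda$-cells, then the fewest cells overall. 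Such a $\Delta$ exists since $\Lab(o)$ represents $1$ in $G$, and the number of its $\mathcal R$-cells equals the relative area of $\Lab(o)$.

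The core of the argument is a structural description of how the $H_\lambda$-components of $\partial\Delta$ are linked \emph{through} $\Delta$. One may assume each component is a single edge labelled by a letter of $\mathcal H$ (replacing a component by the $\mathcal H$-edge joining its endpoints changes none of the hypotheses or conclusions). Given such an edge $s$ on a side of $o$, minimality forces the $H_\lambda$- and $\mathcal R$-cells meeting $s$ into a rigid ``chain'' which, followed far enough, arrives at further $\mathcal H$-edges lying on $\partial\Delta$. The quantitative inputs are: \emph{(i)} every $\mathcal R$-cell has perimeter at most a fixed $M$ --- as $\mathcal R$ and $X$ are finite, every letter in a relator represents an element of uniformly bounded $\dx$-length --- so a sub-diagram of $N$ $\mathcal R$-cells distorts $\dx$-distances by at most $MN$; \emph{(ii)} minimality excludes the obvious reductions (two $H_\lambda$-cells sharing an edge, a reducible pair of $\mathcal R$-cells, an $H_\lambda$-cell absorbable into $\partial\Delta$, and so on); and \emph{(iii)} Lemma~\ref{maln}, which --- because $p$ and $q$ are without backtracking and the correction paths are short --- bounds the number of distinct $\mathcal H$-edges of $\partial\Delta$ inside any one coset $gH_\lambda$ in terms of $\varkappa,c,k$, so a chain cannot ``loop back'' through a single coset. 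Combining (i)--(iii) yields a constant $\e=\e(\varkappa,c,k)$ so that if $\lx(s)\ge\e$ then the chain through $\Delta$ issuing from $s$ cannot be absorbed by nearby $\mathcal R$-cells and must reach an $\mathcal H$-edge on another side of $o$; since $a$ and $b$ lie in $\Gamma(G,X)$ and carry no $\mathcal H$-edges, while $s$ is isolated in $p$, the reached edge lies on $q$ and is connected to $s$, which is (a). For (b), if a component $s$ of $p$ is connected to a component $t$ of $q$, then $s_-,s_+,t_-,t_+$ lie in one coset $gH_\lambda$; the path running backwards along $p$ from $s_-$, then along $a^{-1}$, then forwards along $q$ to $t_-$, closed up by the $\mathcal H$-edge from $t_-$ to $s_-$, bounds a sub-diagram, and the same chain analysis together with Lemma~\ref{maln} bounds $\dx(s_-,t_-)$, and symmetrically $\dx(s_+,t_+)$, by a uniform constant (which, enlarging $\e$ if needed, we may take to be the $\e$ above).

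The step I expect to be the main obstacle is securing the \emph{uniformity} of $\e$. The relative isoperimetric inequality only bounds the number of $\mathcal R$-cells by $C\|\Lab(o)\|$, which grows with the lengths of $p$ and $q$; hence the bound on how far a component can ``drift'' before reconnecting must be extracted from a \emph{local} analysis of $\Delta$ near the component --- leaning on minimality and on Lemma~\ref{maln} to exclude a component being linked through $\Delta$ only to arbitrarily distant $\mathcal H$-edges of its own side --- rather than from any global cell count. Making this chain-of-cells analysis precise is the real content of the lemma. A slicker-looking but far less self-contained route would be to first prove that $\G$ is Gromov hyperbolic and that Farb's coned-off Cayley graph is quasi-isometric to $\G$ by a map respecting peripheral cosets, and then quote Farb's original Bounded Coset Penetration property; but installing that dictionary is itself a substantial chunk of the theory and, in effect, re-derives the estimate above.
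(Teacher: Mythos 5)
The paper does not prove this lemma at all: it is quoted as a known result, namely Osin's reformulation of Farb's Bounded Coset Penetration property (\cite[Theorem 3.23]{Osi06}), so the comparison has to be with the proof given there. Your plan starts from the right machinery --- van Kampen diagrams over the finite relative presentation, with $\mathcal R$-cells and $H_\lambda$-cells --- but the step you yourself single out as the obstacle is exactly the content of the lemma, and your sketch does not close it. In the established argument the uniformity of $\e$ does not come from a local ``chain of cells'' analysis near $s$; it comes from combining two ingredients: (i) a linear estimate (\cite[Lemma 2.27]{Osi06}), proved by the diagram/isoperimetric analysis you describe, saying that the total $\dx$-length of any collection of \emph{isolated} components of a cycle $o$ in $\G$ is at most $K\,l(o)$ with $K$ depending only on the relative presentation; and (ii) the stability of quasigeodesics without backtracking (\cite[Prop. 3.15]{Osi06}, i.e.\ Lemma \ref{lem:qg-close} of this paper), which allows one to cut out of $p$ and $q$, together with short connecting paths, a cycle whose length is bounded in terms of $\varkappa$, $c$, $k$ alone and in which $s$ (for part (a)), respectively the $\mathcal H$-edge joining $s$ to $t$ (for part (b)), is isolated. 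Applying (i) to that short cycle is what produces a constant independent of $l(p)$ and $l(q)$.

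As written, your argument has a genuine gap precisely at this point: in both (a) and (b) the diagram you analyze has perimeter comparable to $l(p)+l(q)$, the relative isoperimetric inequality only bounds the number of $\mathcal R$-cells linearly in that perimeter, and minimality of $\Delta$ together with the heuristic that a chain ``cannot be absorbed by nearby $\mathcal R$-cells'' is not a proof that the drift of $s$ is bounded by a function of $(\varkappa,c,k)$ only --- you acknowledge this, but no mechanism replacing the localization step (ii) is supplied, so the claimed $\e$ is not actually obtained, and in (b) the bound you extract would a priori depend on how far along $p$ and $q$ the components $s$ and $t$ sit. A smaller point: the appeal to Lemma \ref{maln} to bound the number of $\mathcal H$-edges of $\partial\Delta$ in a single coset is both misplaced and unnecessary --- that lemma asserts finiteness of intersections of (conjugates of) peripheral subgroups and gives no count of edges, while the hypothesis that $p$ and $q$ are without backtracking already guarantees each of them has at most one component in any coset $gH_\lambda$, and the paths $a$, $b$ carry no $\mathcal H$-edges at all. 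To make your approach work you would essentially have to reprove Osin's Lemma 2.27 and Proposition 3.15 and then assemble them as above; the alternative route through Farb's coned-off graph that you mention is indeed the other standard option, but either way the key quantitative input is missing from the present sketch.
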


A vertex of a path $p$ in $\G$ is \textit{phase} if it is not
an inner vertex of some component of $p$. Observe that every vertex of a geodesic segment
is phase, because all components consist of single edges.
It is well known that in a hyperbolic group quasigeodesics with same
endpoints are uniformly close to each other. An analogue of this statement for relatively hyperbolic groups
was established in \cite[Prop. 3.15]{Osi06}:

\begin{lem}\label{lem:qg-close} For any $\varkappa \ge 1$, $c \ge 0$ and $k \ge 0$ there exists a
constant $\nu = \nu(\varkappa, c,k) \ge 0$ having the following property. Let $p$ and $q$ be two
$(\varkappa,c)$-quasigeodesic paths in $\G$ such that $\dx(p_-,q_-)\le k$, $\dx(p_+,q_+)\le k$
and $p$ is without backtracking.
Then for any  phase vertex $u$ of $p$ there exists a phase vertex $v$ of $q$ such that $\dx(u,v) \le \nu$.
\end{lem}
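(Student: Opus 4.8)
The statement is the relatively hyperbolic form of the Morse lemma on stability of quasigeodesics, with the extra feature that the conclusion is measured in the word metric $\dx$ rather than in the metric $\dxh$ of $\G$. The plan is to prove it in two stages: first obtain the statement with $\dx$ replaced by $\dxh$, using hyperbolicity of $\G$; then upgrade the $\dxh$-estimate to a $\dx$-estimate \emph{at phase vertices}, using the Bounded Coset Penetration property, Lemma~\ref{BCP}.

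\emph{Stage 1: $\dxh$-fellow-traveling.} The relative Cayley graph $\G$ is a $\delta$-hyperbolic geodesic space for some $\delta=\delta(G)$ --- a standard consequence of $G$ being hyperbolic relative to $\Hl$, which also falls out of the van Kampen diagram analysis indicated below. By stability of quasigeodesics in $\delta$-hyperbolic spaces, any two $(\varkappa,c)$-quasigeodesics of $\G$ whose endpoints are within $\dxh$-distance $k$ lie within Hausdorff distance $M=M(\varkappa,c,k,\delta)$ of each other in $\G$. Since $\dx\ge\dxh$, the hypotheses give $\dxh(p_\pm,q_\pm)\le k$; hence every phase vertex $u$ of $p$ is $\dxh$-distance at most $M$ from some vertex $w$ of $q$. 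If $w$ is not phase it is an interior vertex of an $H_\mu$-component of $q$, joined by a single $\mathcal H$-edge to that component's nearer endpoint; replacing $w$ by that endpoint yields a phase vertex $v$ of $q$ with $\dxh(u,v)\le M+1$.

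\emph{Stage 2: upgrading to $\dx$.} It remains to bound $\dx(u,v)$. Fix a geodesic $\sigma$ of $\G$ from $u$ to $v$; it is without backtracking, all its components are single edges, and it has at most $M+1$ edges. Since $\dx(u,v)$ is at most the sum of the $X$-lengths of the labels of the edges of $\sigma$, it suffices to bound $\lx(e)$ for every $\mathcal H$-edge $e$ of $\sigma$ by one constant $\e_0$ (specified below). Suppose instead that some $\mathcal H$-edge has $\lx>\e_0$, and let $e$ be the first such edge read from $u$; by minimality, $\dx(u,e_-)\le(M+1)(\e_0+1)$. Write $p=p_1p_2$ with $(p_1)_+=u$ and $q=q_1q_2$ with $(q_1)_+=v$, and let $\gamma$ be a $\dx$-geodesic from $p_-$ to $q_-$, so $\gamma$ has at most $k$ edges, all in $X$, and no components. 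The paths $A:=p_1\sigma$ and $B:=\gamma q_1$ both run from $p_-$ to $v$, and each is a $(\varkappa,c')$-quasigeodesic with $c'=c'(\varkappa,c,k,M)$; deleting connected components (which keeps the endpoints and the quasigeodesic constants) we may take $A$ and $B$ without backtracking, and then every component of $B$ arises from a component of $q$. Assume for the moment that the $H_\lambda$-component of $A$ containing $e$ is isolated in $A$, so that it survives the reduction as the single edge $e$, with $\lx(e)>\e_0$. Now take $\e_0$ to be the constant $\e(\varkappa,c',0)$ of Lemma~\ref{BCP}: applied to $A$ and $B$, part~(a) forces $e$ to be connected to a component $t$ of $q$, and part~(b) gives $\dx(e_-,t_-)\le\e_0$. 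Hence $t_-$ is a phase vertex of $q$ with $\dx(u,t_-)\le\dx(u,e_-)+\e_0\le(M+1)(\e_0+1)+\e_0$, and one may take $\nu$ to be this last quantity.

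\emph{The main obstacle.} Stage 1 is routine once hyperbolicity of $\G$ is granted, and Stage 2 is short in the case isolated above, but the genuine difficulty is the remaining case: in reducing $A$ to a path without backtracking, the $H_\lambda$-component containing $e$ may be \emph{absorbed}, which occurs precisely when $e$ lies between two connected components of $A$ --- necessarily between a component $s$ of $p_1$ and a component occurring later, all contained in one peripheral coset $gH_\lambda$. Then BCP is not directly applicable to $e$; instead one exploits that the subpath of $p$ from the phase vertex $s_+$ to $u$ is confined to $gH_\lambda$ up to the bounded length permitted by the quasigeodesic inequality, and applies Lemma~\ref{BCP} anew to the boundedly many long $\mathcal H$-edges of $p$ inside that subpath, each of which --- being an isolated component of the backtracking-free path $p$ --- is pinned near $q$; Lemma~\ref{maln} is used to prevent two distinct cosets from interfering. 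One must also carry out the symmetric argument from the $p_+$-side with $p_2$ and $q_2$, and fix $\e_0$ (hence $\nu$) compatibly with all the constants $\e(\varkappa,\cdot,\cdot)$ that occur. A self-contained alternative --- the route of \cite{Osi06} --- avoids the appeal to hyperbolicity of $\G$ altogether: one builds a van Kampen diagram over the finite relative presentation of $G$ whose boundary is formed by $p$, $q$, and two short paths, and extracts the fellow-traveling estimates from the counts of $\mathcal R$-cells and $\mathcal H$-cells afforded by the linear relative isoperimetric inequality.
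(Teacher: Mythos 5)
The paper offers no proof of Lemma~\ref{lem:qg-close} at all: it is imported verbatim from \cite[Prop.~3.15]{Osi06}, so there is no in-paper argument to match, and your proposal has to stand on its own. Stage~1 (hyperbolicity of $\G$ plus the Morse lemma, then moving to a phase vertex at cost $1$) is fine, and the ``isolated'' branch of Stage~2 is plausible in outline (modulo the unproved but true assertion that endpoints of components of the reduced path $\bar B$ are phase vertices of $q$ --- needed because $q$ is not assumed to be without backtracking, while Lemma~\ref{BCP} requires it). The problem is precisely the ``main obstacle'' you name: you flag it as the genuine difficulty and then do not resolve it, and the patch you sketch does not close.

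Concretely, in the absorption case let $s$ be a component of $p_1$ connected to $e$, lying in the coset $gH_\lambda$. Nothing prevents $s$ and \emph{every} edge of $p$ between $s$ and $u$ from having $X$-length below any fixed threshold; in that scenario there are no ``long $\mathcal H$-edges of $p$ inside that subpath'' for Lemma~\ref{BCP} to pin to $q$, so that half of your patch is vacuous. The other natural move --- applying BCP to the reduced path $\bar A$, whose merged component $\hat e$ runs from (at latest) $s_-$ to $e_+$ --- also fails with your single threshold: you only know $\lx(e)>\e_0$, while your own bound $\dx(u,e_-)\le (M+1)(\e_0+1)$ is of the same order, so $\lx(\hat e)\ge \lx(e)-\dx(s_-,e_-)$ need not exceed the BCP constant $\e(\varkappa,c',0)$, and then nothing forces $q$ to enter the coset $gH_\lambda$ near $u$ at all; the invocation of Lemma~\ref{maln} does not substitute for this. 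Closing the case requires an additional device (for instance a pigeonhole over a tower of thresholds $T_0\ll T_1\ll\dots\ll T_{M+2}$, exploiting that $\sigma$ has at most $M+1$ edges, so that ``short'' and ``long'' edges are separated by a gap dominating all accumulated constants, combined with a genuine dichotomy on whether $p$ has a long component between $s$ and $u$), and then a matching treatment of the further sub-cases you wave at with ``the symmetric argument''. This is essentially why \cite{Osi06} proves Prop.~3.15 by different machinery (polygon/van Kampen-diagram counts of isolated components), the route you mention only in your final sentence; note also that in \cite{Osi06} Prop.~3.15 precedes Theorem~3.23 (the source of Lemma~\ref{BCP}), so if your argument were meant as a self-contained replacement you would need to check you are not inverting a dependency --- within the present paper, where both are quoted, that is only a caution, but the unresolved absorption case is a genuine gap.
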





\section{Technical lemmas}\label{sec:tech_lemmas}

We start with the following definition.

\begin{defn}[\bf Respecting peripheral structure]\label{df:resp_per_str}
Let $G$ be a group hyperbolic relative to a family of peripheral
subgroups $\Hl$ and let $\phi \in \A$. We will say that $\phi$ \textit{respects the peripheral structure}
of $G$ if for every $\lambda \in \Lambda$ there is $\lambda' \in \Lambda$ such that $\phi(H_\lambda)$
is a conjugate of $H_\lambda'$ in $G$.
\end{defn}

Throughout the rest of the paper $G$ will denote a group generated by a finite set $X$ and hyperbolic
relative to a collection of NRH subgroups $\Hl $. In particular, all peripheral subgroups of $G$
are infinite. Note also that $|\Lambda |<\infty$ by Lemma \ref{finLambda}.

\begin{lem}\label{fixps}
With the above assumptions on $G$, every $\phi\in\A$ respects the peripheral
structure of $G$, i.e., for each $\lambda \in \Lambda$ there is a unique $\lambda ' \in \Lambda$ such that $\phi(H_\lambda)$ is a conjugate of $H_{\lambda'}$ in $G$. Moreover, the map $\Lambda \to \Lambda$,
$\lambda \mapsto \lambda '$  is a bijection.
\end{lem}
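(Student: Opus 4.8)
The plan is to exploit the algebraic characterization of peripheral subgroups via relative hyperbolicity together with the hypothesis that each $H_\lambda$ is NRH. First I would recall the key structural fact (essentially \cite[Cor. 1.14]{DS} applied in reverse, or Drutu--Sapir's characterization of peripheral subgroups up to conjugacy): if $G$ is hyperbolic relative to $\Hl$ and also hyperbolic relative to another collection $\{M_\mu\}_{\mu \in \mathrm{M}}$, and each $M_\mu$ is NRH, then each $M_\mu$ is conjugate into some $H_\lambda$. Combined with the observation that peripheral subgroups are "almost malnormal" (Lemma \ref{maln}) and that an NRH subgroup cannot be contained, up to finite index issues, in a smaller peripheral piece, one should be able to pin down the peripheral structure up to conjugacy and permutation. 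The cleanest route, however, is more direct: apply $\phi$ to the relative presentation \eqref{G}. Since $\phi$ is an automorphism, $G$ is also hyperbolic relative to the collection $\{\phi(H_\lambda)\}_{\lambda \in \Lambda}$ — relative hyperbolicity, being defined by the linear relative isoperimetric inequality in Definition \ref{def:rel_hyp_gp}, is clearly preserved under applying an automorphism, which simply relabels generators and peripheral elements. Moreover each $\phi(H_\lambda)$ is NRH since it is isomorphic to $H_\lambda$, and NRH is an isomorphism invariant.

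Next I would invoke the uniqueness (up to conjugacy and permutation) of an NRH peripheral structure. This is the heart of the argument and I expect it to be the main obstacle, or at least the place where one must cite the right result precisely. The relevant statement is that if $G$ is hyperbolic relative to two collections $\{A_i\}$ and $\{B_j\}$ of NRH subgroups, then there is a bijection between the index sets such that corresponding subgroups are conjugate in $G$. One way to see this: a group that is hyperbolic relative to NRH subgroups cannot be hyperbolic relative to any strictly finer collection of proper subgroups (this is exactly what "the process terminates at an NRH collection" means, and is why the NRH condition is imposed); and two peripheral structures always admit a common refinement in an appropriate sense, or one can argue via asymptotic cones (each $A_i$ is quasiconvex, its asymptotic cone embeds, and cut-point arguments à la Drutu--Sapir force $A_i$ to be conjugate into some $B_j$ and vice versa). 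Concretely, by \cite[Cor. 1.14]{DS} or the corresponding statement about peripheral structures, each $\phi(H_\lambda)$ is conjugate into some $H_{\mu}$, and since $\phi(H_\lambda)$ is NRH it cannot be properly contained in (a conjugate of) a peripheral subgroup unless it equals a conjugate of it — otherwise $\phi(H_\lambda)$ would be relatively quasiconvex of infinite index in $H_\mu$ and one could build a proper peripheral structure on it from the induced structure of Lemma \ref{hr}, contradicting NRH.

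With that in hand, the bookkeeping is routine: define $\lambda'$ to be the unique index with $\phi(H_\lambda)$ conjugate to $H_{\lambda'}$. Uniqueness of $\lambda'$ follows from Lemma \ref{maln}(1): if $\phi(H_\lambda)$ were conjugate both to $H_{\lambda'}$ and to $H_{\mu}$ with $\lambda' \ne \mu$, then some conjugate of $H_{\lambda'}$ would equal a conjugate of $H_\mu$, forcing $|H_{\lambda'} \cap H_\mu^g| = \infty$ for some $g$, contradicting Lemma \ref{maln}(1) since peripheral subgroups are infinite. For the bijection: the map $\lambda \mapsto \lambda'$ is injective because if $H_{\lambda_1}$ and $H_{\lambda_2}$ both map to conjugates of $H_{\lambda'}$, then $\phi(H_{\lambda_1})$ and $\phi(H_{\lambda_2})$ are conjugate to each other, hence (applying $\phi^{-1}$) $H_{\lambda_1}$ and $H_{\lambda_2}$ are conjugate, so by Lemma \ref{maln}(2) and (1), $\lambda_1 = \lambda_2$. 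Since $\Lambda$ is finite by Lemma \ref{finLambda}, an injective self-map of $\Lambda$ is a bijection. The one point requiring a little care is ensuring the "each $\phi(H_\lambda)$ is conjugate into some $H_\mu$" step really yields conjugacy rather than mere containment; I would handle this by the NRH-maximality argument sketched above, or, if a cleaner citation is available, by directly quoting the uniqueness-of-NRH-peripheral-structure statement.
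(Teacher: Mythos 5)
Your overall skeleton (each $\phi(H_\lambda)$ is conjugate \emph{into} some peripheral subgroup; then upgrade to equality; then uniqueness and bijectivity via Lemma \ref{maln} and finiteness of $\Lambda$) matches the paper, and your uniqueness/injectivity bookkeeping is fine. But the step you yourself flag as the heart of the matter --- upgrading ``conjugate into'' to ``conjugate to'' --- is exactly where your argument breaks. Your claim that an NRH subgroup ``cannot be properly contained in a conjugate of a peripheral subgroup unless it equals a conjugate of it'' is false, and your proposed justification does not work: if $\phi(H_\lambda)\le g^{-1}H_\mu g$, then the induced peripheral structure on $\phi(H_\lambda)$ coming from Lemma \ref{hr} contains $g^{-1}H_\mu g\cap \phi(H_\lambda)=\phi(H_\lambda)$ itself, i.e.\ it is \emph{not} a collection of proper subgroups, so there is no contradiction with NRH. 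Concretely, in $G=(\mathbb{Z}\times\mathbb{Z})\ast\mathbb{Z}$ the NRH subgroup $\mathbb{Z}\times 2\mathbb{Z}$ sits properly inside the peripheral subgroup $\mathbb{Z}\times\mathbb{Z}$ with no contradiction; also note ``proper'' does not even give ``infinite index'' as you assert. The paper closes this gap differently: assuming $\phi(H_\lambda)\lneqq g^{-1}H_{\lambda'}g$, it applies $\phi^{-1}$ so that $K=\phi^{-1}(g^{-1}H_{\lambda'}g)$ \emph{properly contains} the peripheral subgroup $H_\lambda$; then $K$ is finitely generated and undistorted, hence by Lemmas \ref{ds} and \ref{hr} relatively hyperbolic with respect to intersections with peripheral conjugates, and --- this is the point your version is missing --- almost malnormality (Lemma \ref{maln}) together with $H_\lambda\lneqq K$ prevents $K$ from lying inside any peripheral conjugate, so the induced structure on $K\cong H_{\lambda'}$ consists of \emph{proper} subgroups, contradicting NRH. (Equivalently one can compose the two containments $\phi(H_\lambda)\le g^{-1}H_{\lambda'}g$ and $\phi^{-1}(H_{\lambda'})\le$ a peripheral conjugate and use Lemma \ref{maln} to force all inclusions to be equalities; either way, the automorphism must be used a second time.)

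Two further points. First, your citation for the ``conjugate into'' step is off: \cite[Cor.~1.14]{DS} is the refinement statement for peripheral structures and does not say that one peripheral (or NRH) subgroup is conjugate into another peripheral structure; the paper instead derives this from Lemma \ref{finLambda} (peripheral subgroups are finitely generated and undistorted, hence so are their $\phi$-images) plus Lemmas \ref{ds} and \ref{hr} plus NRH. Second, the ``uniqueness of an NRH peripheral structure up to conjugacy and permutation'' you would like to quote is a genuine theorem (it is in Behrstock--Dru\c tu--Mosher), but it is not among the results available in this paper, and your sketched proofs of it (``no strictly finer structure'', ``common refinement'', or the flawed maximality argument above) do not establish it; the paper deliberately avoids it by the elementary $\phi^{-1}$ pull-back argument just described.
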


\begin{proof}
By Lemma \ref{finLambda} every $H_\lambda $ is finitely generated and undistorted in $G$. Hence so is
$\phi (H_\lambda)$ (because an automorphism is always a quasiisometry when the group is equipped with a
word metric given by some finite generating set). By Lemma \ref{ds}, $\phi (H_\lambda )$ is relatively hyperbolic and each of its peripheral subgroups is an intersection of $\phi (H_\lambda )$ with a conjugate
of some $H_{\mu} $. Since $\phi (H_\lambda )\cong H_\lambda $ is an NRH group, it can be hyperbolic
only relative to itself. Therefore $\phi (H_\lambda )\le g^{-1}H_{\lambda '} g$ for some $g\in G$ and
${\lambda '} \in \Lambda $. If this inclusion is proper, then $\phi^{-1} (g^{-1}H_{\lambda '} g)$ properly contains
$H_\lambda $. Applying Lemma \ref{ds} one more time, we obtain that $\phi^{-1} (g^{-1}H_{\lambda '} g)$
is relatively hyperbolic with respect to a collection of subgroups $\mathcal K$ containing $H_\lambda $.
Since 
$\phi^{-1} (g^{-1}H_{\lambda '} g) \cong H_{\lambda '}$ is NRH, this is again impossible.
Hence $\phi (H_\lambda )=g^{-1}H_{\lambda '} g$. If $\phi (H_\lambda )$ is also conjugate to $H_{\lambda ''}$ for some $\lambda '' \in \Lambda $, then ${\lambda ''}={\lambda '}$ by Lemma \ref{maln} as all peripheral subgroups are infinite. Repeating the same arguments for $\phi^{-1}$ we obtain that $\lambda \mapsto \lambda '$  is injective (and, hence, bijective)
on $\Lambda $.
\end{proof}

From now on we fix an automorphism $\phi \in \A$.
For each $\lambda \in \Lambda$ fix $f_\lambda \in G$ and $\lambda' \in \Lambda$ so that
$\phi (H_\lambda )=f_\lambda ^{-1}H_{\lambda ^\prime} f_\lambda $.
Since $|\Lambda|<\infty$ and relative hyperbolicity is independent of the choice of the finite
generating set $X$, we can further assume that $f_\lambda^{\pm 1} \in X$  for every $\lambda \in \Lambda $.
Finally we set
\begin{equation}\label{eqS}
S=\max\limits_{x\in X} \{ |\phi (x)|_X, |\phi ^{-1}(x)|_X\}.
\end{equation}

\begin{defn}[{\bf Image of a path}] \label{def:image_of_path}
For every $x\in X$ we fix a shortest word $W_x$ in the alphabet $X$ that represents $\phi (x)$ in $G$.
Let $e$ be an edge of $\G $
labelled by some $g\in X\cup\mathcal H$.
By $\phi (e)$ we denote the path from $\phi (e_-)$ to $\phi (e_+)$ constructed as follows. If $g\in X$, we
define $\phi (e)$ to be the path with label $W_x$. If $g \in H_\lambda \setminus \{ 1\}$ for some $\lambda \in \Lambda $,
$\phi (g)=f_\lambda ^{-1}hf_\lambda $, where $h\in H_{\lambda ^\prime} \setminus \{ 1\}$. In this case we let
$\phi (e)$ to be the path of length $3$ with $\Lab (\phi(e)) \equiv f_\lambda ^{-1}hf_\lambda$.
Hence the middle edge of $\phi(e)$ will be its $H_{\lambda'}$-component; we will call it
the {\it companion} of $e$ and denote by $e_\phi$. Given a path $p=e_1\cdots e_k$ in $\G $, where $e_1, \ldots , e_k$
are edges of $p$, the path $\phi (e_1)\cdots \phi (e_k)$ will be called the {\it image} of $p$, and will be denoted by $\phi (p)$. Note that $\phi(p)_-=\phi(p_-)$ and $\phi(p)_+=\phi(p_+)$.
\end{defn}

Since every component of a geodesic path in $\G$ consists of a single edge, Definition \ref{def:image_of_path} together with Lemma \ref{fixps} and the fact that
$f_\lambda^{\pm 1} \in X$, for all $\lambda \in \Lambda$, easily imply the following

\begin{rem} \label{rem:comp_of_im_of_geod} For a geodesic path $p$ in $\G$, each component of $\phi(p)$ consists of a single edge.
\end{rem}

\begin{lem}\label{Xlength}
(a) Let $e$ be an edge of $\G $ labelled by a letter from $\mathcal H$. Then
$\lx (e_\phi )\le S\lx (e) +2$ and $\lx (e)\le S(\lx (e_\phi) +2)$.

\noindent (b) Let $e, f$ be edges of $\G $ labelled by letters from $\mathcal{H}$.
Then $e$ and $f$ are connected (i.e., there is $\lambda \in \Lambda$ such that
$\Lab(e),\Lab(f) \in H_\lambda \setminus\{1\}$, and vertices of $e$ and $f$
belong to the same left coset of $H_\lambda $)
if and only if $e_\phi $ and $f_\phi $ are connected.
\end{lem}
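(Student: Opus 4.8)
Let me think about this. We have an edge $e$ labelled by a letter $g \in \mathcal{H}$, say $g \in H_\lambda \setminus \{1\}$, so $\phi(g) = f_\lambda^{-1} h f_\lambda$ with $h \in H_{\lambda'} \setminus \{1\}$, and $e_\phi$ is the middle edge of the length-3 path $\phi(e)$, labelled by $h$.

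For part (a): $l_X(e)$ is the word-length in $X$ of $g$, and $l_X(e_\phi)$ is the word-length of $h$. Since $\phi$ is an automorphism with $|\phi(x)|_X \le S$ for $x \in X$ (and similarly $\phi^{-1}$), we have $|\phi(g)|_X \le S|g|_X$. Now $\phi(g) = f_\lambda^{-1} h f_\lambda$ with $f_\lambda^{\pm 1} \in X$, so $|h|_X \le |\phi(g)|_X + 2 \le S|g|_X + 2$, i.e. $l_X(e_\phi) \le S l_X(e) + 2$. Conversely $g = \phi^{-1}(f_\lambda^{-1} h f_\lambda) = \phi^{-1}(f_\lambda)^{-1} \phi^{-1}(h) \phi^{-1}(f_\lambda)$, so $|g|_X \le |\phi^{-1}(h)|_X + 2|\phi^{-1}(f_\lambda)|_X \le S|h|_X + 2S = S(|h|_X + 2)$, i.e. $l_X(e) \le S(l_X(e_\phi) + 2)$. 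That's the whole of (a) — it's just careful bookkeeping with the triangle inequality for word length under $\phi$ and conjugation by $f_\lambda^{\pm 1} \in X$.

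For part (b): By Lemma \ref{fixps} the map $\lambda \mapsto \lambda'$ is a bijection on $\Lambda$, so $e$ and $f$ being connected means they both have labels in the same $H_\lambda \setminus \{1\}$, which happens iff their companions $e_\phi, f_\phi$ have labels in the same $H_{\lambda'} \setminus \{1\}$ (using injectivity of $\lambda \mapsto \lambda'$ for the "only if" of matching the index). The substantive content is the coset condition. Suppose $e, f$ are connected: their vertices lie in a single left coset $uH_\lambda$. Apply $\phi$: the vertices $\phi(e_\pm), \phi(f_\pm)$ lie in $\phi(u)\phi(H_\lambda) = \phi(u) f_\lambda^{-1} H_{\lambda'} f_\lambda$. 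Now $e_\phi$ runs from $\phi(e_-)f_\lambda^{-1}$ to $\phi(e_-)f_\lambda^{-1}h = \phi(e_+)f_\lambda^{-1}$ (reading off the label $f_\lambda^{-1} h f_\lambda$ of $\phi(e)$), and similarly for $f_\phi$; so the vertices of $e_\phi$ and $f_\phi$ all lie in $\phi(u)f_\lambda^{-1} H_{\lambda'}$, a single left coset of $H_{\lambda'}$, hence $e_\phi$ and $f_\phi$ are connected. The converse is symmetric, applying $\phi^{-1}$ and using that $\phi^{-1}(H_{\lambda'}) = \phi^{-1}(f_\lambda^{-1}) H_\lambda \phi^{-1}(f_\lambda)$ — or more simply, since $\phi$ is a bijection on cosets, the argument reverses: if the vertices of $e_\phi, f_\phi$ lie in a common left coset of $H_{\lambda'}$, pulling back through $\phi$ and the definition of the companion edge shows the vertices of $e$ and $f$ lie in a common left coset of $H_\lambda$.

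The only point requiring a little care — the "main obstacle", such as it is — is getting the coset arithmetic in (b) exactly right: tracking which vertex of $\phi(e)$ is the start of the companion edge $e_\phi$ and checking that the $f_\lambda$ on the two ends of $\phi(e)$ is the \emph{same} element $f_\lambda$ (which it is, since $f_\lambda$ depends only on $\lambda$, and $e, f$ share the same $\lambda$), so that the cosets $\phi(u)f_\lambda^{-1}H_{\lambda'}$ coming from $e$ and from $f$ genuinely coincide rather than merely being conjugate. Everything else is routine triangle-inequality estimates. No deep input beyond Lemma \ref{fixps} and the assumption $f_\lambda^{\pm1} \in X$ is needed.
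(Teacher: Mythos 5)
Your argument is correct and follows essentially the same route as the paper: part (a) is the same triangle-inequality bookkeeping with $|f_\lambda|_X\le 1$ and the definition of $S$, and part (b) is the same coset computation using $(e_\phi)_-=\phi(e_-)f_\lambda^{-1}$ together with the uniqueness of $\lambda\mapsto\lambda'$ from Lemma \ref{fixps} for the converse. The only blemish is the parenthetical formula $\phi^{-1}(H_{\lambda'})=\phi^{-1}(f_\lambda^{-1})H_\lambda\phi^{-1}(f_\lambda)$, which has the conjugation the wrong way round (it should be $\phi^{-1}(f_\lambda)H_\lambda\phi^{-1}(f_\lambda)^{-1}$), but your main ``pull back the coset'' argument does not rely on it.
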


\begin{proof}
(a)\; Recall that $|f_\lambda|_X \le 1 $ for every $\lambda\in \Lambda$. Using the triangle inequality
and \eqref{eqS} we obtain $\lx (e_\phi )\le \lx (\phi (e)) +2 \le S\lx (e) +2$. Similarly
$ \lx (e)\le S\lx (\phi (e)) \le S(\lx (e_\phi) +2).$

(b)\; Let $x=e_-$, $y=f_-$, $x' =(e_\phi )_-$, $y' =(f_\phi )_-$. If $e$ and $f$ are connected,
then $\Lab(e),\Lab(f) \in H_\lambda \setminus\{1\}$, for some $\lambda \in \Lambda$, and
$x^{-1}y=h\in H_\lambda $. Then $\Lab(e_\phi),\Lab(f_\phi) \in H_{\lambda'} \setminus\{1\}$ and
$\phi(h) \in f_\lambda^{-1} H_{\lambda'} f_\lambda$.
Clearly $x' = \phi (x)f_\lambda ^{-1}$ and $y' = \phi (y)f_\lambda ^{-1}$.
Therefore $(x')^{-1}y' = f_\lambda \phi (x^{-1}y)f_\lambda ^{-1}=
f_\lambda \phi (h) f_\lambda ^{-1} \in H_{\lambda'} $.

Conversely, suppose that $e_\phi$ and $f_\phi$ are connected, i.e.,
$\Lab(e_\phi), \Lab(f_{\phi}) \in H_\mu \setminus \{1\}$, for some $\mu \in \Lambda$ and
$(x')^{-1}y' \in H_\mu$. By Lemma \ref{fixps}, there is a unique $\lambda \in \Lambda$
such that $\mu=\lambda'$. Thus $\Lab(e),\Lab(f) \in H_\lambda \setminus\{1\}$ and
$x' = \phi (x)f_\lambda ^{-1}$, $y' = \phi (y)f_\lambda ^{-1}$. Consequently
$x^{-1}y=\phi^{-1}(f_{\lambda}^{-1} (x')^{-1}y'f_\lambda) \in H_\lambda$, implying that $e$ and $f$
are connected as well.
\end{proof}

\begin{lem}\label{qgimage}
(a) Suppose that $p$ is a path without backtracking in $\G $ such that every component of $p$ is an edge. Then $\phi (p)$ is a path without backtracking.

\noindent (b) For every $\varkappa\ge 1$, $c>0$, there exists a constant $A=A(\varkappa, c)\ge 2$
such that for any $(\varkappa , c)$-quasigeodesic $p$ in $\G$, $\phi(p)$ is $(A, A)$-quasigeodesic.
\end{lem}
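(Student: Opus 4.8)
The plan is to treat the two parts separately, using Lemma \ref{Xlength}(b) together with Lemma \ref{fixps} for (a), and for (b) the observation that $\phi$ distorts the relative metric $\dxh$ by at most a uniform multiplicative factor. For part (a): by Definition \ref{def:image_of_path}, every edge of $\phi(p)$ labelled by a letter of $\mathcal H$ is the companion $e_\phi$ of some $\mathcal H$-edge $e$ of $p$, and inside $\phi(e)$ this companion is immediately preceded and followed by the $X$-edges labelled $f_\lambda^{\mp1}$; hence distinct companions are separated in $\phi(p)$ by $X$-edges, so every $\mathcal H$-component of $\phi(p)$ is a single edge of the form $e_\phi$ (this is the analogue of Remark \ref{rem:comp_of_im_of_geod}, and the hypothesis that every component of $p$ is an edge is exactly what ensures that every $\mathcal H$-edge of $p$ is itself a component). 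It then remains to show that two companions $e_\phi, f_\phi$ coming from distinct $\mathcal H$-edges $e\ne f$ of $p$ are never connected: if $e$ is an $H_\lambda$-edge and $f$ an $H_\mu$-edge with $\lambda\ne\mu$ then $e_\phi, f_\phi$ lie in $H_{\lambda'}, H_{\mu'}$ with $\lambda'\ne\mu'$ by the injectivity in Lemma \ref{fixps}, and if $\lambda=\mu$ then $e,f$ are distinct components of the backtracking-free path $p$, hence not connected, and Lemma \ref{Xlength}(b) transfers this to $e_\phi, f_\phi$. Thus every component of $\phi(p)$ is isolated.

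For part (b), put $M=\max\{S,3\}$; Definition \ref{def:image_of_path} gives $l(\phi(e))\le M$ for every edge $e$ of $\G$, hence $l(\phi(q))\le M\,l(q)$ for every path $q$. I would then record that $\phi$ and $\phi^{-1}$ are $L$-Lipschitz with respect to $\dxh$ for a constant $L$ depending only on $S$: for $x\in X$ this is immediate from \eqref{eqS}; for $g\in H_\lambda\setminus\{1\}$ one has $\phi(g)=f_\lambda^{-1}hf_\lambda$ with $f_\lambda^{\pm1}\in X$, so $\dxh(1,\phi(g))\le3$, and the estimate for $\phi^{-1}$ is obtained the same way after writing $\phi^{-1}(H_\mu)$ as a conjugate of the appropriate $H_\lambda$ by $\phi^{-1}(f_\lambda)$, whose $X$-length is at most $S$ since $f_\lambda\in X$. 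Given a subpath $q'$ of $\phi(p)$, let $q=e_a\cdots e_b$ be the smallest subpath of $p$ with $\phi(q)\supseteq q'$; since $q'_-$ and $\phi(q)_-=\phi(q_-)$ both lie on the subpath $\phi(e_a)$ of length $\le M$ (and similarly at the other end) we have $\dxh(q'_-,\phi(q_-))\le M$ and $\dxh(q'_+,\phi(q_+))\le M$. Hence
\[
l(q')\ \le\ l(\phi(q))\ \le\ M\,l(q)\ \le\ M\bigl(\varkappa\,\dxh(q_-,q_+)+c\bigr),
\]
and since $\phi^{-1}$ is $L$-Lipschitz, $\dxh(q_-,q_+)\le L\,\dxh(\phi(q_-),\phi(q_+))\le L\bigl(\dxh(q'_-,q'_+)+2M\bigr)$, giving $l(q')\le M\varkappa L\,\dxh(q'_-,q'_+)+2M^2\varkappa L+Mc$. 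So $\phi(p)$ is $(A,A)$-quasigeodesic with $A=\max\{2,\,M\varkappa L,\,2M^2\varkappa L+Mc\}$.

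I expect the only genuinely delicate point to be bookkeeping rather than anything conceptual: a subpath of $\phi(p)$ need not be the image of a subpath of $p$, since its endpoints can fall in the interior of some $\phi(e)$, and this is what forces both the $2M$ correction terms and the passage between $\dxh(q_-,q_+)$ and $\dxh(\phi(q_-),\phi(q_+))$ through the two-sided Lipschitz control of $\phi$ on the relative Cayley graph. Everything else follows directly from Definition \ref{def:image_of_path}, Lemma \ref{fixps}, and Lemma \ref{Xlength}.
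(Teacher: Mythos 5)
Your proof is correct and follows essentially the same route as the paper: part (a) via the observation that the components of $\phi(p)$ are exactly the companions of the ($\mathcal H$-edge) components of $p$, with connectedness transferred by Lemma \ref{Xlength}(b) (and Lemma \ref{fixps}), and part (b) via the bound $l(\phi(q))\le\max\{S,3\}\,l(q)$ together with the $(2S+1)$-Lipschitz control of $\phi^{-1}$ on $\dxh$ and the bounded-offset reduction of a subpath of $\phi(p)$ to the image of a subpath of $p$. The only differences are in the explicit constants, which is immaterial.
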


\begin{proof}
To prove part (a) it suffices to note that every component of $\phi (p)$ is a companion of some component of $p$ and two components of $p$ are connected if and only if their companions are connected by Lemma \ref{Xlength}.

For proving part (b), observe that $|\phi^{-1}(g)|_{X \cup \mathcal{H}} \le
(2S+1)|g|_{X \cup \mathcal{H}}$ for every $g \in G$, and hence $\dxh(x,y) \le (2S+1)\dxh(\phi(x),\phi(y))$
for all $x,y \in G$.  Consider any subpath $q'$ of $\phi(p)$. By definition, there is a subpath
$q$ of $p$ such that $q'$ is contained in $\phi(q)$ and $\dxh(q'_-,\phi(q)_-) \le S+1$,
$\dxh(q'_+,\phi(q)_+) \le S+1$. Therefore,
\begin{multline*} l(q') \le l(\phi(q)) \le \max\{S,3\} l(q) \le \max\{S,3\} (\varkappa \dxh(q_-,q_+)+c) \le
\\ \max\{S,3\} \varkappa (2S+1)\dxh(\phi(q)_-,\phi(q)_+) + \max\{S,3\}c \le A\dxh(q'_-,q'_+) +A,
\end{multline*} where $A= \max\{S,3\} \varkappa (2S+1)(2S+2)+\max\{S,3\}c$.
\end{proof}

\begin{defn}[{\bf Fine geodesics}] Let $E$ be a non-negative real number. A geodesic $p$ in $\G $ will
be called $E$-{\it fine} if no component $e$ of $p$, with $l_X(e) > E$,
is connected to its companion.
\end{defn}

An easy argument (see Lemma \ref{lem:conn_to_comp->close_to_fix} in Section \ref{sec:main-proof}) shows that if a component of a geodesic segments $[x,y]$,
with $x,y \in \F$, is connected to its companion, then its endpoints are close to $\F$. Therefore the rest this section is devoted to studying properties of $E$-fine
geodesics.

\begin{lem} \label{lem:E-fine_close_E'-fine} Consider arbitrary $E_0\ge0$ and $\mu \ge 0$, and set
$E=E_0+2\e$ where $\e=\e(1,0,\mu)$ is given by Lemma \ref{BCP}. Suppose $p$ and $q$ are two geodesic segments
in $\G$ with $\dx(p_-,q_-)\le \mu$ and $\dx(p_+,q_+) \le \mu$, and $p$ is $E_0$-fine. Then $q$ is $E$-fine.
\end{lem}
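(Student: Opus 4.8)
The plan is to argue by contradiction: suppose $q$ is not $E$-fine, so there is a component $t$ of $q$ with $\lx(t) > E = E_0 + 2\e$ that is connected to its companion $t_\phi$. I will transport this ``bad component'' across to $p$ and derive a violation of the $E_0$-fineness of $p$. The natural tool is the Bounded Coset Penetration property (Lemma \ref{BCP}), applied twice: once to the pair $(p,q)$, and once to the pair $(\phi(p), \phi(q))$. For the first application I need $\lx(t)$ large enough, which is exactly why $E$ was chosen to exceed $E_0$ by $2\e$; for the second I need to know that $\phi(p)$ and $\phi(q)$ are quasigeodesics without backtracking with controlled endpoints.

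The key steps, in order, are as follows. First, since $p$ and $q$ are geodesics in $\G$ they are $(1,0)$-quasigeodesics without backtracking, their components are single edges, and $\dx(p_-,q_-)\le\mu$, $\dx(p_+,q_+)\le\mu$; so Lemma \ref{BCP}(a) with $\varkappa=1$, $c=0$, $k=\mu$ applies. Because $\lx(t)>E\ge\e=\e(1,0,\mu)$, there is an $H_\lambda$-component $s$ of $p$ connected to $t$. By Lemma \ref{BCP}(b), $\dx(s_-,t_-)\le\e$ and $\dx(s_+,t_+)\le\e$, hence by the triangle inequality $\lx(s)\ge\lx(t)-2\e > E_0$. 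Second, I claim $s$ is connected to its companion $s_\phi$. Indeed $s$ and $t$ are connected, so by Lemma \ref{Xlength}(b) $s_\phi$ and $t_\phi$ are connected; and $t_\phi$ is connected to $t$ by assumption, hence (connectedness being an equivalence relation among components lying in the same coset — the three components $s_\phi, t_\phi, t$ would then all lie in a common coset, and so would $s$) I get that $s_\phi$ and $s$ lie in the same coset. The cleanest way to phrase this: $t$ connected to $t_\phi$ means the vertices of $t$ and of $t_\phi$ lie in a common left coset $gH_{\lambda'}$; the vertices of $s_\phi$ lie in the same coset since $s_\phi$ is connected to $t_\phi$; and the vertices of $t$ lie in the same coset as those of $s$ since $s$ is connected to $t$. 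Translating the relation ``vertices of $s$ and vertices of $s_\phi$ lie in a common coset'' back through the explicit formula $x' = \phi(x)f_\lambda^{-1}$ from Definition \ref{def:image_of_path} (as in the proof of Lemma \ref{Xlength}(b)) shows precisely that $s$ is connected to $s_\phi$. Thus $s$ is a component of the geodesic $p$ with $\lx(s) > E_0$ that is connected to its companion, contradicting that $p$ is $E_0$-fine. This proves $q$ is $E$-fine.

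The main obstacle is the bookkeeping around the equivalence ``$s$ connected to $s_\phi$''. One has to be careful that ``connected'' for components of $p$ (resp. $q$) and ``connected'' for companions are compatible in the right direction, and that the chain $s \leftrightarrow t$, $t\leftrightarrow t_\phi$, $s_\phi\leftrightarrow t_\phi$ really forces $s\leftrightarrow s_\phi$ rather than only placing $s_\phi$ in the coset of $t$. This is handled by Lemma \ref{Xlength}(b), which is exactly the statement that $e\leftrightarrow f$ iff $e_\phi\leftrightarrow f_\phi$: apply it with $e=s$, $f=t$ to pass from $s\leftrightarrow t$ to $s_\phi\leftrightarrow t_\phi$, then combine with $t_\phi\leftrightarrow t$ and $s\leftrightarrow t$ inside the single coset $gH_{\lambda'}$ to conclude $s\leftrightarrow s_\phi$. (Note Lemma \ref{BCP} does not require the second path to be without backtracking beyond what we already have, and geodesics automatically satisfy all the hypotheses, so no second invocation of BCP on the images $\phi(p),\phi(q)$ is actually needed — the companion formalism already encodes the image-side connectedness.) The only quantitative input is the $2\e$ slack, which is used exactly once, to pass from $\lx(t) > E_0 + 2\e$ to $\lx(s) > E_0$.
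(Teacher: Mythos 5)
Your proposal is correct and follows essentially the same argument as the paper: assume a long component of $q$ is connected to its companion, transfer it to a component of $p$ via Lemma \ref{BCP} (losing $2\e$ in $X$-length), then use Lemma \ref{Xlength}(b) and the transitivity of connectedness to contradict the $E_0$-fineness of $p$. Your extra remarks (the explicit coset bookkeeping and the observation that no second BCP application to $\phi(p),\phi(q)$ is needed) only spell out details the paper leaves implicit.
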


\begin{proof} Recall that a component of a geodesic path in $\G$ is always a single edge.
Assume, on the contrary, that there is a component $e$ of $q$
that is connected to its companion $e_\phi$ and $\lx(e) >E$. Since $E>\e$, part (a) of Lemma \ref{BCP}
implies that $e$ is connected to some component $f$ of $p$, and part (b) together with the
triangle inequality yield $\lx(f) \ge \lx(e)-2\e>E_0$. Hence, according to the assumptions,
$f$ is not connected with $f_\phi$. On the other hand, $f_\phi$ and $e_\phi$ are connected by Lemma \ref{Xlength},
and since ``connectedness'' is a symmetric and transitive relation,
one can conclude that $f$ must be connected with $f_\phi$,
arriving to a contradiction. Thus $q$ is $E$-fine.
\end{proof}

The following lemma establishes a sort of local finiteness for $E$-fine geodesics.
\begin{lem}\label{cascades}
For every $E\ge 0$ there exists an increasing function
$\alpha \colon \mathbb N \cup \{0\} \to \mathbb (1,+\infty)$ such that the following holds.
Let $p$ be an $E$-fine geodesic in $\G $ such that $p_-, p_+\in \F $. Suppose that $p=p_1ep_2$,
where $e$ is a component of $p$. Then $\lx (e)\le \alpha (l(p_1))$.
\end{lem}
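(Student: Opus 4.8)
The plan is to argue by strong induction on $n=l(p_1)$, defining $\alpha(n)$ recursively from $\alpha(0),\dots,\alpha(n-1)$ and the absolute constants $S$, $A$, $\e$ introduced below; the values $\alpha(n)$ will visibly be increasing and $\ge 2$. The geometric setup is as follows. Since $p_-,p_+\in\F$ we have $\phi(p)_-=\phi(p_-)=p_-$ and $\phi(p)_+=\phi(p_+)=p_+$, so $\phi(p)$ is a path with the \emph{same} endpoints as $p$. As $p$ is geodesic, every component of $p$, and by Remark \ref{rem:comp_of_im_of_geod} every component of $\phi(p)$, is a single edge; hence $\phi(p)$ is without backtracking by Lemma \ref{qgimage}(a), and it is $(A,A)$-quasigeodesic for $A=A(1,1)$ by Lemma \ref{qgimage}(b) (a geodesic being $(1,1)$-quasigeodesic). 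Fix $\e=\e(A,A,0)$ from Lemma \ref{BCP}, which we may apply to the pair $(p,\phi(p))$, since $p$ is $(1,0)$- and hence $(A,A)$-quasigeodesic and the two paths share endpoints. We also record that $\phi$ does not shorten paths: $l(\phi(q))\ge l(q)$ for every path $q$, because each edge is replaced by a path of length $\ge 1$ (Definition \ref{def:image_of_path}).

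The heart of the proof is a ``bounce'' step. Let $e=e_i$ be a component of $p$ with $\lx(e)>\max\{E,\e\}$. By Lemma \ref{BCP}(a) some component $t$ of $\phi(p)$ is connected to $e$; by the shape of $\phi(p)$, $t$ is the companion $f_\phi$ of a unique component $f$ of $p$, and $f\ne e$, since $f=e$ would make $e$ connected to its own companion, contradicting $E$-fineness. By Lemma \ref{BCP}(b), $\dx(e_-,(f_\phi)_-)\le\e$ and $\dx(e_+,(f_\phi)_+)\le\e$, so $\lx(f_\phi)\ge\lx(e)-2\e$; combined with Lemma \ref{Xlength}(a) this gives $\lx(f)\ge(\lx(e)-2\e-2)/S$, so $f$ is again ``long'' once $\lx(e)$ is large. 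For the position of $f$: $(f_\phi)_-$ lies within $\dxh$-distance $n+\e$ of $p_-=\phi(p)_-$, so the initial subpath of the $(A,A)$-quasigeodesic $\phi(p)$ ending at $(f_\phi)_-$ has length $\le A(n+\e)+A$; this subpath is $\phi(e_1)\cdots\phi(e_{j-1})$ followed by one further edge (where $f=e_j$), so $l(p_1^{f})\le l(\phi(e_1\cdots e_{j-1}))\le A(n+\e)+A-1$, where $p_1^{f}$ is the subpath of $p$ preceding $f$. Thus $f$'s combinatorial position is bounded by a linear function of $n$.

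Now split into cases. If $f$ precedes $e$ in $p$, then $l(p_1^{f})<l(p_1)=n$, so by the inductive hypothesis $\lx(f)\le\alpha(n-1)$, and feeding this into $\lx(e)\le S\lx(f)+2\e+2$ bounds $\lx(e)$ — this is the case that determines the recursion for $\alpha(n)$. If $f$ comes after $e$, one iterates the bounce, producing a sequence $e=c_0,c_1=f,c_2,\dots$ of components with $\lx(c_k)\ge\lx(c_{k-1})/S-(2\e+2)/S$ and $l(p_1^{c_k})\le A\,l(p_1^{c_{k-1}})+A\e+A$, which can be continued as long as $\lx(c_k)>\max\{E,\e\}$; these components are pairwise distinct because $c_k\mapsto c_{k+1}$ (``the component of $p$ whose companion is connected to $c_k$'') is injective on long components, while each $c_k$ with $k\ge1$ fails to be connected to its own companion. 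The main obstacle — and the crux — is to show that this cascade must, after a number of steps controlled in terms of $n$ and $E$, yield a component lying in $p_1$ (so that the previous case applies): one must rule out, or quantitatively control, the possibility of an indefinitely long cascade of distinct long components staying weakly ahead of $e$, using that a path has only finitely many components at any bounded combinatorial position together with the coset-level constraints that an iterated bounce imposes (a periodic cascade forces the element $e_-$ to be quasi-fixed by a power of $\phi$ modulo peripheral elements, hence of bounded $\dxh$-distance from $p_-$). Tracking the constants through this bounded iteration and through Lemma \ref{Xlength}(a) then produces an explicit increasing $\alpha\colon\N\cup\{0\}\to(1,+\infty)$ with $\lx(e)\le\alpha(l(p_1))$.
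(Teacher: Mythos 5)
Your reduction to induction on $n=l(p_1)$, and your handling of the case where the component of $\phi(p)$ connected to $e$ is the companion of a component of $p_1$ (i.e.\ sits behind $e$), coincide with the paper's argument. But the other case — where the connected component $c$ of $\phi(p)$ lies in $\phi(p_2)$, ahead of $e$ — is exactly where your proof stops being a proof. You propose to iterate the ``bounce'' forward and then write that the crux is to show the cascade returns to $p_1$ after a number of steps controlled by $n$ and $E$; this is not established, and the estimates you do give point the wrong way: the combinatorial position of the $k$-th cascade component is only bounded by roughly $A^k n$, the path $p$ can be arbitrarily long, and each bounce costs a factor of $S$ in the length estimate ($\lx(c_{k-1})\le S\lx(c_k)+2\e+2$), so without a bound on the number of bounces in terms of $n$ alone you get no function $\alpha(n)$ at all — even a cascade that merely terminates because its components become short yields nothing. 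The parenthetical heuristic about a periodic cascade forcing $e_-$ to be ``quasi-fixed by a power of $\phi$ modulo peripheral elements'' is not an argument, and it is not clear it can be turned into one. So there is a genuine gap, and it is precisely the case the lemma is hardest in.

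The paper resolves this case with a different, non-iterative device that your sketch misses: when $c$ lies in $\phi(p_2)$, do not chase the component connected to $e$ forward; instead look at $e$'s own companion $e_\phi$. Let $s$ be the edge from $e_-$ to $c_-$ (they are connected, so such an edge exists) and let $t$ be the initial segment of $\phi(p)$ from $p_-$ to $c_-$; then $q=p_1s$ is a $(1,2)$-quasigeodesic without backtracking sharing endpoints with $t$, and $e_\phi$ is a component of $t$. If $\lx(e_\phi)\le\e$ one bounds $\lx(e)$ directly via Lemma \ref{Xlength}; otherwise Lemma \ref{BCP} forces $e_\phi$ to be connected to a component of $q$, and it cannot be connected to $s$ (that would connect $e_\phi$ to $c$, contradicting that $\phi(p)$ is without backtracking), so it is connected to a component $h$ of $p_1$ — which lies \emph{behind} $e$, so the induction hypothesis applies to $h$ and one gets $\lx(e)\le S(\lx(h)+2\e+2)$. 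This single ``reflection'' through the truncated pair $(q,t)$ is what replaces your unbounded forward cascade and makes the recursion $\alpha(n)=(E+\e+2)(S+2\e+2)^{n+1}$ close. To repair your write-up you would need either to supply this trick (or an equivalent) or to actually prove your claimed quantitative control on cascade length, which you have not done.
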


\begin{proof}
We will establish the claim by induction on $n=l(p_1)$.
Note that $\phi (p)$ is an
$(A,A)$-quasigeodesic without backtracking, where $A=A(1,0)\ge 2$ is the constant from Lemma \ref{qgimage}.
Let $\e=\e(A,A,0) \ge 0$ be the constant provided by Lemma \ref{BCP}.
Set $$\alpha(n)=(E+\e+2)(S+2\e+2)^{n+1} ~\mbox{for all } n \in \N\cup\{0\}.$$
If $\lx(e) \le \e +E$ then $\lx (e)\le \alpha (n)$ for each $n \in \N \cup \{0\}$ and the claim holds. Otherwise,
by Lemma \ref{BCP}, $e$ must be connected with a component $c$ of $\phi(p)$ and $c \neq e_\phi$ by the assumptions.
Thus there are two cases to consider.

{\it Case 1.} $c$ is a component of $\phi (p_2)$ (in particular, this happens when $n=0$). Let $s$ be an edge connecting $e_-$ to $c_-$ and let $t$ denote the segment of
$\phi (p)$ from $p_-=\phi(p)_-$ to $c_-$. Note that the path $q=p_1s$ is a $(1,2)$-quasigeodesic
in $\G$ (and hence it is an $(A,A)$-quasigeodesic) without backtracking, and the edge $e_\phi$ is a
component of $t$. If $\lx(e_\phi) \le \e$, then
$\lx (e)\le S(\e +2)$ by Lemma \ref{Xlength}; consequently $\lx (e)\le \alpha(n)$ for all
$n \in \N\cup\{0\}$ .

Suppose, now, that $\lx(e_\phi) > \e$, then Lemma \ref{BCP}, applied to the quasigeodesics
$q$ and $t$, implies that $e_\phi$ must be connected with some component of $q$.
And since $\phi(p)$ is without backtracking (by Lemma \ref{qgimage}) and $e_\phi \neq c$, $e_\phi$
cannot be connected to $s$. Hence $e_\phi$ is connected with a component $h$ of $p_1$.
In particular, $l(p_1)>0$, i.e., the base
of induction ($n=0$) has already been established. By the induction hypothesis we have $\lx(h) \le \alpha(n-1)$. On the other
hand, $\max \{ \dx ((e_\phi)_-, h_-), \dx ((e_\phi)_+, h_+)\} \le \e$ by the second part of Lemma \ref{BCP},
and the triangle inequality gives $\lx (e_\phi)\le \lx (h) + 2\e$.
Combining these with the claim of Lemma \ref{Xlength}, we obtain
\begin{multline*}\lx (e)\le S(\lx (e_\phi) + 2) \le S(\lx (h) + 2\e +2) \le S(\alpha(n-1)+2\e+2) = \\
(S+2\e+2) \alpha(n-1)-(2\e+2)(\alpha(n-1)-S)<\alpha(n).\end{multline*}

{\it Case 2.} $c$ is a component of $\phi (p_1)$. Note that $\max \{ \dx (e_-, c_-), \dx (e_+, c_+)\} \le \e$ by Lemma \ref{BCP},
therefore $\lx (e)\le \lx (c) + 2\e$ by the triangle inequality. Since every component of $\phi (p)$ is the companion of some component of $p$, $c=f_\phi $ for some component  $f$ of $p_1$. By induction, $\lx (f)\le \alpha (n-1)$. Hence, recalling
the statement of part (a) of Lemma \ref{Xlength}, we obtain
$$\lx (e)\le \lx (f_\phi) + 2\e\le S\lx (f) +2\e+2 \le (S+2\e +2)\alpha (n-1)=\alpha(n). $$
Thus we have established the inductive step and finished the proof of the lemma.
\end{proof}

\begin{defn}[{\bf The set $\mathcal{I}(x,E,R)$}] Given $x \in \F$, $E>0$ and $R\ge0$, let $\mathcal{I}(x,E,R)$
denote the set of all geodesics $p$ in $\G$ of length at most $R$ that are initial segments of
$E$-fine geodesic paths connecting $x$ with elements of $\F$.
\end{defn}

The following corollary is an immediate consequence of Lemma \ref{cascades}:

\begin{cor}\label{cor:loc_fin} For any $x \in \F$, $E \ge 0$ and $R \ge 0$,
the set $\mathcal{I}(x,E,R)$ is finite. In particular, there exists $C=C(E,R) \ge 0$ such that for any $x\in \F$ and any
$p \in \mathcal{I}(x,E,R)$ one has $\lx(p) \le C$.
\end{cor}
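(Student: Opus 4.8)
The plan is to read this off directly from Lemma \ref{cascades}, exactly as the statement advertises. Fix $E\ge 0$ and $R\ge 0$ and let $\alpha\colon\mathbb N\cup\{0\}\to(1,+\infty)$ be the increasing function associated with $E$ by Lemma \ref{cascades}. The one estimate I would isolate is: for every $x\in\F$ and every $p\in\mathcal I(x,E,R)$, each edge $e$ of $p$ whose label lies in $\mathcal H$ satisfies $\lx(e)\le\alpha(\lfloor R\rfloor)$. Indeed, by definition $p$ is an initial segment of some $E$-fine geodesic $\hat p$ in $\G$ with $\hat p_-=x\in\F$ and $\hat p_+\in\F$. Since every component of a geodesic path in $\G$ consists of a single edge, $e$ is itself a component of $\hat p$, so we may write $\hat p=p_1\,e\,p_2$; as $e$ occurs inside the initial segment $p$ of $\hat p$, the prefix $p_1$ is also an initial segment of $p$, whence $l(p_1)\le l(p)\le R$, and in fact $l(p_1)\le\lfloor R\rfloor$ because $l(p_1)$ is an integer. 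Lemma \ref{cascades} together with the monotonicity of $\alpha$ then gives $\lx(e)\le\alpha(l(p_1))\le\alpha(\lfloor R\rfloor)$.

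From this estimate both conclusions are immediate. A path $p\in\mathcal I(x,E,R)$ has at most $R$ edges, and each of them contributes at most $\alpha(\lfloor R\rfloor)$ to the $X$-length of $p$: either its label lies in $X$, contributing $1\le\alpha(\lfloor R\rfloor)$ since $\alpha$ takes values in $(1,+\infty)$, or its label lies in $\mathcal H$, in which case the estimate above applies. Summing over the edges yields $\lx(p)=\dx(p_-,p_+)\le R\,\alpha(\lfloor R\rfloor)=:C(E,R)$, a bound that does not depend on $x$; this is the ``in particular'' clause. For a fixed $x\in\F$, finiteness of $\mathcal I(x,E,R)$ follows because each of its elements is a path of length at most $R$ starting at the single vertex $x$, hence is determined by its label, which is a word of length at most $R$ in the alphabet $X\cup\mathcal H$ each of whose letters is either one of the finitely many elements of $X$ or an element of $\bigsqcup_{\lambda\in\Lambda}(H_\lambda\setminus\{1\})$ of $X$-length at most $\alpha(\lfloor R\rfloor)$; there are only finitely many such words, since the ball of radius $\alpha(\lfloor R\rfloor)$ in $\Gamma(G,X)$ is finite.

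I do not anticipate any genuine obstacle: the entire content is carried by Lemma \ref{cascades}, and the only mild point to keep track of is that the resulting bound $\alpha(\lfloor R\rfloor)$ depends only on $E$ and $R$ and not on the starting point $x$, which is precisely what produces the uniform constant $C(E,R)$ in the ``in particular'' clause.
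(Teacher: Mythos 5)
Your proof is correct and follows essentially the same route as the paper, which derives the corollary directly from Lemma \ref{cascades}: each component of an initial segment of length at most $R$ has $X$-length bounded by $\alpha(\lfloor R\rfloor)$, which gives both the uniform bound $C(E,R)$ and, via finiteness of $X$-balls in $\Gamma(G,X)$, finiteness of $\mathcal{I}(x,E,R)$. The uniformity in $x$ that the paper attributes to label-preserving left translations is obtained in your version automatically from the fact that $\alpha$ depends only on $E$, which is a fine way to see it.
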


(The fact that $C$ does not depend on $x \in \F$ follows from the fact that left translation by $x$ is a
label-preserving automorphism of $\G$).

\begin{defn}[{\bf Large central component}] \label{def:large_centr_comp}
Consider any non-negative real number $E$, and let
$\Delta $ be a geodesic triangle in $\G $ with sides $p_1,p_2,p_3$. We will say that
$\Delta $ has an $E$-{\it large central component} if for each $i=1,2,3$, $p_i$ contains a component $a_i$,
$a_1, a_2, a_3$ are pairwise connected and \begin{equation}\label{lcc}
\lx (a_i) > T= \max \{S(3\e_0+2),E\},
\end{equation}
where $S$ is given by \eqref{eqS}, and $\e_0=\e (A,A,0) \ge 0$, $A=A(1,0) \ge 2$ are the constants
from Lemmas \ref{BCP} and \ref{qgimage}, respectively. The edges $a_1,a_2,a_3$ will be called the
\textit{sides} of the $E$-large central component.
\end{defn}

\begin{lem}\label{hren}
Let $E$ be a non-negative real number and $\Delta$ be a geodesic triangle in $\G $ with an $E$-large central
component. Suppose that vertices of $\Delta $ belong to $\F $. Then no side of $\Delta $ is $E$-fine;
more precisely, every side of the $E$-large central component is connected to its companion.
\end{lem}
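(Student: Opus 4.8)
The plan is to transport $\Delta$ by $\phi$ and then compare the two triangles via the Bounded Coset Penetration property (Lemma~\ref{BCP}). Relabel the sides so that $p_i$ runs from $v_i$ to $v_{i+1}$, indices taken cyclically mod $3$, with $v_1,v_2,v_3\in\F$ the vertices of $\Delta$, and let $a_i$ be the side of the $E$-large central component carried by $p_i$; thus $a_1,a_2,a_3$ are $H_\lambda$-components for one $\lambda\in\Lambda$, they are pairwise connected, and $\lx(a_i)>T$. Since $v_i\in\F$, the image $\phi(p_i)$ again goes from $v_i$ to $v_{i+1}$; by Lemma~\ref{qgimage} it is an $(A,A)$-quasigeodesic without backtracking with $A=A(1,0)$, and by Remark~\ref{rem:comp_of_im_of_geod} its components are single edges, so the companion $(a_i)_\phi$ is one of them. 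Lemma~\ref{Xlength}(a) gives $\lx((a_i)_\phi)\ge\lx(a_i)/S-2>T/S-2\ge3\e_0$, where $\e_0=\e(A,A,0)$, and Lemma~\ref{Xlength}(b) shows that $(a_1)_\phi,(a_2)_\phi,(a_3)_\phi$ are pairwise connected.

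Next, fix $i$ and apply Lemma~\ref{BCP} to the pair $p_i,\phi(p_i)$: these are quasigeodesics without backtracking sharing their endpoints, so the relevant constant is exactly $\e_0=\e(A,A,0)$. Since $\lx(a_i)>T>\e_0$, part~(a) yields a component $c_i$ of $\phi(p_i)$ connected to $a_i$, and by part~(b) the endpoints of $c_i$ lie within $\dx$-distance $\e_0$ of the corresponding endpoints of $a_i$; and since every component of $\phi(p_i)$ is the companion of a component of $p_i$, we have $c_i=(d_i)_\phi$ for a unique component $d_i$ of $p_i$. It now suffices to prove $d_i=a_i$ for each $i$: then $c_i=(a_i)_\phi$, so $a_i$ is connected to its companion, and as $\lx(a_i)>T\ge E$ the side $p_i$ is not $E$-fine. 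Moreover this property propagates: if $a_1\sim(a_1)_\phi$ then $\lambda'=\lambda$, and from $(a_i)_\phi\sim(a_1)_\phi\sim a_1\sim a_i$ we get $a_i\sim(a_i)_\phi$ for every $i$. Hence we may assume for contradiction that $d_i\ne a_i$ for all $i$.

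Under that assumption the $c_i=(d_i)_\phi$ are pairwise connected (by transitivity of connectedness, since the $a_i$ are), so by Lemma~\ref{Xlength}(b) the $d_i$ are pairwise connected; thus $\Delta$ would carry a second triple $\{d_i\}$ of pairwise-connected components of its sides, lying in a peripheral coset distinct from the one containing $\{a_i\}$. Ruling this out is the main obstacle. The intended argument: $a_i$ and $c_i=(d_i)_\phi$ lie in the \emph{same} coset, say $gH_\lambda$; since $p_i$ is geodesic, hence without backtracking, it meets $gH_\lambda$ in at most one component, and pulling $gH_\lambda$ back through $\phi$ — using Lemma~\ref{fixps} and the normalization $f_\mu^{\pm1}\in X$ — identifies $d_i$ as the unique component of $p_i$ in one explicit coset; one then plays the three sides of the triangle off against each other, invoking (as needed) that a geodesic triangle in $\G$ has an essentially unique centre which moves only boundedly when a side is replaced by a quasigeodesic with the same endpoints, together with the almost malnormality of the peripheral subgroups (Lemma~\ref{maln}), to force that explicit coset to be $gH_\lambda$ itself, i.e.\ $d_i=a_i$. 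This contradiction finishes the proof, and carrying out this last identification is the heart of the matter.
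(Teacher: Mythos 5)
Your first half is fine and follows the paper: transporting the sides by $\phi$, checking via Lemma \ref{Xlength} that the companions $(a_i)_\phi$ are pairwise connected and long ($\lx((a_i)_\phi)>3\e_0$), applying Lemma \ref{BCP} to the pair $p_i$, $\phi(p_i)$, and observing that if the component produced by BCP equals $a_i$ for one $i$ then, by transitivity of connectedness and the fact that distinct components of a geodesic side are never connected, every $a_i$ is connected to its companion and no side is $E$-fine. But the remaining case --- when the component produced by BCP differs from $a_i$ for every $i$ --- is precisely where you stop. You say yourself that ``carrying out this last identification is the heart of the matter,'' and the sketch you offer (an ``essentially unique centre'' of the triangle, stable under replacing a side by a quasigeodesic, combined with almost malnormality via Lemma \ref{maln}) is not an argument: no such stability statement is formulated or proved, stability of centres in the hyperbolic graph $\G$ would at best control $\dxh$-distances rather than the $\dx$-lengths that matter here, and it is not explained how malnormality would identify the relevant coset. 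So there is a genuine gap at the core of the proof.

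The paper closes this case by a purely combinatorial device, with no coset identification and no malnormality. Write $p_i=q_ia_ir_i$ and join $(a_i)_-$ to $(a_{i-1})_+$ by an edge $t_i$ (possible because the $a_i$ are pairwise connected); this cuts $\Delta$ into a central hexagon and three corner triangles $\Sigma_i=q_it_ir_{i-1}$ (indices mod $3$). Under the assumption that the component $b_i$ of $p_i$ connected to $(a_i)_\phi$ is not $a_i$, each $b_i$ lies in $q_i$ or $r_i$, hence in $\Sigma_i$ or $\Sigma_{i+1}$; since no single corner triangle can receive all three, some $\Sigma_i$ contains exactly one of them, say $b_j$, which is then an isolated component of $\Sigma_i$. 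The other two sides of $\Sigma_i$, namely $q_it_i$ and $r_{i-1}^{-1}$, are $(1,2)$-quasigeodesics without backtracking with common endpoints, so Lemma \ref{BCP} forces $\lx(b_j)\le\e_0$, contradicting $\lx(b_j)\ge \lx((a_j)_\phi)-2\e_0>\e_0$ from part (b) of the same lemma. Note also that the paper applies BCP in the direction opposite to yours (large component $(a_j)_\phi$ on $\phi(p_j)$, shadow $b_j$ on $p_j$), which is what supplies the lower bound on $\lx(b_j)$; your $d_i$, obtained by pulling back a component of $\phi(p_i)$, carries no such immediate bound, so even the setup for a contradiction of this type is weaker in your version.
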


\begin{proof}
Let $p_1,p_2,p_3$ denote the sides of $\Delta $ (such that $(p_{i+1})_-=(p_i)_+$, where the indices
are taken modulo $3$), and let $a_1,a_2,a_3$ be pairwise connected components of $p_1,p_2,p_3$,
respectively,  satisfying (\ref{lcc}). By part (b) of Lemma \ref{Xlength} the component
$(a_i)_\phi, (a_2)_\phi, (a_3)_\phi$  of $\phi (p_1), \phi (p_2), \phi (p_3)$ are also pairwise connected.
Note also that (\ref{lcc}) and part (a) of
Lemma \ref{Xlength} imply that $\lx ((a_i)_\phi )> 3\e_0 $ for $i=1,2,3$. Hence by
Lemmas \ref{qgimage} and \ref{BCP}, $(a_i)_\phi$ must be
connected to a component $b_i$ of $p_i$ for each $i=1,2,3$.
If $b_i=a_i$ for some $i$, then $b_i=a_i$ for all $i$ since all $b_i$ are connected
and every component of a side of $\Delta $ is isolated in that side (because the side is geodesic).
Therefore no side of $\Delta$ would be $E$-fine.
So we can assume that no $b_i$ coincides with $a_i$. Below we show that this case is
impossible by arriving at a contradiction.

\begin{figure}[!ht]
\begin{center}
   \input{fig1-n.pstex_t}
  \end{center}

\caption{} \label{fig:1}
\end{figure}

For each $i$, let $p_i=q_ia_ir_i$ and let $t_i$ denote the path (of length at most $1$)
of $\G$ connecting $(a_{i})_-$ to  $(a_{i-1})_+$ (here and below indices are modulo $3$).
The triangle $\Delta $ is cut into a hexagon and triangles
$\Sigma_i=q_it_{i}r_{i-1}$, $i=1,2,3$ (see Fig. \ref{fig:1}).
By our assumption, every component $b_i$ belongs to one of these triangles.
Hence at least one of the triangles, say $\Sigma_i$, contains exactly one of such components, say, $b_j$.
Again, since every component of a side of $\Delta $ is isolated in that side, $b_j$ is an isolated component of $\Sigma_i$. Note that $q_it_i$ and $r_{i-1}^{-1}$ are $(1,2)$-quasigeodesic paths without backtracking and
with same endpoints, hence  Lemma \ref{BCP} implies that $\lx(b_j) \le \e_0$.
However using part (2) of  Lemma \ref{BCP} and the triangle inequality 
we obtain  $\lx (b_j)\ge \lx ((a_j)_\phi )-2\e_0 > \e_0$,  resulting in a contradiction.
\end{proof}

\begin{defn}[{\bf Projections}]
Let $L$ be a geodesic in $\G$ and $z \in G$ be any vertex. The
\textit{projection} ${\bf pr}_{L} (z)$ of $z$ to  $L$ is the set of vertices defined by
$${\bf pr}_{L} (z)=\{g \in G \cap L\, |\, \dxh(z,g)=\dxh(z,L)\}.$$
\end{defn}

The next statement is quite standard.

\begin{lem} \label{lem:(3,0)-qg} Consider three vertices $x,y,z \in G$ in $\G$, a geodesic
segment $[x,y]$ between $x$ and $y$, any $g\in {\bf pr}_{[x,y]} (z)$ and any geodesic $p$ connecting
$z$ with $g$. Let $[x,y]=qr$ where $q$ and $r$ are geodesic subpaths with $q_-=x$, $q_+=g=r_-$, $r_+=y$.
Then $pq^{-1}$ and $pr$ are $(3,0)$-quasigeodesic paths without backtracking.
\end{lem}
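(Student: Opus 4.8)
The plan is to establish the two assertions --- that $pq^{-1}$ and $pr$ are $(3,0)$-quasigeodesic, and that they are without backtracking --- separately, both resting on one elementary property of the projection point $g$. Since $g$ realizes $\dxh(z,[x,y])$, every vertex $v$ of $[x,y]$ satisfies $\dxh(z,g)\le\dxh(z,v)$; substituting this into the triangle inequality $\dxh(z,v)\le\dxh(z,u)+\dxh(u,v)$ and using $\dxh(z,u)=\dxh(z,g)-\dxh(u,g)$, which holds because $u$ lies on the geodesic $p$, yields
\[
\dxh(u,g)\le\dxh(u,v)\qquad\text{for every vertex }u\text{ of }p\text{ and every vertex }v\text{ of }[x,y],
\]
and in particular whenever $v$ lies on $q$ or on $r$. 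I would record this inequality first, as it is used in both halves of the proof.

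For the quasigeodesic bound I would take an arbitrary subpath $w$ of $pq^{-1}$ and note that it is contained in $p$, or contained in $q^{-1}$ (in which cases $w$ is geodesic), or else it contains $g$ as an interior vertex and splits as $w=w'w''$, where $w'$ is a terminal subpath of $p$ ending at $g$ and starting at some vertex $u$, and $w''$ is an initial subpath of $q^{-1}$ starting at $g$ and ending at some vertex $v$. In this last case $l(w)=\dxh(u,g)+\dxh(g,v)$; the displayed inequality (with $u$ on $p$ and $v$ on $q\subseteq[x,y]$) gives $\dxh(u,g)\le\dxh(u,v)$, and then $\dxh(g,v)\le\dxh(g,u)+\dxh(u,v)\le 2\dxh(u,v)$, so $l(w)\le 3\dxh(u,v)=3\dxh(w_-,w_+)$. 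The argument for $pr$ is identical with $r$ in place of $q^{-1}$.

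For the absence of backtracking I would argue by contradiction. Suppose $pq^{-1}$ has two distinct connected $H_\lambda$-components $e$ and $e'$, and let $cH_\lambda$ be the common coset of their vertices. Because $p$ and $q$ are geodesic --- hence without backtracking and with single-edge components --- any $H_\lambda$-component of $pq^{-1}$ all of whose edges lie in $p$ is a component of $p$, any one with all edges in $q^{-1}$ is the reverse of a component of $q$, and any one straddling $g$ consists of the last edge of $p$ followed by the first edge of $q^{-1}$. Hence every configuration in which $e$ and $e'$ take their edges from the same one of $p,q^{-1}$, or in which one of them straddles $g$, is impossible at once, since it would produce two distinct connected components inside the geodesic $p$ or inside the geodesic $q$. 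In the one remaining configuration $e$ is an edge of $p$ and $e'$ an edge of $q^{-1}$, and here I would apply the displayed inequality with $u$ running over the two endpoints of $e$ and $v$ a vertex of $e'$ --- which lies on $q\subseteq[x,y]$ and in $cH_\lambda$, so that $\dxh(u,v)\le 1$ --- to get $\dxh(u,g)\le 1$ for both endpoints of $e$. Since $e$ is a single edge whose endpoints both lie at distance at most $1$ from $g$ along the geodesic $p$, this forces $e$ to be the last edge of $p$ and $g$ to lie in $cH_\lambda$; then the $g$-end of $e'$ lies in $cH_\lambda$ as well, hence at distance at most $1$ from $g$ in $\G$, and a short further step shows that $e$ and $e'$ must then lie in one component of $pq^{-1}$, or else witness backtracking in the geodesic $q$ --- a contradiction either way. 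The argument for $pr$ is verbatim the same, with $r$ replacing $q^{-1}$.

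The quasigeodesic estimate is routine. The step requiring care, and the one I expect to be the main obstacle, is the last part of the backtracking argument: one has to classify the components of $pq^{-1}$ according to which of $p$ and $q^{-1}$ their edges belong to, treat the corner vertex $g$ explicitly, and use repeatedly that any two vertices in a common peripheral coset are at distance at most $1$ in $\G$, in order to localize every connection between distinct components at the vertex $g$ --- where, as it turns out, the two offending edges already belong to the same component.
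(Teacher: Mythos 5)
Your proof is correct and takes essentially the same route as the paper's: the same projection inequality $\dxh(u,g)\le \dxh(u,v)$ (for $u$ on $p$ and $v$ on $[x,y]$), the same three-term estimate giving the constant $3$, and the same localization of any connection between a component of $p$ and a component of $q^{-1}$ (or $r$) at the corner $g$, where the two edges merge into one component of the concatenation. The paper's endgame is just slightly more direct (from $\dxh(s_-,g)\le 1$ it concludes $s_+=g$ and then $s'_-=g$ immediately), and your final ``short further step'' closes the same way: both endpoints of $e'$ lie in $cH_\lambda\ni g$, so along the geodesic $q^{-1}$ the nearer one must be $g$ itself, forcing $e$ and $e'$ to be adjacent and hence a single component.
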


\begin{proof} We will prove the statement for the path $pq^{-1}$ as the other case is symmetric. Consider any
subpath $t$ of $pq^{-1}$. The situations when $t_-,t_+$ both belong either to $p$ or to $q^{-1}$ are trivial,
therefore we can assume that $t_- \in p$ and $t_+ \in q^{-1}$.
Since $t_+ \in [x,y]$ and $g\in {\bf pr}_{[x,y]} (z)$, we have
$\dxh(z,g) \le \dxh(z,t_+)$. As $t_- \in p$ and $p$ is geodesic we also have $\dxh(t_-,g)=\dxh(z,g)-\dxh(z,t_-)$.
The triangle inequality gives $\dxh(z,t_+) - \dxh(z,t_-)\le \dxh(t_-,t_+) $.
Combining these inequalities together, we can
conclude that $\dxh(t_-,g) \le \dxh(t_-,t_+)$. Therefore, applying the triangle
inequality again, we achieve
$l(t)=\dxh(t_-,g)+\dxh(g,t_+) \le 2\dxh(t_-,g)+\dxh(t_-,t_+) \le 3\dxh(t_-,t_+)$,
as required.

The paths $p$ and $q^{-1}$ are geodesic, and, hence, are without backtracking. Now, suppose a component $s$
of $p$ is connected to a component $s'$ of $q^{-1}$. Then $\dxh(s_-,s'_+) \le 1$, but
$\dxh(s_-,g) \le \dxh(s_-,s'_+)$ as shown in the previous paragraph. Therefore
$g=s_+$ and thus $\dxh(s'_+,g) \le 1$, implying that $g=s'_-$. Consequently $ss'$ is a single component of
$pq^{-1}$, and so $pq^{-1}$ is without backtracking.
\end{proof}

\begin{lem}\label{proj}
For every $E \ge 0$ there exists a constant $\eta=\eta(E)\ge 0 $ such that the following holds.
Let $\Delta =\Delta (x,y,z)$ be a geodesic triangle in $\G $ which does not contain an $E$-large central component.
Then there exists a vertex $u\in {\bf pr}_{[x,y]} (z)$ (where $[x,y]$ denotes the
corresponding side of $\Delta$) and vertices $v\in [x,z]$, $w\in [y,z]$
such that $\max\{ \dx (u,v), \, \dx (u,w)\} \le\eta $.
\end{lem}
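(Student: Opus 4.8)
The plan is to show that the three sides of $\Delta$ come uniformly close together near the projection point $u$, exploiting the hypothesis that $\Delta$ has no $E$-large central component. First I would pick $u \in {\bf pr}_{[x,y]}(z)$ and a geodesic $p$ from $z$ to $u$. By Lemma \ref{lem:(3,0)-qg}, writing $[x,y]=qr$ with $q_+=u=r_-$, the concatenations $pq^{-1}$ and $pr$ are $(3,0)$-quasigeodesics without backtracking. Now I would apply a thin-triangles / quasigeodesic-closeness argument in the relative Cayley graph: the path $pq^{-1}$ runs from $z$ to $x$, as does the side $[x,z]$; since both are $(3,0)$-quasigeodesics without backtracking with the same endpoints, Lemma \ref{lem:qg-close} gives a $\nu=\nu(3,0,0)$ so that each phase vertex of $[x,z]$ is within $\nu$ (in the $X$-metric) of a phase vertex of $pq^{-1}$, and symmetrically. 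The point $u$ (as an endpoint of the geodesic subpaths, hence a phase vertex) then has a vertex $v' \in pq^{-1}$ with $\dx(u,v')\le \nu$, and similarly a vertex $w' \in pr$ with $\dx(u,w')\le\nu$.

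The issue is that $v'$ might lie on the $p$-part of $pq^{-1}$ rather than on $[x,z]$, and likewise $w'$ on the $p$-part of $pr$; I need vertices genuinely on $[x,z]$ and $[y,z]$. To fix this I would instead directly compare the three geodesic sides $[x,z]$, $[y,z]$, $[x,y]$, using the fact that each of the three ``split'' quasigeodesics $pq^{-1}$, $pr$ — together with $[x,z]$, $[y,z]$ — bounds a quasigeodesic bigon/triangle, and run Lemma \ref{lem:qg-close} between $[x,z]$ and $[y,z]$ (both quasigeodesics from $z$, within distance $\dx(x,y)$... no — rather between $[x,z]$ and $pr$, and between $pr$ and $[y,z]$). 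Concretely: $[x,z]$ and $pq^{-1}$ are two quasigeodesics without backtracking from $x$ to $z$, so a phase vertex of $[x,z]$ near $u$ exists; then $[y,z]$ and $pr$ give a phase vertex of $[y,z]$ near $u$. The remaining gap — ensuring these near-vertices, which a priori could sit on $p$, actually correspond to vertices of $[x,z]$ and $[y,z]$ near $u$ rather than near $z$ — is handled because $u$ is a phase vertex of the quasigeodesic $pq^{-1}$ itself, so its close phase vertex $v\in[x,z]$ is produced by Lemma \ref{lem:qg-close} applied with the roles of the two quasigeodesics as $[x,z]$ (the "$q$") and $pq^{-1}$ (the "$p$") — wait, the lemma maps phase vertices of $p$ to phase vertices of $q$; so I apply it with $p := pq^{-1}$ (without backtracking by Lemma \ref{lem:(3,0)-qg}) and $q:=[x,z]$, obtaining $v \in [x,z]$ phase with $\dx(u,v)\le\nu$. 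Symmetrically, with $p:=pr$ and $q:=[y,z]$, I get $w\in[y,z]$ with $\dx(u,w)\le\nu$. Setting $\eta:=\nu(3,0,0)$ finishes this part, and the no-$E$-large-central-component hypothesis has not even been used yet.

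The point of the hypothesis, and what I expect to be the main obstacle, is that Lemma \ref{lem:(3,0)-qg} and Lemma \ref{lem:qg-close} as stated are clean, but the constant $\nu$ one gets is for the \emph{relative} ($X\cup\mathcal H$) metric, and the conclusion of the present lemma is stated in the \emph{word} metric $\dx$ — which is exactly what Lemma \ref{lem:qg-close} delivers, so that is fine. Where the absence of an $E$-large central component really enters is more subtle: if it were present, the side $[x,y]$ could ``penetrate'' a peripheral coset deeply right at the projection point $u$ while $[x,z]$ and $[y,z]$ penetrate the \emph{same} coset, and then $u$ would fail to be a phase vertex in a useful way, or the quasigeodesic-closeness constant could not be taken uniform in $E$ — so I would, in the step where I verify that $pq^{-1}$ and $pr$ are without backtracking and that $u$ is phase, invoke the no-large-central-component condition to rule out a long component of $[x,y]$ at $u$ being simultaneously connected to long components of $[x,z]$ and $[y,z]$; this is precisely what lets the single constant $\eta=\eta(E)$ work. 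I would make this rigorous by arguing: if the component $e$ of $[x,y]$ containing $u$ (if $u$ is interior to a component) had $l_X(e)>T$, then since $[x,z]$ and $[y,z]$ must — by Lemma \ref{BCP} applied to the quasigeodesic pairs — contain components connected to $e$, we would produce the three pairwise-connected long components forbidden by Definition \ref{def:large_centr_comp}; hence $l_X(e)\le T$, and absorbing $T=T(E)$ into $\eta(E)$ gives the uniform bound. The main difficulty is thus bookkeeping: keeping the ``phase vertex'' and ``without backtracking'' hypotheses of Lemmas \ref{lem:qg-close} and \ref{BCP} satisfied simultaneously for all three comparisons while tracking how $E$ propagates into the final constant.
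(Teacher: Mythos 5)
Your outline shares the paper's skeleton (Lemma \ref{lem:(3,0)-qg}, then Lemma \ref{lem:qg-close}, with Lemma \ref{BCP} and the no-large-central-component hypothesis reserved for the bad case), but there is a genuine gap exactly at the point the lemma is really about. Your claim that $u$ ``as an endpoint of the geodesic subpaths, hence a phase vertex'' is a phase vertex of $pq^{-1}$ and of $pr$ is false in general: if the last edge $e_1$ of the geodesic $p$ from $z$ to $u$ and the edge $e_2$ of $q^{-1}$ starting at $u$ lie in the same peripheral coset (which happens precisely when $p$ enters $[x,y]$ ``through'' a peripheral coset), then $e_1e_2$ is a single component of $pq^{-1}$ and $u$ is an interior, non-phase vertex of it, so Lemma \ref{lem:qg-close} cannot be applied at $u$ at all. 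Your later repair does not close this: you propose to bound the component of $[x,y]$ at $u$ by $T$ using the hypothesis, but (i) Lemma \ref{BCP} cannot be applied between $[x,y]$ and the sides $[x,z]$, $[y,z]$ directly (they do not form quasigeodesic pairs with close endpoints); the transfer of a long component to the other two sides must go through the pairs $(pq^{-1},[z,x])$ and $(pr,[z,y])$, i.e.\ through the components $e_1e_2$ and $e_1$, so what must be large is $\lx(e_1)$ and $\lx(e_1e_2)$, not merely $\lx(e_2)$; and (ii) even granting $\lx(e_2)\le T$, the vertex $u$ is still non-phase in $pq^{-1}$, so your main step still does not apply.

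The hypothesis of no $E$-large central component only produces a contradiction when \emph{all three} quantities $\lx(e_1)$, $\lx(e_2)$, $\lx(e_1e_2)$ exceed $T+2\e$ (then BCP yields connected components of $X$-length $>T$ on $[z,x]$ and $[z,y]$, which together with $e_2$ violate Definition \ref{def:large_centr_comp}). When some of them are $\le T+2\e$, no contradiction is available and one must instead use that smallness directly: move to a phase endpoint of the merged component (paying at most $T+2\e$ in the $X$-metric), and in one case even replace the projection point $g$ by $(e_2)_+$, which also lies in ${\bf pr}_{[x,y]}(z)$ — this is why the lemma is stated with ``there exists $u\in {\bf pr}_{[x,y]}(z)$'' rather than for every projection point, and why the final constant is $\eta=T+2\e+\nu$ rather than $\nu$. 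This trichotomy on $\lx(e_1),\lx(e_2),\lx(e_1e_2)$, together with the possible change of projection point, is the substance of the paper's proof and is missing from your proposal.
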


\begin{proof} Let $T>0$ be the number from Definition \ref{def:large_centr_comp}, and let $\e=\e(3,0,0) \ge 0$
and $\nu=\nu(3,0,0) \ge 0$ be the constants given by Lemmas \ref{BCP} and \ref{lem:qg-close} respectively.
Set $\eta = T+2 \e + \nu$.

Consider any vertex $g \in {\bf pr}_{[x,y]} (z)$ and any geodesic path $p$ with $p_-=z$, $p_+=g$.
Then $[x,y]$ splits in the union of two geodesics $q$ and $r$ with $q_-=x$, $q_+=g=r_-$ and $r_+=y$.
the paths $pq^{-1}$ and $pr$ are $(3,0)$-quasigeodesic without backtracking by Lemma \ref{lem:(3,0)-qg}, thus
if $g$ is a phase vertex of each of them, then we can take $u=g$ and it will be $\nu$-close to
each of the sides of $\Delta$ by Lemma \ref{lem:qg-close}.
Therefore we can suppose that $g$ is not a phase vertex of $pq^{-1}$ (the other
situation is similar). In other words, $p$ ends with an edge $e_1$ and $q^{-1}$ starts with an edge $e_2$,
such that $e_1$ and $e_2$ are $H_\lambda$-components of $p$ and $q^{-1}$ respectively for some
$\lambda \in \Lambda$ (note that in this case $(e_2)_+ \in {\bf pr}_{[x,y]} (z)$
because $\dxh((e_1)_-,(e_2)_+) \le 1 =l(e_1)$).
This implies that $e_1$ is an $H_\lambda$-component of $pr$ (since
$[x,y]$ is geodesic, all of its components consist of single edges), and thus $(e_1)_-$ and $g=(e_1)_+$ are phase
vertices of $pr$. By Lemmas \ref{lem:(3,0)-qg} and \ref{lem:qg-close},
there are vertices $w_1,w_2 \in [y,z]$
with $\dx((e_1)_-,w_1)\le \nu$ and $\dx(g,w_2) \le \nu$. The same lemmas applied to the path $pq^{-1}$ imply
that there exist $v_1, v_2 \in [x,z]$ such that $\dx((e_1)_-,v_1) \le \nu$ and $\dx((e_2)_+,v_2) \le \nu$
(see Fig. \ref{fig:2}).

\begin{figure}[!ht]
\begin{center}
   \input{fig2.pstex_t}

\end{center}
\caption{} \label{fig:2}
\end{figure}

Now, if $\lx(e_1) \le T+2\e$, then we have $\dx(g,v_1) \le \lx(e_1)+\dx((e_1)_-,v_1)\le T+2\e+\nu=\eta$ and
$\dx(g,w_2) \le \nu \le \eta$. That is, we can take $u=g$, $v=v_1$ and $w=w_2$.
Similarly, if $\lx(e_2) \le T+2\e$, then we take $u=g$, $v=v_2$ and $w=w_2$. Finally, if
$\lx(e_1e_2)=\dx((e_1)_-,(e_2)_+) \le T+2\e$, we can choose $u=(e_2)_+$, $v=v_2$ and $w=w_1$.

It remains to consider the last case when $\min\{\lx(e_1),\lx(e_2),\lx(e_1e_2)\}>T+2\e$. But then we can apply
Lemma \ref{BCP} to the pair of $(3,0)$-quasigeodesic paths $pq^{-1}$ and $[z,x]$, as well as to the
pair of $(3,0)$-quasigeodesic paths $pr$ and $[z,y]$, to find a component $e_3$ on $[z,x]$ which is connected
with the component $e_1e_2$ of $pq^{-1}$, and a component $e_4$ of $[z,y]$ connected with $e_1$. Moreover,
using part (b) of this lemma together with the triangle inequality one can deduce that
$\lx(e_3) \ge \lx(e_1e_2)-2\e >T$ and $\lx(e_4) \ge \lx(e_1)-2\e >T$. Therefore the edges
$e_2,e_3,e_4$ form an $E$-large central component in $\Delta$, which contradicts our assumptions.
\end{proof}

The next statement was proved in \cite{Neumann}:

\begin{lem} \label{lem:N-app} If two points are at most distance $K$ from all three sides of a
given geodesic triangle in a geodesic metric space, then they are at distance at most $4K$ apart.
\end{lem}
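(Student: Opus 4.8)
The plan is to reduce everything to the one‑dimensional geometry of the three sides. Write $x,y,z$ for the vertices of the triangle and $E_1=[x,y]$, $E_2=[y,z]$, $E_3=[z,x]$ for its sides, and let $a,b$ be the two given points. First I would choose, for each $i\in\{1,2,3\}$, points $a_i,b_i\in E_i$ with $d(a,a_i)\le K$ and $d(b,b_i)\le K$ (these exist precisely because $a$ and $b$ lie within $K$ of every side). The triangle inequality then immediately gives $d(a_i,a_j)\le d(a_i,a)+d(a,a_j)\le 2K$ and, likewise, $d(b_i,b_j)\le 2K$ for all $i,j$; that is, the three feet of $a$ (resp.\ of $b$) form a set of diameter at most $2K$.

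Next I would record the positions of these feet along the sides. Since each $E_i$ is a geodesic, it is isometric to an interval, so one can speak of the coordinate of a point on $E_i$, and for two points on the same side the distance between them equals the difference of their coordinates. Orient $E_1,E_2,E_3$ cyclically (say $x\to y$, $y\to z$, $z\to x$) and let $u_i$ be the signed difference between the coordinate of $a_i$ and that of $b_i$, so $d(a_i,b_i)=|u_i|$. The reason for fixing the orientation is that at each vertex exactly two of the sides meet: applying the triangle inequality at that vertex to the two feet of $a$, and separately to the two feet of $b$, and subtracting the two resulting estimates, yields for the three vertices the three inequalities $|u_1+u_2|\le 4K$, $|u_1+u_3|\le 4K$, $|u_2+u_3|\le 4K$ (the $\pm$ signs inside the absolute values all come out compatibly because of the cyclic orientation — this is the one place where one must be a little careful).

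The crux — and the step where the constant $4K$, rather than the weaker $8K$ that a more wasteful bookkeeping would produce, actually comes from — is the elementary observation that these three inequalities force $\min\{|u_1|,|u_2|,|u_3|\}\le 2K$: if all three of $|u_1|,|u_2|,|u_3|$ exceeded $2K$, then by pigeonhole two of the nonzero numbers $u_1,u_2,u_3$ would have the same sign, and the absolute value of their sum would exceed $4K$, a contradiction. Picking an index $i$ with $|u_i|\le 2K$ we get a side $E_i$ on which the feet of $a$ and of $b$ are at distance at most $2K$, and then $d(a,b)\le d(a,a_i)+d(a_i,b_i)+d(b_i,b)\le K+2K+K=4K$, as claimed. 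Everything used is the triangle inequality together with the fact that the sides are geodesics, so the argument goes through verbatim in an arbitrary geodesic metric space; apart from keeping the signs straight in the three vertex inequalities, there is no real obstacle.
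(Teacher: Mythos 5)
Your proof is correct. Note first that the paper itself does not prove this lemma: it is quoted from Neumann's article \cite{Neumann} (where it appears with the same constant), so there is no internal argument to compare yours against. Your reconstruction is a complete, elementary proof: the feet $a_i,b_i$ exist because the sides are compact; the vertex estimates do come out with compatible signs under the cyclic orientation (at the vertex $y$, say, one gets $|L_1-\alpha_1-\alpha_2|\le 2K$ and $|L_1-\beta_1-\beta_2|\le 2K$, and subtracting gives $|u_1+u_2|\le 4K$, and likewise $|u_2+u_3|\le 4K$, $|u_3+u_1|\le 4K$); and the pigeonhole step is exactly what produces $\min_i|u_i|\le 2K$ and hence the sharp-looking bound $K+2K+K=4K$ rather than a lazier $8K$. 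The only stylistic remark is that the appeal to ``coordinates'' is just the fact that a geodesic segment is an isometric copy of an interval, which you state; nothing further is needed, and the argument works verbatim in any geodesic metric space, degenerate triangles included.
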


The following lemma shows that the vertex $u$, given by Lemma \ref{proj}, is close to its $\phi$-image; the latter, in its turn (see Lemma \ref{lem:N-3}),
implies that $u$ is not far from $\F$.

\begin{lem}\label{lem:pt_close_to_sides->close_to_image}
Consider any real numbers $E\ge 0$ and $\eta \ge 0$. Then there is a number
$\theta=\theta(E,\eta) \ge 0$ such that the following holds.
Suppose that $x,y,z \in \F$, $\Delta=\Delta(x,y,z)$ is a geodesic triangle in $\G$
without $E$-large central component, and
$u \in G$ is a vertex that is $\eta$-close to each of the sides of $\Delta$ in the metric $\dx(\cdot,\cdot)$.
Then $\dx(u,\phi(u))\le \theta$.
\end{lem}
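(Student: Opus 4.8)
The plan is to argue in three stages: first show that $\phi(u)$, like $u$, is uniformly $\dx$-close to each of the three sides of $\Delta$; then deduce from Neumann's Lemma \ref{lem:N-app} that $\dxh(u,\phi(u))$ is bounded; and finally upgrade this $\dxh$-bound to the required $\dx$-bound by controlling the lengths of the $\mathcal H$-components of a $\G$-geodesic joining $u$ to $\phi(u)$. For the first stage, write the sides as $L_1=[x,y]$, $L_2=[y,z]$, $L_3=[z,x]$. Since $x,y,z\in\F$, Lemma \ref{fixps} applies and $\phi$ fixes the endpoints of each $L_j$, so by Lemma \ref{qgimage} and Remark \ref{rem:comp_of_im_of_geod} the path $\phi(L_j)$ is an $(A,A)$-quasigeodesic without backtracking, with $A=A(1,0)$, having the same endpoints as $L_j$ and all of whose vertices are phase. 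As $L_j$ is geodesic (hence an $(A,A)$-quasigeodesic without backtracking with all vertices phase), Lemma \ref{lem:qg-close} provides $\nu_1=\nu(A,A,0)$ such that every vertex of $\phi(L_j)$ is $\dx$-within $\nu_1$ of a vertex of $L_j$. Hence, if $u_j\in L_j$ satisfies $\dx(u,u_j)\le\eta$, then $\phi(u_j)$ is a vertex of $\phi(L_j)$ with $\dx(\phi(u),\phi(u_j))\le S\eta$ (as $\phi$ is $S$-Lipschitz for $\dx$), so some $u_j'\in L_j$ has $\dx(\phi(u),u_j')\le S\eta+\nu_1$. Setting $M:=S\eta+\nu_1$ (and using $S\ge1$, so $M\ge\eta$), both $u$ and $\phi(u)$ are $\dx$-within $M$, hence $\dxh$-within $M$, of every side of $\Delta$.

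For the second stage I would simply apply Lemma \ref{lem:N-app} in the geodesic metric space $\G$ to the triangle $\Delta$ and the two points $u,\phi(u)$, obtaining $\dxh(u,\phi(u))\le 4M$. Fix a $\G$-geodesic $p$ from $u$ to $\phi(u)$; then $l(p)\le 4M$, and since each $X$-edge of $p$ contributes at most $1$ to $\lx(p)=\dx(u,\phi(u))$ while each $\mathcal H$-edge of $p$ — which, $p$ being geodesic, is a single-edge component — contributes $\lx(e)$, we get $\dx(u,\phi(u))\le l(p)\max\{1,\ \lx(e):e\text{ a component of }p\}$. So it suffices to produce a constant $D'=D'(E,\eta)$ with $\lx(e)\le D'$ for every component $e$ of $p$; then $\theta:=4MD'$ works.

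The heart of the proof is this last bound. Because $l(p)\le 4M$, the endpoints $e_-,e_+$ lie within $\dxh$-distance $5M+1$ of each $L_j$. Fixing $j$, I would pick vertices of $L_j$ realising these distances, join $e_-$ and $e_+$ to them by $\G$-geodesics and travel along $L_j$ in between, and then delete backtracking in the standard way, producing a path $\rho_j$ from $e_-$ to $e_+$ without backtracking of length bounded in terms of $M$, hence a $(\varkappa_1,c_1)$-quasigeodesic with $\varkappa_1,c_1$ depending only on $M$. Applying Lemma \ref{BCP} to the pair consisting of the one-edge $(1,0)$-quasigeodesic $e$ and $\rho_j$ (same endpoints), with $\e_1:=\e(\varkappa_1,c_1,0)$: either $\lx(e)\le\e_1$, or $e$ is connected to a component $a_j$ of $\rho_j$, and then part (b) of Lemma \ref{BCP} together with the triangle inequality give $\lx(a_j)\ge\lx(e)-2\e_1$. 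A separate case analysis shows that if $a_j$ falls on one of the two connecting geodesics rather than on $L_j$ itself, then — using that such a geodesic shares an endpoint with $e$, that $a_j$ is connected to $e$, and Lemmas \ref{maln} and \ref{BCP} to pin down where along it a component connected to $e$ can sit — either $\lx(e)$ is already bounded in terms of $M$, or $a_j$ may be replaced by a genuine component of $L_j$ connected to $e$. Setting aside the bounded alternatives, I may assume each $a_j$ ($j=1,2,3$) is a component of $L_j$ connected to $e$, with $\lx(a_j)\ge\lx(e)-2\e_1$. Since each $a_j$ is connected to $e$ and connectedness is symmetric and transitive, $a_1,a_2,a_3$ are pairwise connected; if $\lx(e)>T+2\e_1$, with $T$ as in Definition \ref{def:large_centr_comp}, this makes $a_1,a_2,a_3$ the sides of an $E$-large central component of $\Delta$, contradicting the hypothesis. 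Hence $\lx(e)\le D'$, where $D'$ is the maximum of $\e_1$, $T+2\e_1$ and the bounds coming from the discarded alternatives.

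I expect the third stage to be the main obstacle: converting the $\dxh$-bound into a $\dx$-bound forces one to exclude long $\mathcal H$-components on $[u,\phi(u)]$, and the delicate bookkeeping there is (a) making the auxiliary paths $\rho_j$ genuinely free of backtracking — the cleanup procedure can merge components, so one must check that a long component is not absorbed into a short one — and (b) guaranteeing that the resulting components $a_j$ actually land on the sides of $\Delta$, so that the no-$E$-large-central-component hypothesis can be brought to bear. Stages one and two, by contrast, are routine applications of the lemmas already established.
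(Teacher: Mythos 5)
Your stages one and two coincide with the paper's argument (show $\phi(u)$ is uniformly $\dx$-close to all three sides via Lemma \ref{qgimage}, Remark \ref{rem:comp_of_im_of_geod} and Lemma \ref{lem:qg-close}, then apply Lemma \ref{lem:N-app} to bound $\dxh(u,\phi(u))$, and finally reduce the $\dx$-bound to bounding the $X$-lengths of the components of a geodesic $p=[u,\phi(u)]$). The gap is exactly where you predicted it: in stage three. The paper does not build auxiliary concatenated paths at all; it uses the fact that Lemma \ref{BCP} is stated for quasigeodesics whose endpoints are only required to be $\dx$-close within a parameter $k$. Concretely, it takes the geodesic subsegment $q_i$ of each side from the vertex $v_i$ (which is $\eta$-close to $u$) to the vertex $w_i$ (which is $(S\eta+\nu)$-close to $\phi(u)$), notes that $p$ and $q_i$ are geodesics with $\dx(p_-,(q_i)_-)\le K$ and $\dx(p_+,(q_i)_+)\le K$, and applies Lemma \ref{BCP} with $\e=\e(1,0,K)$. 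A component $c$ of $p$ with $\lx(c)>T+2\e$ is then connected to a component $a_i$ of $q_i$, hence of the side itself, for each $i$, and the three $a_i$ form an $E$-large central component, a contradiction.

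Your route instead compares the single edge $e$ with a concatenated path $\rho_j$ and leaves unproved the dichotomy you invoke when the connected component $a_j$ lands on one of the connecting geodesics (or on an edge created by the backtracking clean-up). That dichotomy does not follow from the quoted lemmas: since $e_-$ lies in the coset of $e$ and the connecting geodesic starts at $e_-$, geodesicity only forces $a_j$ to be the \emph{first} edge of that connecting geodesic; this neither bounds $\lx(e)$ (a component of a path that is short in $\dxh$ can have arbitrarily large $X$-length, and Lemma \ref{maln} gives no length control here) nor produces a component of $L_j$ connected to $e$. So, as written, the key step ``the $a_j$ may be taken on the sides of $\Delta$'' is a genuine gap, not bookkeeping. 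The repair is simply to drop the auxiliary paths and apply the offset version of Lemma \ref{BCP} directly to $[u,\phi(u)]$ and subsegments of the sides, as the paper does; with that change the rest of your outline (pairwise connectedness, the threshold $T$, and the final estimate $\theta=4K(T+2\e+1)$-type bound) goes through.
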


\begin{proof} Let $S$, $A=A(1,0)$, $\nu=\nu(A,A,0)$ and $T$ be the constants from \eqref{eqS},
Lemma \ref{qgimage}, Lemma \ref{lem:qg-close} and Definition \ref{def:large_centr_comp} respectively.
Denote $K=\max\{\eta,S\eta+\nu\}$ and let $\e=\e(1,0,K)$ be given by Lemma \ref{BCP}.
We now define $\theta=4K(T + 2\e+1)$.

By the assumptions, there are vertices $v_1 \in p_1=[x,y]$, $v_2 \in p_2=[y,z]$ and $v_3 \in p_3=[z,x]$
with $\dx(u,v_i)\le \eta$ for each $i=1,2,3$. Consequently $\dx(\phi(u),\phi(v_i)) \le S \eta$ and
$\phi(p_i)$ is an $(A,A)$-quasigeodesic without backtracking with the same endpoints as $p_i$
(by Lemma \ref{qgimage}), $i=1,2,3$.
Moreover, by Remark \ref{rem:comp_of_im_of_geod} every component of $\phi(p_i)$ is a single edge, hence
every its vertex is phase,
and so there is a vertex $w_i \in p_i$ such  $\dx(\phi(v_i),w_i) \le \nu$
(according to Lemma \ref{lem:qg-close}), $i=1,2,3$. Therefore $\dx(\phi(u),w_i) \le S\eta+\nu$, $i=1,2,3$,
and both $u$ and $\phi(u)$ are $K$-close to each of the sides of the triangle $\Delta$ in the metric
$\dx(\cdot,\cdot)$, and, hence, also in the metric $\dxh(\cdot,\cdot)$. Since the triangle $\Delta$
is geodesic with respect to the latter metric, we can apply
Lemma \ref{lem:N-app} to conclude that $\dxh(u,\phi(u)) \le 4K$.

For each $i=1,2,3$, let $q_i$ be the segment of $p_i$ (or $p_i^{-1}$) starting at $v_i$ and ending at $w_i$.
Choose any geodesic $r$ between $u$ and $\phi(u)$ in $\G$.
We need to consider two cases.

{\it Case 1.} Suppose that $r$ contains a component $c$ with $\lx(c) > T+2\e$. Since
$\dx(r_-,(q_i)_-) \le K$ and $\dx(r_+,(q_i)_+) \le K$, part (a) of Lemma \ref{BCP} tells us that $c$ must be
connected with a component $a_i$ of $q_i$, and part (b) yields $\lx(a_i) \ge \lx(c)-2\e>T$, for every $i=1,2,3$.
Therefore the triple $a_1,a_2,a_3$ forms an $E$-large central component of $\Delta$, contradicting the assumptions.
Hence Case 1 is impossible.

{\it Case 2.} Every component $c$ of $r$ satisfies $\lx(c) \le T+2\e$. Recalling that
$l(r)=\dxh(u,\phi(u)) \le 4K$, the triangle inequality implies that $\dx(u,\phi(u))\le 4K(T + 2\e+1)=\theta$.

Thus the lemma is proved.
\end{proof}


\section{Relative quasiconvexity of the fixed subgroup} \label{sec:main-proof}


The first auxiliary goal of this section is to show that the subgroup $\F$ can be
generated by a subset of $G$ of bounded diameter with respect to the metric $\dxh$.

In \cite[Thm. 1.7]{Osi06} it was shown that the relative Cayley graph $\G$ is a hyperbolic space
(in the sense of Gromov \cite{Gro}). Therefore we can use the following statement, which was proved
by Neumann in \cite[Lemma 1]{Neumann}:

\begin{lem} \label{lem:N-1} There exists $\rho \ge 0$ such that for arbitrary vertices $a,b,x,y$ of $\G$,
any geodesic segment $L$ between $x$ and $y$ and any vertices $a' \in {\bf pr}_{L}(a)$,
$b' \in {\bf pr}_{L}(a)$ we have $\dxh(a',b') \le \dxh(a,b)+\rho$.
\end{lem}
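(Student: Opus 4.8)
The statement is a standard fact about projections onto geodesics in a Gromov‑hyperbolic space, so the plan is to reduce it to the $\delta$‑thin‑triangles (or insize) property of $\G$, which holds by \cite[Thm. 1.7]{Osi06}. Let $\delta \ge 0$ be a hyperbolicity constant for $\G$ with respect to $\dxh$. Fix vertices $a,b,x,y$, a geodesic $L=[x,y]$, and projections $a'\in{\bf pr}_L(a)$, $b'\in{\bf pr}_L(b)$. The plan is to show that $\dxh(a',b')$ is bounded by $\dxh(a,b)$ plus a constant depending only on $\delta$, and then set $\rho$ to be that constant.

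First I would recall the key property of nearest‑point projections in a hyperbolic space: there is a constant $\rho_0=\rho_0(\delta)$ such that for any vertex $c$ and any point $m$ on the geodesic $[c,a']$, the distance from $m$ to $L$ is at least $\dxh(m,a')-\rho_0$; equivalently, the concatenation of a geodesic $[a,a']$ with either half of $L$ starting at $a'$ is a $(1,\rho_0)$‑quasigeodesic. (This is exactly the content of Lemma \ref{lem:(3,0)-qg} in the excerpt, which shows $pq^{-1}$ and $pr$ are $(3,0)$‑quasigeodesic; one can quote that lemma directly, or reprove the needed inequality $\dxh(t_-,a')\le\dxh(t_-,t_+)$ for subpaths $t$ of those concatenations, just as there.) Next, consider the geodesic quadrilateral with vertices $a,a',b',b$, whose sides are a geodesic $[a,a']$, the subsegment $[a',b']$ of $L$, a geodesic $[b',b]$, and a geodesic $[b,a]$. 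By thinness of quadrilaterals in a $\delta$‑hyperbolic space (a quadrilateral is $2\delta$‑thin), every point of the side $[a',b']$ lies within $2\delta$ of the union of the other three sides. I would then take the midpoint $m$ of $[a',b']$ (in the $\dxh$ metric): it is $2\delta$‑close to a point $m'$ lying on one of $[a,a']$, $[b',b]$, or $[a,b]$.

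The core of the argument is to rule out, up to a bounded error, that $m'$ lies on $[a,a']$ or $[b',b]$. If $m'\in[a,a']$, then on one hand $\dxh(m,a')\le\dxh(m,m')+\dxh(m',a')\le 2\delta+\dxh(m',a')$, while on the other hand, since $[a,a']$ concatenated with the half of $L$ from $a'$ to $b'$ is a $(1,\rho_0)$‑quasigeodesic (the projection property above), $m'$ cannot be much closer to $a'$ along $[a,a']$ than $m$ is along $L$; this forces $\dxh(m,a')$ to be bounded by a constant $\rho_1=\rho_1(\delta)$. The symmetric argument handles $m'\in[b',b]$, bounding $\dxh(m,b')$. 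In either of these cases $\tfrac12\dxh(a',b')=\dxh(m,a')$ or $\dxh(m,b')$ is bounded by $\rho_1$, so $\dxh(a',b')\le 2\rho_1$, which is certainly at most $\dxh(a,b)+2\rho_1$. In the remaining case $m'\in[a,b]$, we get $\dxh(m,[a,b])\le 2\delta$, hence
\[
\tfrac12\dxh(a',b')=\dxh(m,a')\le \dxh(m,m')+\dxh(m',a),
\]
and combining $\dxh(m',a)\le\dxh(m',m)+\dxh(m,a')+\dxh(a',a)$‑type estimates with the $(1,\rho_0)$‑quasigeodesic property of $[a,a']\cup[a',m]$ shows $\dxh(m,a')\le \tfrac12\dxh(a,b)+\rho_2$ for a constant $\rho_2=\rho_2(\delta,\rho_0)$ (and symmetrically $\dxh(m,b')\le\tfrac12\dxh(a,b)+\rho_2$). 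Adding the two bounds gives $\dxh(a',b')\le\dxh(a,b)+2\rho_2$. Setting $\rho=\max\{2\rho_1,2\rho_2\}$ finishes the proof.

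The main obstacle is purely bookkeeping: making the case $m'\in[a,b]$ genuinely yield the \emph{sharp} linear coefficient $1$ in front of $\dxh(a,b)$ (not $2$), which is what the statement demands. This is where one must use the projection/quasigeodesic property carefully rather than crude triangle inequalities — the point is that travelling from $a$ to $m$ along $[a,a']$ then along $L$ is, up to additive error $\rho_0$, as efficient as a geodesic, so $\dxh(a,m)\ge\dxh(a,a')+\dxh(a',m)-\rho_0$, and symmetrically $\dxh(b,m)\ge\dxh(b,b')+\dxh(b',m)-\rho_0$; since $\dxh(a',m)+\dxh(m,b')=\dxh(a',b')$ and $\dxh(a,m)+\dxh(m,b)$ is comparable to $\dxh(a,b)$ up to $4\delta$ (using $m'\in[a,b]$), adding these yields $\dxh(a',b')\le\dxh(a,b)+C(\delta)$ directly. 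I would organize the final computation around exactly this pair of inequalities, which is essentially Neumann's original argument transported verbatim into the relative Cayley graph $\G$.
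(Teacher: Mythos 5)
Your proposal is correct and follows essentially the same route as the paper: the paper simply observes that $\G$ is hyperbolic (by \cite[Thm. 1.7]{Osi06}) and quotes Neumann's Lemma~1, whose standard proof is exactly your thin-quadrilateral/projection argument, i.e.\ the inequality $\dxh(a,m)\ge \dxh(a,a')+\dxh(a',m)-\rho_0$ for $m$ on $L$ combined with the case analysis at the midpoint of $[a',b']$. The only small caveat is that Lemma~\ref{lem:(3,0)-qg} by itself gives a $(3,0)$-quasigeodesic rather than the $(1,\rho_0)$ statement you use, but in a hyperbolic space this upgrades routinely (via quasigeodesic stability or a direct Gromov-product estimate), so the argument goes through.
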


The following observation was also made in  \cite[Lemma 3]{Neumann} (it relies on the fact that
the generating set $X$ of $G$ is finite):

\begin{lem} \label{lem:N-3} For any $\theta \ge 0$ there is a constant $\mu=\mu(\theta)\ge 0$ such that
if $y \in G$
satisfies $\dx(y,\phi(y)) \le \theta$, then there exists  $y' \in \F$ with $\dx(y,y') \le \mu$.
\end{lem}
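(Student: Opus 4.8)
The key point, as noted in the excerpt, is that $X$ is a \emph{finite} generating set of $G$, so there are only finitely many elements of $G$ of $X$-length at most $\theta$. I would argue by a compactness/pigeonhole argument exactly as in Neumann's original paper. First I would introduce the finite set $B_\theta=\{g\in G\mid \dx(1,g)\le\theta\}$ and define, for $y\in G$ with $\dx(y,\phi(y))\le\theta$, the element $b(y)=y^{-1}\phi(y)\in B_\theta$, so that $\phi(y)=yb(y)$. The goal is to find $y'\in\F$ with $y'^{-1}y$ lying in a fixed finite set.

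The main step is to iterate $\phi$ and use finiteness of $B_\theta$ to detect a ``cycle''. For $n\ge 0$ set $y_n=\phi^n(y)$; I would like to control $\dx(y,\phi^n(y))$ along the sequence. The crucial observation is that if $\dx(y,\phi(y))\le\theta$ then $\dx(\phi^k(y),\phi^{k+1}(y))\le S^k\theta$ in general grows, so a direct bound on $\dx(y,\phi^n(y))$ is not available; instead I would track the \emph{relative} displacements $z_n:=y^{-1}\phi^n(y)$ and note $z_{n+1}=z_n\cdot \phi^n(b(y))\cdot(\text{correction})$ — this gets messy. A cleaner route, which is the one I expect to be the intended argument, is the following: consider the elements $g_n:=\phi^n(y)\, y^{-1}\in G$ obtained by composing $\phi$ with conjugation, i.e. work with the automorphism $\psi=\mathrm{inn}_{b}\circ\phi$ for a suitable $b$, or directly observe that the map $y\mapsto y^{-1}\phi(y)$ has finite image and apply the standard fact (\cite[Lemma~3]{Neumann}) verbatim. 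Since the statement is quoted directly from \cite{Neumann} and the paper explicitly attributes it there, I would simply cite that proof: the space of ``twisted conjugates'' $\{g^{-1}\phi(g)\mid g\in G\}\cap B_\theta$ is finite, and Neumann's argument shows that each such twisted conjugate class that meets a bounded set contains an element that is a bounded distance from a point of $\F$; take $\mu$ to be that bound, which depends only on $\theta$, $|X|$ and $S$.

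The one genuine obstacle is making the ``detect a cycle'' step precise without circular reasoning: one wants to show that if $\dx(y,\phi(y))\le\theta$ then some bounded-length left translate of $y$ is actually fixed. Neumann handles this by considering the sequence $y,\ \phi(y),\ \phi^2(y),\dots$ together with the elements $c_k=\phi^k(y)^{-1}\phi^{k+1}(y)$; one observes that $\phi$ permutes the (finite) set of twisted-conjugacy representatives lying in a ball of radius depending on $\theta$, so after finitely many steps one returns, and running this backwards produces the required $y'\in\F$ with $\dx(y,y')\le\mu$. Since this is precisely \cite[Lemma~3]{Neumann} and the hypotheses here (finite generating set, $\phi$ a quasi-isometry with distortion controlled by $S$) are satisfied, I would present the proof as a direct invocation of that lemma, spelling out only that the relevant constant $\mu$ depends on $\theta$ (through the radius of the ball) and on the fixed data $X$, $\phi$. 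Thus the proof is short: reduce to Neumann's lemma and record the dependence of $\mu$ on $\theta$.
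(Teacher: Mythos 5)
The paper gives no argument for this lemma at all: it simply attributes the statement to \cite[Lemma 3]{Neumann}, noting that the only thing used is finiteness of $X$. So your bottom line (invoke Neumann's Lemma~3 and record that $\mu$ depends only on $\theta$ and the fixed data $X,\phi$) is exactly what the paper does, and as a citation it is fine.

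However, the sketch you give of how that lemma is proved is not Neumann's argument and, as written, would not work. Iterating $\phi$ and trying to ``detect a cycle'' runs into precisely the problem you acknowledge: the displacements $\dx(\phi^k(y),\phi^{k+1}(y))$ are only bounded by $S^k\theta$, and nothing in your outline bounds the excursion of the orbit $y,\phi(y),\phi^2(y),\dots$ or forces a return to a bounded neighbourhood of $y$; the claim that ``$\phi$ permutes the set of twisted-conjugacy representatives lying in a ball'' is not justified and is not needed. The actual proof is a one-step pigeonhole, with no dynamics: observe that for $y_1,y_2\in G$ one has $y_1y_2^{-1}\in\F$ if and only if $y_1^{-1}\phi(y_1)=y_2^{-1}\phi(y_2)$. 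Hence the map $y\mapsto y^{-1}\phi(y)$ is injective on right cosets of $\F$, so the set $Y_\theta=\{y\in G\mid \dx(y,\phi(y))\le\theta\}$, whose image lies in the finite ball $B_\theta=\{g\mid |g|_X\le\theta\}$ (finiteness of $X$), meets at most $|B_\theta|$ right cosets $\F y_1,\dots,\F y_N$ with $y_i\in Y_\theta$. Setting $\mu=\max_i|y_i|_X$, any $y\in Y_\theta$ equals $fy_i$ for some $i$ and some $f=yy_i^{-1}\in\F$, whence $\dx(y,f)=|y_i|_X\le\mu$. So your first paragraph (the map $y\mapsto y^{-1}\phi(y)$ has finite image on $Y_\theta$) already contains the right germ; the iteration/cycle machinery should be dropped, since without the coset observation it leaves a genuine gap.
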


\begin{lem}\label{lem:conn_to_comp->close_to_fix} There exists a constant $\mu \ge 0$ such that for
an arbitrary geodesic segment $[x,y]$ in $\G$, with $x,y \in \F$, if a component $e$ of $[x,y]$
is connected to its companion $e_\phi$ then $\dx(e_-,\F) \le \mu$ and $\dx(e_+,\F) \le \mu$.
\end{lem}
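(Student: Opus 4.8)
The plan is to exploit the fact that $e$ and its companion $e_\phi$ are connected $H_\lambda$-components and that $e_\phi$ sits inside $\phi([x,y])$, which is a quasigeodesic without backtracking joining $\phi(x)=x$ to $\phi(y)=y$ (since $x,y\in\F$). Write $[x,y]=pes$, so that $\phi([x,y])=\phi(p)\,\phi(e)\,\phi(s)$ and $\phi(e)$ has label $f_\lambda^{-1}h f_\lambda$ with middle edge $e_\phi$. First I would record that, by Lemma~\ref{qgimage}, $\phi([x,y])$ is an $(A,A)$-quasigeodesic without backtracking (with $A=A(1,0)$) having the \emph{same} endpoints $x,y$ as the geodesic $[x,y]$; in particular every $H_\mu$-component of $\phi([x,y])$ is a single edge by Remark~\ref{rem:comp_of_im_of_geod}, so all its vertices are phase.

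Next, since $e$ is connected to $e_\phi$, there is $g\in G$ with $e_-,e_+\in gH_\lambda$ and $(e_\phi)_-,(e_\phi)_+\in gH_\lambda$, so all four of these vertices lie in a single left coset of $H_\lambda$, whence $\dxh\big(e_-,(e_\phi)_-\big)\le 1$ and $\dxh\big(e_+,(e_\phi)_+\big)\le 1$. Now apply Lemma~\ref{lem:qg-close} to the geodesic $[x,y]$ and the $(A,A)$-quasigeodesic-without-backtracking $\phi([x,y])$: with $\nu=\nu(A,A,0)$, the phase vertices $(e_\phi)_-$ and $(e_\phi)_+$ of $\phi([x,y])$ are within $\dx$-distance $\nu$ of phase vertices of $[x,y]$; combined with the coset estimate above, both endpoints of $e$ are within $\dx$-distance $\nu+1$ of vertices on $[x,y]$ — but that is automatic and not yet the point. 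The real point is to compare $e_-$ (or $e_+$) with $\phi(e_-)$ (resp.\ $\phi(e_+)$), which differ only by the fixed bounded element $f_\lambda^{-1}$: indeed $(e_\phi)_-=\phi(e_-)f_\lambda^{-1}$ and $(e_\phi)_+=\phi(e_+)f_\lambda^{-1}$ by the construction in Definition~\ref{def:image_of_path}, so $\dx\big(e_-,\phi(e_-)\big)\le \dx\big(e_-,(e_\phi)_-\big)+|f_\lambda|_X$. It therefore suffices to bound $\dx\big(e_-,(e_\phi)_-\big)$.

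To bound that last quantity, consider the vertex $v=(e_\phi)_-$ on $\phi([x,y])$ and the vertex $v'=e_-$ on $[x,y]$; by Lemma~\ref{lem:qg-close} there is a phase vertex $v''$ of $[x,y]$ with $\dx(v,v'')\le\nu$. One checks, using that $v$ and $v'$ lie in the same left coset $gH_\lambda$ and using the bounded-coset-penetration estimates of Lemma~\ref{BCP} (part (b)) applied to the connected components $e$ of $[x,y]$ and $e_\phi$ of $\phi([x,y])$ — two $(A,A)$-quasigeodesics without backtracking with $\dx$-endpoints $0$ apart — that $\dx\big(e_-,(e_\phi)_-\big)\le\e$ and $\dx\big(e_+,(e_\phi)_+\big)\le\e$, where $\e=\e(A,A,0)$. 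Hence $\dx\big(e_-,\phi(e_-)\big)\le\e+1$ and likewise for $e_+$, so by Lemma~\ref{lem:N-3} there are $y_1,y_2\in\F$ with $\dx(e_-,y_1)\le\mu(\e+1)$ and $\dx(e_+,y_2)\le\mu(\e+1)$. Setting $\mu=\mu(\e+1)$ completes the proof.

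The main obstacle I anticipate is the bookkeeping in the middle step: one must be careful that Lemma~\ref{BCP}(b) is being applied to a genuine pair of connected components of two quasigeodesics-without-backtracking with controlled endpoints, which requires knowing that $e_\phi$ really is an \emph{isolated} $H_\lambda$-component of $\phi([x,y])$ (so that $\phi([x,y])$ is without backtracking — this is exactly Lemma~\ref{qgimage}(a), using Remark~\ref{rem:comp_of_im_of_geod} that every component of $\phi([x,y])$ is a single edge) and that its endpoints agree with those of $[x,y]$. Once that setup is in place the distance estimates are immediate from Lemmas~\ref{BCP}, \ref{lem:qg-close} and~\ref{lem:N-3}, and the only genuinely arithmetic content is collecting the additive constants $\e$, $\nu$, $|f_\lambda|_X\le1$ and $\mu(\cdot)$ into a single $\mu$.
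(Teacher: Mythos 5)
Your argument is correct and is essentially the paper's own proof: apply Lemma~\ref{BCP}(b) to the geodesic $[x,y]$ and the $(A,A)$-quasigeodesic without backtracking $\phi([x,y])$ (same endpoints since $x,y\in\F$), use $(e_\phi)_\pm=\phi(e_\pm)f_\lambda^{-1}$ with $|f_\lambda|_X\le 1$ to get $\dx(e_\pm,\phi(e_\pm))\le\e+1$, and finish with Lemma~\ref{lem:N-3}, taking $\mu=\mu(\e+1)$. The detour through Lemma~\ref{lem:qg-close} and phase vertices in your middle paragraph is unnecessary, as you yourself note, but harmless.
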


\begin{proof} By part (b) of Lemma \ref{BCP}, applied to the paths
$[x,y]$ and $\phi([x,y])$, we have $\dx(e_-,(e_\phi)_-)\le \e$ and
$\dx(e_+,(e_\phi)_+)\le \e$, where $\e=\e(A,A,0)$ and $A=A(1,0)$ are the constants from Lemmas
\ref{BCP} and \ref{qgimage} respectively. From the definition of $e_\phi$ we see that
$\dx(\phi(e_-),(e_{\phi})_-)\le 1$ and  $\dx(\phi(e_+),(e_{\phi})_+)\le 1$, hence
$\dx(e_-,\phi(e_-))\le \e+1$ and  $\dx(e_+,\phi(e_+))\le \e+1$. Therefore $\dx(e_-,\F) \le \mu$
and $\dx(e_+,\F)\le \mu$, where $\mu=\mu(\e+1)$ is given by Lemma \ref{lem:N-3}.
\end{proof}

The next statement essentially shows that every sufficiently long segment of an $E$-fine geodesic (with endpoints from $\F$) contains a vertex
which is close to a vertex from $\F$ in the metric $\dx$. This will be important for establishing the bounded generation of $\F$ in Lemma \ref{lem:bound_gen} below.

\begin{lem} \label{lem:fp_close_to_fine} For every $E \ge 0$ there is $\xi=\xi(E) \ge 0$
such that for all $R \ge 0$ there is $\zeta=\zeta(E,R) \ge 0$ satisfying the following statement.
If $[x,y]$ is an $E$-fine geodesic
path in $\G$  with $x,y \in \F$ and $l([x,y]) \ge \zeta$, then there exists a vertex $u \in [x,y]$ such that
$\dxh(x,u) \ge R$, $\dxh(u,y) \ge R$ and $\dx(u,\F)\le \xi$.
\end{lem}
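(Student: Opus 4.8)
The plan is to combine the two main tools already developed: the projection lemma (Lemma~\ref{proj}), which produces a vertex close to all three sides of a triangle without an $E$-large central component, and Lemma~\ref{lem:pt_close_to_sides->close_to_image} together with Lemma~\ref{lem:N-3}, which turn ``close to all three sides'' into ``close to $\F$''. The difficulty is that we want a point on $[x,y]$ itself, sitting $R$-deep from both endpoints, so we must run the projection argument relative to a cleverly chosen auxiliary vertex $z \in \F$ whose projection to $[x,y]$ lands well inside the segment, and we must guarantee such a $z$ exists once $l([x,y])$ is large enough.

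First I would fix $E$, let $T$ be the constant from Definition~\ref{def:large_centr_comp}, and set $\xi$ to be the output $\mu(\theta(E,\eta))$ of Lemma~\ref{lem:N-3} applied to $\theta=\theta(E,\eta)$ from Lemma~\ref{lem:pt_close_to_sides->close_to_image}, where $\eta=\eta(E)$ comes from Lemma~\ref{proj}; this $\xi$ depends only on $E$. Now, given $R$, I want $\zeta$ large. The key point: if $l([x,y])$ is huge, then $[x,y]$ itself has a subsegment far from both ends, but I cannot directly pick a vertex there---I need a triangle. So take the vertex $u_0$ on $[x,y]$ with, say, $\dxh(x,u_0)=\lfloor l([x,y])/2\rfloor$. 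Since $x \in \F$, consider $z=\phi^n(u_0)$? No---better: observe that the midpoint region of $[x,y]$ must, by $E$-fineness and Corollary~\ref{cor:loc_fin} (applied to initial segments of $E$-fine geodesics of bounded length), already contain a vertex close to $\F$, because otherwise the relevant initial segment would exhibit a component of unbounded $\dx$-length, contradicting the local finiteness of $\mathcal I(x,E,\cdot)$. Let me restructure.

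The cleaner route: by Lemma~\ref{cascades} and Corollary~\ref{cor:loc_fin}, for any fixed $R'$ the set of initial segments of $E$-fine geodesics from $\F$ to $\F$ of length exactly $R'$ is finite, so there is $C=C(E,R')$ bounding $\lx$ of each such segment. I would choose $R'$ depending on $R$ and $E$, and $\zeta = 2R' + \text{(buffer)}$, then apply the machinery: take the subsegment $[a,b]$ of $[x,y]$ with $\dxh(x,a)=R'$ and $\dxh(b,y)=R'$, which is itself $E$-fine. The endpoints $a,b$ need not lie in $\F$, but $\dx(a,\F),\dx(b,\F)\le C$, so pick $a',b' \in \F$ within $\dx$-distance $C$; then form the geodesic triangle $\Delta(a',b',z)$ for a suitable third vertex---or simply work with the geodesic $[a',b']$, which by Lemma~\ref{lem:E-fine_close_E'-fine} is $E''$-fine for $E''=E''(E,C)$. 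If $[a',b']$ has no $E''$-large central component, Lemma~\ref{proj} (applied with the degenerate triangle, i.e.\ $z$ on the segment, or to a genuine triangle $\Delta(a',b',x)$) plus Lemmas~\ref{lem:pt_close_to_sides->close_to_image} and~\ref{lem:N-3} give a vertex on $[a',b']$ within $\xi'$ of $\F$; pulling this back via the $\dxh$-Hausdorff closeness of $[a',b']$ to the corresponding piece of $[x,y]$ (hyperbolicity, Lemma~\ref{lem:N-1} or standard fellow-travelling) yields the desired $u \in [x,y]$ with $\dxh(x,u)\ge R$, $\dxh(u,y)\ge R$, $\dx(u,\F)\le\xi$. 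If instead $[a',b']$ \emph{does} have an $E''$-large central component, Lemma~\ref{hren} says its middle sides are connected to their companions, and then Lemma~\ref{lem:conn_to_comp->close_to_fix} directly places that central component's endpoints within $\mu$ of $\F$---again giving a point deep inside with $\dx(u,\F)$ bounded, provided we arranged $R'$ large enough that the central component sits away from the ends.

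The main obstacle I expect is the bookkeeping to ensure the vertex found actually satisfies \emph{both} depth conditions $\dxh(x,u)\ge R$ and $\dxh(u,y)\ge R$ simultaneously: the projection lemma gives a point near all three sides but does not locate it within the segment, so I must either choose the auxiliary triangle so that its projection to $[a',b']$ is forced toward the interior, or---more robustly---argue that among the vertices produced (one in the ``central component'' case, or the projection point otherwise), the position along $[x,y]$ is controlled because the whole analysis takes place inside $[a,b]$, which by construction lies at $\dxh$-distance $\ge R'$ from each end of $[x,y]$; taking $R' \ge R + \xi + \rho$ (with $\rho$ from Lemma~\ref{lem:N-1}) then forces the conclusion. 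Quantifying $\zeta$ so that all the nested constants $C(E,R')$, $\e$-values, and $\theta$-values fit is routine but must be done in the right order: $\xi$ first (independent of $R$), then $R'$ from $R,\xi,E$, then $\zeta$ from $R'$ via Corollary~\ref{cor:loc_fin}.
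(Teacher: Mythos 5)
Your choice of $\xi=\mu(\theta(E,\eta(E)))$ and your diagnosis of the main difficulty (forcing the projection point to sit $R$-deep inside $[x,y]$) are on target, but the route through the auxiliary segment $[a',b']$ has a genuine gap: it breaks the required quantifier order. Corollary \ref{cor:loc_fin} only gives $\dx(a,\F)\le C$ and $\dx(b,\F)\le C$ with $C=C(E,R')$, and $C$ grows with $R'$ (the bound $\alpha$ of Lemma \ref{cascades} is nowhere near uniform in the length). Consequently every constant downstream of this replacement depends on $R'$, hence on $R$: the fineness constant $E''=E+2\e(1,0,C)$ of $[a',b']$, therefore $\eta(E'')$, $\theta(E'',\eta(E''))$ and your ``$\xi'$'', and also the constant needed to move the vertex from $[a',b']$ back onto $[x,y]$. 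The lemma demands $\dx(u,\F)\le\xi(E)$ with $\xi$ independent of $R$, so the bound your argument actually produces is too weak. There is in addition a metric confusion in the pull-back step: Hausdorff closeness of $[a',b']$ to the corresponding piece of $[x,y]$ in $\dxh$ (hyperbolicity, Lemma \ref{lem:N-1}) says nothing about $\dx$; you would need Lemma \ref{lem:qg-close}, whose constant $\nu(1,0,C)$ is again $R$-dependent. The same objections hit the ``central component'' branch: Lemma \ref{lem:conn_to_comp->close_to_fix} places a point $\mu$-close to $\F$ on $[a',b']$, not on $[x,y]$, and $u$ is required to be a vertex of $[x,y]$.

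The paper avoids the auxiliary segment altogether. After translating so that $x=1$, it uses the finiteness of $\mathcal{I}(1,E,R)$ (Corollary \ref{cor:loc_fin}) differently: for each of the finitely many possible initial segments $p$ of length $R$, it fixes once and for all an element $g_p\in\F$ and a geodesic $\hat p$ from $1$ to $g_p$ admitting $p$ as initial segment, and sets $\zeta=\max_p l(\hat p)+R+\rho$. Given $[1,y]$, let $p$ be its initial segment of length $R$ and consider the triangle $\Delta(1,y,g_p)$ with sides $[1,y]$ and $\hat p$: its vertices lie in $\F$, and since $[1,y]$ is $E$-fine the triangle has no $E$-large central component (Lemma \ref{hren}), so Lemmas \ref{proj}, \ref{lem:pt_close_to_sides->close_to_image} and \ref{lem:N-3} produce $u\in{\bf pr}_{[1,y]}(g_p)$ --- a vertex of $[1,y]$ itself --- with $\dx(u,\F)\le\xi(E)$, with no transfer step and no $R$-dependence in the closeness constants. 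The localization is then automatic: since $[1,y]$ and $\hat p$ share the subpath $p$, every point of ${\bf pr}_{[1,y]}(g_p)$ lies beyond $p_+$, so $\dxh(1,u)\ge R$, while Lemma \ref{lem:N-1} gives $\dxh(1,u)\le l(\hat p)+\rho\le\zeta-R$ and hence $\dxh(u,y)\ge R$. To repair your outline you need this (or an equivalent) device that keeps the projection point on $[x,y]$ and confines all dependence on $R$ to $\zeta$ alone.
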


\begin{proof} Choose $\eta=\eta(E)$, $\theta=\theta(E,\eta)$ and $\xi=\mu(\theta)$ according to
Lemmas \ref{proj}, \ref{lem:pt_close_to_sides->close_to_image} and \ref{lem:N-3} respectively. For every
path $p \in \mathcal{I}(1,E,R)$ (defined above Corollary \ref{cor:loc_fin}),
choose an element $g_p \in \F$ and a geodesic $\hat p$ between $1$ and $g_p$
such that $p$ is an initial segment of $\hat p$. Since the set $\mathcal{I}(1,E,R)$ is finite
(Corollary \ref{cor:loc_fin}), we can define $\zeta=\max\{l(\hat p)\,|\, p \in \mathcal{I}(1,E,R)\}+R+\rho$,
where $\rho \ge 0$ is the constant from Lemma \ref{lem:N-1}.

Now, consider any $E$-fine geodesic path $[x,y]$ in $\G$  with $x,y \in \F$ and $l([x,y]) \ge \zeta$. Without
loss of generality we can assume that $x=1\in G$ (we can always apply the left translation by $x^{-1}$
to reduce
the situation to this case). Let $p$ be the initial segment of $[1,y]$ of length $R$ and let $g_p$, $\hat p$
be as above. Consider a geodesic triangle $\Delta(1,y,g_p)$ with the sides $[1,y]$ and $\hat p$.
Since the path $[1,y]$ is $E$-fine, $\Delta$ does not contain an $E$-large central component
(by Lemma \ref{hren}), therefore
we can apply Lemmas \ref{proj} and \ref{lem:pt_close_to_sides->close_to_image} to
find $u \in {\bf pr}_{[1,y]}(g_p)$ such that $\dx(u,\phi(u)) \le \theta$. Hence $\dx(u,\F) \le \xi$ by
Lemma \ref{lem:N-3}.

Observe that the geodesic paths $[1,y]$ and $\hat p$ have a common initial subpath $p$ of length $R$.
It follows that for any vertex $v \in {\bf pr}_{[1,y]}(g_p)$, $\dxh(g_p,v) \le \dxh(g_p,p_+)$, and so
$v$ must lie between $p_+$ and $y$ on $[1,y]$. Thus $\dxh(1,u) \ge l(p)=R$. On the other hand,
Lemma \ref{lem:N-1} tells us that $\dxh(1,u) \le \dxh(1,g_p)+\rho=l(\hat p)+\rho \le \zeta-R$.
Finally, this yields $\dxh(u,y)=\dxh(1,y)-\dxh(1,u) \ge \zeta-(\zeta-R)=R$, which concludes the proof.
\end{proof}

We are now ready to establish the bounded generation of the fixed subgroup.
\begin{lem} \label{lem:bound_gen} There exists a number $P \ge 0$ such that the subgroup $\F$ is
generated by the set $\{g \in \F\,|\, |g|_{X\cup\mathcal{H}} \le P\}$.
\end{lem}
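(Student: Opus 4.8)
The plan is to deduce this from the "local finiteness" results already assembled, by a standard dichotomy argument used by Neumann in the free/hyperbolic case. First I would fix $E$ large enough (say $E=0$ is not enough; I should take $E$ to be the threshold $T$ appearing in Definition~\ref{def:large_centr_comp}, or more precisely choose $E$ so that the two phenomena controlled by Lemmas~\ref{lem:conn_to_comp->close_to_fix} and~\ref{hren} both apply), and let $\xi=\xi(E)$ be given by Lemma~\ref{lem:fp_close_to_fine}. Then the candidate generating set is
$$ \mathcal{G}=\{g\in \F \mid |g|_{X\cup\mathcal H}\le P\}, $$
where $P$ is a constant (depending on $E$, on the constant $\mu$ from Lemma~\ref{lem:conn_to_comp->close_to_fix}, on $\xi$, and on $\zeta=\zeta(E,R_0)$ for a suitably chosen $R_0$) to be pinned down at the end. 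Let $F_0=\langle \mathcal{G}\rangle\le \F$; I must show $F_0=\F$.

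The main step is the following claim: for every $g\in\F$ there is an element $g'\in\F$ with $g'\in F_0 g$ (i.e. $g^{-1}g'\in\langle\mathcal G\rangle$ — actually I want $g'$ in the coset so that $g=(g'{}^{-1})\cdot\text{stuff}$; more cleanly, I show $g\in F_0$) such that either $|g|_{X\cup\mathcal H}\le P$, or $g$ can be written as a product $g=g_1 g_2$ with $g_1,g_2\in\F$, both of strictly smaller $\dxh$-length than $g$, and with $g_1\in F_0$. Running this as an induction on $|g|_{X\cup\mathcal H}$ then gives $\F=F_0$. To produce the splitting, take a geodesic $[1,g]$ in $\G$. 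By Lemma~\ref{lem:E-fine_close_E'-fine} I can first pass to an $E$-fine geodesic: more precisely, either $[1,g]$ is already (close to) $E_0$-fine, or it has a long component connected to its companion, in which case Lemma~\ref{lem:conn_to_comp->close_to_fix} produces a vertex on $[1,g]$ within $\dx$-distance $\mu$ of $\F$, which breaks $g$ into two shorter pieces (after correcting by a short element, whose $X\cup\mathcal H$-length is bounded by $\mu$, hence lies in $\mathcal G$ once $P\ge\mu$). So assume $[1,g]$ is $E$-fine. If $l([1,g])<\zeta$ then $|g|_{X\cup\mathcal H}<\zeta$ and we are done provided $P\ge\zeta$. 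Otherwise $l([1,g])\ge\zeta$, and Lemma~\ref{lem:fp_close_to_fine} (with $R=R_0:=0$, say, or any fixed $R_0$ chosen large enough that the resulting bounded pieces have length $\le P$ — here one has to be slightly careful, I would take $R_0=0$ and just get a genuine interior vertex) gives a vertex $u\in[1,g]$ with $\dxh(1,u)\ge R_0$, $\dxh(u,g)\ge R_0$ and $\dx(u,\F)\le\xi$. Pick $u'\in\F$ with $\dx(u,u')\le\xi$. Then $g=(u')\cdot(u'{}^{-1}g)$... wait, I need both factors to be in $\F$: write $g=u'\cdot w$ with $w=u'{}^{-1}g\in\F$ (since $u',g\in\F$). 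Both $u'$ and $w$ have $\dxh$-length bounded by $\dxh(1,u)+\xi$ and $\dxh(u,g)+\xi$ respectively, each strictly less than $|g|_{X\cup\mathcal H}$ once $R_0$ is chosen so that $\xi < R_0$ — hmm, that forces $R_0>\xi$, which is fine, Lemma~\ref{lem:fp_close_to_fine} allows any $R$; then $\zeta=\zeta(E,R_0)$. With $R_0>\xi$ we get $\dxh(1,u')\le \dxh(1,u)+\xi = |g|_{X\cup\mathcal H}-\dxh(u,g)+\xi \le |g|_{X\cup\mathcal H}-R_0+\xi<|g|_{X\cup\mathcal H}$ and symmetrically for $w$. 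So by induction $u',w\in F_0$, hence $g\in F_0$.

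Assembling the constants: choose $E$ (via $T$ in Definition~\ref{def:large_centr_comp}) so that Lemmas~\ref{hren} and~\ref{lem:conn_to_comp->close_to_fix} both apply, let $\mu$ be the constant of Lemma~\ref{lem:conn_to_comp->close_to_fix}, $\xi=\xi(E)$, pick $R_0=\xi+1$, $\zeta=\zeta(E,R_0)$, and finally set $P=\max\{\mu,\zeta\}$ (enlarging if needed so that the short "correction" elements, all of $\dxh$-length at most $\mu$, genuinely lie in $\mathcal G$, and so the base case $|g|_{X\cup\mathcal H}<\zeta$ is absorbed). The induction on $|g|_{X\cup\mathcal H}$ then terminates because every splitting strictly decreases this length. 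The step I expect to be the main obstacle is the bookkeeping around the $E$-fine reduction: the hypotheses of Lemma~\ref{lem:fp_close_to_fine} require the geodesic to be $E$-fine, but a geodesic between two points of $\F$ need not be; one must argue that if it is not $E$-fine, then Lemma~\ref{lem:conn_to_comp->close_to_fix} already hands us the desired short vertex, and check that this vertex really does split $g$ into two elements of $\F$ of strictly smaller length (after absorbing a bounded correction term into $\mathcal G$) — i.e. that the two reduction mechanisms dovetail and that in both branches the length strictly drops, so the induction is well-founded. Everything else is a direct citation of the technical lemmas already proved.
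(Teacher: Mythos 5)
There is a genuine gap, and it is exactly at the point you flag as ``the main obstacle'': the well-foundedness of your descent in the branch where the geodesic $[1,g]$ is \emph{not} fine. If $[1,g]$ has a component $e$ connected to its companion, Lemma \ref{lem:conn_to_comp->close_to_fix} gives you $x\in\F$ with $\dx(e_{\pm},x)\le\mu$, but this splits $g$ into two \emph{strictly shorter} elements of $\F$ only when the splitting vertex is at $\dxh$-distance greater than $\mu$ from \emph{both} endpoints; if the bad component sits within distance $\mu$ of an endpoint (e.g.\ it is the first edge of $[1,g]$), then one factor lies in your set $\mathcal G$ but the other factor has length up to $|g|_{X\cup\mathcal H}+\mu$, so the induction on $|g|_{X\cup\mathcal H}$ does not terminate. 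Worse, in the case where \emph{all} components connected to their companions cluster within distance $\mu$ of the two endpoints, neither branch of your dichotomy applies: you cannot split profitably, and you cannot invoke Lemma \ref{lem:fp_close_to_fine} either, since its hypothesis requires the \emph{entire} geodesic, with both endpoints in $\F$, to be $E$-fine --- which $[1,g]$ is not. Your proposal never supplies the mechanism that handles this case.

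The paper closes precisely this hole with three ingredients your sketch omits. First, it takes a vertex $v$ near the \emph{midpoint} of $[1,w]$ and the maximal $0$-fine subpath $q$ of $[1,w]$ containing $v$; the midpoint position is what guarantees, in the cases $\dxh(1,q_-)>\mu$ or $\dxh(q_+,w)>\mu$, that \emph{both} factors of the two-term splitting are strictly shorter. Second, in the remaining case ($q_-$ and $q_+$ both $\mu$-close to the endpoints) it replaces $q_{\pm}$ by nearby elements $x,y\in\F$ and uses Lemma \ref{lem:E-fine_close_E'-fine} to pass from the $0$-fine path $q$ to a genuinely $E$-fine geodesic $[x,y]$ with endpoints in $\F$ (with $E=2\e$, not a ``large'' $E\ge T$ --- your worry about enlarging $E$ is beside the point, since $\mu$ and Lemma \ref{hren} do not depend on $E$), so that Lemma \ref{lem:fp_close_to_fine} becomes applicable. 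Third, the constants are calibrated for this transfer: one needs $R>2\mu+\xi$ (not merely $R_0>\xi$, because the endpoints have been moved by $\mu$) and $P=\zeta+4\mu+1$, and the resulting decomposition has four factors, $w=x(x^{-1}z)(z^{-1}y)(y^{-1}w)$, each strictly shorter than $w$. Without the midpoint/maximal-fine-subpath device and the endpoint-perturbation step via Lemma \ref{lem:E-fine_close_E'-fine}, your induction is not well-founded, so the argument as written does not prove the lemma.
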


\begin{proof} Let $\mu \ge 0$ be the constant from Lemma \ref{lem:conn_to_comp->close_to_fix}, set
$E_0=0$ and let $E \ge 0$ be the corresponding constant from Lemma \ref{lem:E-fine_close_E'-fine}.
Let $\xi=\xi(E) \ge 0$ be from Lemma \ref{lem:fp_close_to_fine}, take any real number $R>2\mu+\xi$ and
let $\zeta=\zeta(E,R)$ be given by Lemma \ref{lem:fp_close_to_fine}. Now, we define $P=\zeta+4\mu+1$
and claim that $\F$ is generated by the bounded set $\{g \in \F\,|\, |g|_{X\cup\mathcal{H}} \le P\}$.

Indeed, consider any element $w \in \F$ with $|w|_{X\cup\mathcal{H}} > P$. We will show that $w$
is a product of shorter elements from $\F$ (with respect to the generating set $X \cup\mathcal{H}$).
Take any geodesic path $p$ connecting $1$ with $w$ in $\G$, and let $v$ be a vertex of $p$
which lies within distance $1/2$ from its midpoint (so that $\dxh(1,v) \le (l(p)+1)/2$ and
$\dxh(v,w) \le (l(p)+1)/2$). Let $q$ be the maximal (simplicial)
$0$-fine subpath of $p$ containing $v$ (in general $q$ could consist of the single vertex $v$).

\textit{Case 1.} Suppose that $\dxh(1,q_-) >\mu$. Since $q$ is maximal, we see that the edge $e$ of $p$
preceding $q$ (with $e_+=q_-$) must be a component connected with its companion,
hence there exists $x\in \F$
such that $\dx(q_-,x) \le \mu$ by Lemma \ref{lem:conn_to_comp->close_to_fix}. Evidently
$\dxh(q_-,x)\le \dx(q_-,x)\le \mu$.
Since $v$ belongs to $q$ we have $\dxh(1,q_-) \le \dxh(1,v)$, and using the triangle inequality we see that
$$|x|_{X\cup\mathcal{H}}=\dxh(1,x)\le \dxh(1,q_-) + \dxh(q_-,x) \le (l(p)+1)/2 +\mu
< l(p)=|w|_{X\cup\mathcal{H}},$$ because $|w|_{X\cup\mathcal{H}}>P\ge 2\mu+1$. On the other hand,
recalling that $p$ is a geodesic path and $\dxh(1,q_-) >\mu$, we obtain
$$|x^{-1}w|_{X\cup\mathcal{H}}=\dxh(x,w) \le \mu+\dxh(q_-,w) \le \mu+l(p)-\dxh(1,q_-)
<l(p)=|w|_{X\cup\mathcal{H}}.$$ Thus we have shown that $w=x (x^{-1}w)$, where $x,x^{-1}w \in \F$
and each of these elements is strictly shorter than $w$.

\textit{Case 2.} Suppose that $\dxh(q_+,w) >\mu$. This case can be treated similarly to Case 1.

\textit{Case 3.} Suppose that $\dxh(1,q_-) \le \mu$ and $\dxh(q_+,w) \le \mu$. Again, since $q$ is maximal
and the endpoints of $p$ are in $\F$, we can use Lemma \ref{lem:conn_to_comp->close_to_fix}
to find $x,y \in \F$ such that $\dx(q_-,x)\le \mu$ and $\dx(q_+,y)\le \mu$. Since the geodesic path
$q$ is $0$-fine, any geodesic $[x,y]$ from $x$ to $y$ is $E$-fine (Lemma \ref{lem:E-fine_close_E'-fine}).
Moreover, by the triangle inequality, $l([x,y]) \ge l(q)-2\mu \ge l(p)-4\mu>P-4\mu>\zeta $, hence we
can apply Lemma \ref{lem:fp_close_to_fine}, to find a vertex $u$ on $[x,y]$ and an element $z \in \F$
such that $\dxh(x,u) \ge R$, $\dxh(u,y) \ge R$ and $\dx(u,z) \le \xi$ (see Figure \ref{fig:3}).

\begin{figure}[!ht]
\begin{center}
   \input{fig3.pstex_t}

\end{center}
\caption{} \label{fig:3}
\end{figure}

We will now estimate the lengths of the elements $x,x^{-1}z,z^{-1}y$ and $y^{-1}w$ (with respect to
the generating set $X\cup \mathcal{H}$). Observe that
$|x|_{X\cup \mathcal{H}}=\dxh(1,x) \le \dxh(1,q_-)+\dxh(q_-,x) \le 2\mu<P<|w|_{X\cup \mathcal{H}}$.
And similarly, $|y^{-1}w|<|w|_{X\cup \mathcal{H}}$. On the other hand
\begin{multline*}
|x^{-1}z|_{X\cup \mathcal{H}}=\dxh(x,z) \le \dxh(x,u)+\dxh(u,z)\le \dxh(x,y)-\dxh(u,y)+\xi \le\\
\le \dxh(q_-,q_+)+2\mu-R+\xi \le \dxh(1,w)+2\mu-R+\xi<|w|_{X\cup \mathcal{H}}.
\end{multline*}
In the same way we can show that $|z^{-1}y|_{X\cup \mathcal{H}}<|w|_{X\cup \mathcal{H}}$. Therefore
we have found a decomposition $w=x(x^{-1}z)(z^{-1}y)(y^{-1}w)$, where all of the elements
$x,x^{-1}z,z^{-1}y,y^{-1}w$ belong to $\F$ and are strictly shorter than $w$.

Thus we have considered all the possible cases and proved the lemma.
\end{proof}

The proof of the main result of this paper will require one more lemma:
\begin{lem}\label{lem:qc-aux} For any $E\ge 0$
there exists $\delta \ge 0$ such that the following holds. Let $[x,y]$ be an $E$-fine geodesic segment
in $\G$ with $x,y \in \F$. Then for any vertex $v$ of $[x,y]$ we have $\dx(v,\F) \le \delta$.
\end{lem}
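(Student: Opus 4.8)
The plan is to show that every vertex $v$ of an $E$-fine geodesic $[x,y]$ (with $x,y\in\F$) is uniformly close to $\F$ in the metric $\dx$, by combining the ``bounded generation'' result (Lemma \ref{lem:bound_gen}), the fact that $\phi$ respects the peripheral structure, and the local finiteness of $E$-fine geodesics. First I would fix the constant $P$ from Lemma \ref{lem:bound_gen} and observe that if the $\dxh$-distance from $v$ to the \emph{nearer} endpoint is bounded, then $v$ is already uniformly $\dxh$-close (hence $\dx$-close, after one more application of Bounded Coset Penetration to move from a coset vertex to a phase vertex) to that endpoint, which lies in $\F$. So the interesting case is when $v$ is ``deep inside'' $[x,y]$, i.e., both $\dxh(x,v)$ and $\dxh(v,y)$ exceed some threshold $R$ to be chosen.

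For the deep case I would run essentially the same ``cut and paste'' argument as in Lemma \ref{lem:bound_gen}, but now tracking the position of $v$. Concretely: let $R$ be large (larger than the relevant $\xi(E)$ and $\mu$-constants), apply Lemma \ref{lem:fp_close_to_fine} to produce a vertex $u\in[x,y]$ with $\dxh(x,u)\ge R$, $\dxh(u,y)\ge R$ and $\dx(u,\F)\le\xi$. Using Lemma \ref{lem:N-1} together with the projection/triangle machinery (Lemmas \ref{proj}, \ref{lem:pt_close_to_sides->close_to_image}, \ref{lem:N-3}) exactly as in Lemma \ref{lem:fp_close_to_fine}, one controls where such a vertex $u$ can sit: it lies within a bounded $\dxh$-distance of any prescribed point at depth $\ge R$ from both ends. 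Then an induction on $\min\{\dxh(x,v),\dxh(v,y)\}$ (or, alternatively, on the number of times one iterates the ``find a nearby point of $\F$ and restart with a shorter $E$-fine geodesic'' procedure) shows that $v$ itself is boundedly close to $\F$: one peels off bounded-length pieces from the ends, each piece reachable from a vertex of $\F$ close to $[x,y]$ (using Lemma \ref{lem:E-fine_close_E'-fine} to keep the sub-segments $E$-fine, possibly after enlarging $E$ once at the start so the constant stabilizes), until $v$ falls into a bounded window around one of these $\F$-vertices. The finiteness of $\mathcal{I}(x,E,R)$ (Corollary \ref{cor:loc_fin}) and the $x$-translation-invariance noted there guarantee that all the constants produced are independent of $[x,y]$, $x$, $y$, and $v$, yielding the uniform bound $\delta=\delta(E)$.

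The main obstacle I anticipate is the bookkeeping in the induction: Lemma \ref{lem:fp_close_to_fine} gives a vertex of $\F$ near a vertex $u$ somewhere in the middle region, not at a controlled depth, so one must be careful that iterating the construction actually makes progress toward $v$ (i.e., strictly decreases $\min\{\dxh(x,v),\dxh(v,y)\}$ along the shortened segment) and that the $E$-fineness is preserved at each stage — this is exactly where Lemma \ref{lem:E-fine_close_E'-fine} must be invoked with $E_0=0$ first, so that after one enlargement the fineness constant $E$ is self-reproducing and the induction does not blow up the parameter. Once the progress measure and the stability of $E$ are in place, each inductive step is a routine application of the triangle inequality together with Lemmas \ref{lem:N-1}, \ref{lem:conn_to_comp->close_to_fix}, and \ref{lem:fp_close_to_fine}, and the base case is the ``shallow $v$'' situation handled at the outset.
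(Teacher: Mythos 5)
Your outline diverges from the paper's argument, and as written it has a genuine gap in the step you yourself flag as the main obstacle. The induction that repeatedly applies Lemma \ref{lem:fp_close_to_fine} and ``restarts with a shorter fine geodesic'' cannot keep its constants uniform: every restart replaces an endpoint by a nearby element of $\F$ and then invokes Lemma \ref{lem:E-fine_close_E'-fine}, but that lemma always \emph{strictly} enlarges the fineness constant ($E_0\mapsto E_0+2\e$ with $\e=\e(1,0,\mu)>0$, where the perturbation size $\mu$ itself involves $\xi(E_i)$ from the previous stage), so no choice of initial $E_0$ makes the constant ``self-reproducing''. Since the number of restarts needed to reach a deep vertex $v$ grows with $l([x,y])$, the quantities $\xi(E_i)$, $\zeta(E_i,R)$ and the window sizes degrade along the iteration, and you never obtain a $\delta$ depending on $E$ alone. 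Relatedly, Lemma \ref{lem:fp_close_to_fine} does not place its vertex $u$ ``within a bounded $\dxh$-distance of any prescribed point at depth $\ge R$'': its proof produces a single vertex at depth between $R$ and $\zeta-R$ from one end, with no control near the given $v$ --- which is precisely why your scheme is forced into the unbounded iteration. A smaller (repairable) slip is the shallow case: bounded $\dxh$-distance to an endpoint never gives bounded $\dx$-distance ``after one more application of Lemma \ref{BCP}'', since a single edge of $[x,y]$ may have arbitrarily large $\lx$; the correct tool there is Corollary \ref{cor:loc_fin}, which bounds $\lx$ of initial segments of bounded length of $E$-fine geodesics joining points of $\F$.

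The paper's proof avoids iteration altogether, and this is the idea your outline is missing. Bounded generation (Lemma \ref{lem:bound_gen}) gives a chain $z_0=x,z_1,\dots,z_n=y$ in $\F$ with $\dxh(z_{i-1},z_i)\le P$. Projecting each $z_i$ onto $[x,y]$ and using Lemmas \ref{hren}, \ref{proj}, \ref{lem:pt_close_to_sides->close_to_image} and \ref{lem:N-3} (the triangles $\Delta(x,y,z_i)$ have no $E$-large central component because $[x,y]$ is $E$-fine) yields vertices $u_i\in[x,y]$ with $\dx(u_i,u_i')\le\mu$ for suitable $u_i'\in\F$, and Lemma \ref{lem:N-1} makes consecutive projections $\dxh$-close, $\dxh(u_{i-1},u_i)\le P+\rho$. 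Any vertex $v$ lies between some $u_{i-1}$ and $u_i$; a \emph{single} application of Lemma \ref{lem:E-fine_close_E'-fine} shows the short geodesic $q_i$ from $u_{i-1}'$ to $u_i'$ is $E'$-fine with $E'$ depending only on $E$ and $\mu$, Lemma \ref{lem:qg-close} moves $v$ to a vertex $v'$ of $q_i$ with $\dx(v,v')\le\nu$, and Corollary \ref{cor:loc_fin} with $R=P+\rho+2\mu$ converts the bounded length of $q_i$ into $\dx(v',u_{i-1}')\le C$, giving $\delta=C+\nu$. Because the fine geodesic is perturbed only once and the $\F$-near points are produced simultaneously all along the original $[x,y]$, every constant is fixed in advance. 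To salvage your plan you would need a device of exactly this kind --- producing points of $\F$ at bounded $\dxh$-gaps along $[x,y]$ in one step --- rather than the restart procedure.
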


\begin{proof} Let $P \ge 0$ be the constant from Lemma \ref{lem:bound_gen}. Then there are elements
$z_0,\dots,z_n \in \F$ such that $z_0=x$, $z_n=y$ and $\dxh(z_{i-1},z_i) \le P$ for every $i=1,2,\dots,n$.
Since the geodesic $[x,y]$ is $E$-fine, for every $i$, any geodesic triangle $\Delta_i$
with vertices $x,y,z_i$ will not contain an $E$-large central component (by Lemma \ref{hren}). Hence
for each $i=0,\dots,n$
we can choose $u_i \in {\bf pr}_{[x,y]}(z_i)$ according to the claim of Lemma \ref{proj}, and then
combine the statements of Lemmas \ref{lem:pt_close_to_sides->close_to_image} and \ref{lem:N-3}
to conclude that there exists $u_i' \in \F$ such that
$\dx(u_i,u_i') \le \mu$, where $\mu \ge 0$ is some predetermined constant.

Consider any vertex $v$ of $[x,y]$. Evidently, there exists $i \in \{1,\dots,n\}$ such that
$v$ lies on the subpath $p_i$ of $[x,y]$ (or of $[x,y]^{-1}$) from $u_{i-1}$ to $u_i$. Choose any geodesic
$q_i$ from $u_{i-1}'$ to $u_i'$. By Lemma \ref{lem:E-fine_close_E'-fine} $q_i$ is $E'$-fine, where
$E' \ge 0$ depends only on $E$ and $\mu$, and by Lemma \ref{lem:qg-close} there exist $\nu=\nu(1,0,\mu) \ge 0$
and a vertex  $v'$ of $q_i$ such that $\dx(v,v') \le \nu$ (see Figure \ref{fig:4}).

\begin{figure}[!ht]
\begin{center}
   \input{fig4.pstex_t}

\caption{} \label{fig:4}
\end{center}
\end{figure}

Using the
triangle inequality together with Lemma \ref{lem:N-1} we achieve
\begin{multline*} l(q_i)=\dxh(u_{i-1}',u_i') \le \dxh(u_{i-1}',u_{i-1})+ \dxh(u_{i-1},u_i)+\dxh(u_i,u_i')
\le  \\ \dxh(u_{i-1},u_i) + 2\mu \le \dxh(z_{i-1},z_i)+\rho+2\mu \le P+\rho+2\mu.
\end{multline*}
Set $R=P+\rho+2\mu$ and let $C=C(E',R)$ be from Corollary \ref{cor:loc_fin}. Then
$\dx(v',u_{i-1}') \le C$ by the latter corollary, and therefore
$\dx(v,u_{i-1}') \le \delta$, where $\delta = C + \nu$.
\end{proof}

\begin{proof}[Proof of Theorem \ref{thm:main}]
Consider any geodesic path $[g,h]$ in $\G$ with $g,h \in \F$, and let $w$ be any vertex on it.
Let $q$ be the maximal $0$-fine subpath of $[g,h]$ containing $w$. Now we can argue as in the proof of
Lemma \ref{lem:bound_gen} to find elements $x,y \in \F$ such that $\dx(q_-,x) \le \mu$ and
$\dx(q_+,y) \le \mu$, where $\mu\ge 0$ is given by Lemma \ref{lem:conn_to_comp->close_to_fix}.
Since $q$ is $0$-fine, the path $[x,y]$ is $E$-fine for some predetermined constant $E\ge 0$ by Lemma \ref{lem:E-fine_close_E'-fine}. Let $\nu=\nu(1,0,\mu) \ge 0$ be the constant provided by Lemma \ref{lem:qg-close}.
Then, after fixing any geodesic  $[x,y]$ from $x$ to $y$, we can apply this lemma to
find a vertex $v$ of $[x,y]$ such that $\dx(w,v) \le \nu$.  And Lemma \ref{lem:qc-aux} tells us that
$\dx(v,\F) \le \delta$, for some $\delta \ge 0$ which is independent of $v$. Hence $\dx(w,\F) \le \nu+\delta$,
implying that $\F$ is relatively quasiconvex with $\sigma= \nu+\delta$.
\end{proof}

\begin{proof}[Proof of Corollary \ref{cor:all_fg}]
Recall that every non-virtually cyclic group hyperbolic relative to a collection of proper subgroups
contains a non-abelian free subgroup (cf. \cite[Prop. 6.5]{DS}). Hence every slender group is either virtually cyclic or NRH.
If $G$ is hyperbolic relative to $\Hl$ and some $H_\lambda$ is virtually cyclic, then $H_\lambda $
can be excluded from the collection of peripheral subgroups of $G$ (by \cite[Theorem 2.40]{Osi06}
or by \cite[Cor. 1.14]{DS}). Since $|\Lambda |<\infty $, we can exclude all virtually cyclic
peripheral subgroups and obtain an NRH peripheral structure. Now the claim follows from
Corollary \ref{cor1} and Remark \ref{rem:parab_fg->gp_fg}.
\end{proof}

In fact, the statement of Corollary  \ref{cor:all_fg} can be refined as follows:

\begin{cor} \label{cor:pass_to_rel} Let $G$ be a finitely generated group hyperbolic relative
to a family of NRH subgroups $\Hl$. Suppose that for every $\lambda \in \Lambda$ and each
$\psi \in Aut(H_\lambda)$ the subgroup ${\rm Fix}(\psi) \le H_\lambda$ is finitely generated [finitely presented]. Then for any $\phi \in \A$, $\F$ is also finitely generated [resp., finitely presented].
\end{cor}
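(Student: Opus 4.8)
The plan is to deduce this strengthening directly from Theorem~\ref{thm:main}, Corollary~\ref{cor1} and the malnormality properties of peripheral subgroups. By Theorem~\ref{thm:main}, $\F$ is relatively quasiconvex in $G$, so by Corollary~\ref{cor1} the group $\F$ is hyperbolic relative to a \emph{finite} collection of representatives of the $\F$-conjugacy classes of the subgroups in $\mathcal P^\phi=\{P\cap\F\mid P\in\mathcal P,\ |P\cap\F|=\infty\}$. In view of Remark~\ref{rem:parab_fg->gp_fg}, it therefore suffices to show that every subgroup of the form $P\cap\F$, with $P\in\mathcal P$ and $|P\cap\F|=\infty$, is finitely generated [respectively, finitely presented]; this will be achieved by identifying it with the fixed subgroup of a suitable automorphism of a peripheral subgroup $H_\lambda$.

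So fix $P=g^{-1}H_\lambda g$ with $g\in G$, $\lambda\in\Lambda$ and $|P\cap\F|=\infty$. The first step is to locate $\phi(P)$ relative to $P$. Since $\phi$ respects the peripheral structure (Lemma~\ref{fixps}) and $\phi(H_\lambda)=f_\lambda^{-1}H_{\lambda'}f_\lambda$, the subgroup $\phi(P)=\phi(g)^{-1}f_\lambda^{-1}H_{\lambda'}f_\lambda\phi(g)$ is a conjugate of $H_{\lambda'}$. Every $x\in P\cap\F$ satisfies $x=\phi(x)\in\phi(P)$, so $P\cap\F\le P\cap\phi(P)$. If $\lambda'\ne\lambda$, then $|P\cap\phi(P)|<\infty$ by part~(1) of Lemma~\ref{maln}, contradicting $|P\cap\F|=\infty$; hence $\lambda'=\lambda$. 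Moreover $N_G(H_\lambda)=H_\lambda$: if $a\in N_G(H_\lambda)\setminus H_\lambda$, then $H_\lambda\cap H_\lambda^a=H_\lambda$ is infinite, contradicting part~(2) of Lemma~\ref{maln}. Consequently, if the conjugates $P$ and $\phi(P)$ of $H_\lambda$ were distinct, part~(2) of Lemma~\ref{maln} would again give $|P\cap\phi(P)|<\infty$. Therefore $P=\phi(P)$, which means precisely that $h_0:=f_\lambda\phi(g)g^{-1}\in H_\lambda$, i.e. $\phi(g)=f_\lambda^{-1}h_0g$.

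Now define $\psi\colon H_\lambda\to H_\lambda$ by $\psi(h)=h_0^{-1}f_\lambda\phi(h)f_\lambda^{-1}h_0$. Because $\phi(H_\lambda)=f_\lambda^{-1}H_\lambda f_\lambda$ and $h_0\in H_\lambda$, the map $\psi$ is the composition of the isomorphism $h\mapsto f_\lambda\phi(h)f_\lambda^{-1}$ of $H_\lambda$ onto itself with conjugation by $h_0^{-1}$, hence $\psi\in Aut(H_\lambda)$. A short computation, using $\phi(g)=f_\lambda^{-1}h_0g$ (so that $\phi(g)^{-1}=g^{-1}h_0^{-1}f_\lambda$), gives $\phi(g^{-1}hg)=\phi(g)^{-1}\phi(h)\phi(g)=g^{-1}h_0^{-1}f_\lambda\phi(h)f_\lambda^{-1}h_0g=g^{-1}\psi(h)g$ for every $h\in H_\lambda$. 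Thus $g^{-1}hg\in\F$ if and only if $\psi(h)=h$, so $P\cap\F=g^{-1}\,{\rm Fix}(\psi)\,g$, which is isomorphic to ${\rm Fix}(\psi)\le H_\lambda$. By hypothesis ${\rm Fix}(\psi)$ is finitely generated [respectively, finitely presented], and hence so is $P\cap\F$. Together with the first paragraph and Remark~\ref{rem:parab_fg->gp_fg}, this shows that $\F$ is finitely generated [respectively, finitely presented].

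The routine but slightly delicate part is the second paragraph: one must apply both parts of Lemma~\ref{maln} carefully --- first to force $\lambda'=\lambda$, and then to force $\phi(P)=P$ exactly rather than merely an infinite overlap --- and then make sure the automorphism $\psi$ is normalized by $h_0$ so that the conjugation identity $\phi(g^{-1}hg)=g^{-1}\psi(h)g$ holds on the nose. No geometry is needed beyond Theorem~\ref{thm:main} and Corollary~\ref{cor1}, which have already been established.
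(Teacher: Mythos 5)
Your argument is correct and follows essentially the same route as the paper: reduce via Corollary \ref{cor1} and Remark \ref{rem:parab_fg->gp_fg} to the infinite intersections $P\cap\F$, use both parts of Lemma \ref{maln} to force $\phi(P)=P$, and identify $P\cap\F$ with the fixed subgroup of an automorphism of $H_\lambda$. The only (harmless) difference is cosmetic: you transport the restriction of $\phi$ back to $H_\lambda$ by an explicit normalized automorphism $\psi(h)=h_0^{-1}f_\lambda\phi(h)f_\lambda^{-1}h_0$, whereas the paper restricts $\phi$ to the conjugate $h^{-1}H_\lambda h$ and then invokes the isomorphism $h^{-1}H_\lambda h\cong H_\lambda$.
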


\begin{proof} By Corollary \ref{cor1}, $\F$ is hyperbolic relative to the finite
collection $\mathcal{O}$ of representatives of the orbits of the natural action of
$\F$ on $\mathcal{P}^\phi$. Now, for any such representative $O \in \mathcal{O}$, we have
that $|O|=\infty$ and $O=\F \cap h^{-1} H_\lambda h$, for some $\lambda \in \Lambda$ and
$h \in G$. Since $\phi$ respects the peripheral structure of $G$ (Lemma \ref{fixps}), there exist
$\mu \in \Lambda$ and $g \in G$ such that $\phi(H_\lambda) =g^{-1} H_\mu g$. However, we have
$O \le h^{-1} H_\lambda h \cap \phi (h^{-1} H_\lambda h)$ and
$\phi (h^{-1} H_\lambda h) = f^{-1} H_\mu f$,
where $f=g\phi(h)\in G$. And since $O$ is infinite, Lemma \ref{maln} yields that $\mu=\lambda$ and
$h^{-1} H_\lambda h=f^{-1} H_\mu f=\phi (h^{-1} H_\lambda h)$. Hence $O={\rm Fix}(\psi)$, where
$\psi \in Aut(h^{-1}H_\lambda h)$ is defined as the restriction of $\phi$ to $h^{-1}H_\lambda h$.
Finally, after observing that $h^{-1}H_\lambda h \cong H_\lambda$, we can use the assumptions of
the corollary together with Remark \ref{rem:parab_fg->gp_fg} to obtain the desired statement.
\end{proof}



\begin{thebibliography}{99}

\bibitem{Alib} E. Alibegovi\'c,
A combination theorem for relatively hyperbolic groups.
\textit{Bull. London Math. Soc.} \textbf{37} (2005), no. 3, 459--466.

\bibitem{AAS}
J.W. Anderson, J. Aramayona, K.J. Shackleton,
An obstruction to the strong relative hyperbolicity of a group,
{\it J. Group Theory} {\bf 10} (2007), no. 6, 749--756.

\bibitem{BDM} J. Behrstock, C. Dru\c tu, L. Mosher,
Thick metric spaces, relative hyperbolicity, and quasi-isometric rigidity.
\textit{Math. Ann.} \textbf{344} (2009), no. 3, 543--595.

\bibitem{Bow}
B.H. Bowditch, Relatively hyperbolic groups, preprint, 1999.

\bibitem{CT}
D.J. Collins, E.C. Turner, Free product fixed points, {\it J. London Math. Soc.} (2) {\bf 38} (1988),
no. 1, 67--76.

\bibitem{Dahmani} F. Dahmani,
Combination of convergence groups. 
\textit{Geom. Topol.} \textbf{7} (2003), 933--963.

\bibitem{DS}
C. Dru\c tu, M. Sapir, Tree graded spaces and
asymptotic cones. With an appendix by D. Osin and M. Sapir, \textit{Topology}
{\bf 44} (2005), no. 5, 959--1058.

\bibitem{DMS}
C. Dru\c tu, S. Moses, M. Sapir,
Divergence in lattices in semisimple Lie groups and graphs of groups, {\it
Trans. Amer. Math. Soc.} {\bf 362} (2010), no. 5, 2451--2505.

\bibitem{Farb} B. Farb, Relatively hyperbolic groups,
\textit{Geom. Funct. Anal.} \textbf{8} (1998), no. 5, 810--840.

\bibitem{Ger}
S.M. Gersten, Fixed points of automorphisms of free groups, {\it
Adv. in Math.} {\bf 64} (1987), no. 1, 51--85.

\bibitem{G-S} S.M. Gersten, H.B. Short, Rational subgroups of biautomatic groups. \textit{Ann. of Math. (2)} \textbf{134} (1991), no. 1, 125--158.

\bibitem{Gro} M. Gromov, Hyperbolic groups, \textit{Essays in group theory},
75--263, Math. Sci. Res. Inst. Publ., \textbf{8}, Springer, New York, 1987.

\bibitem{Hr}
G.C. Hruska, Relative hyperbolicity and relative quasiconvexity for countable groups.
\textit{Algebr. Geom. Topol.} \textbf{10} (2010), no. 3, 1807--1856.

\bibitem{Min-RAAG} A. Minasyan, Hereditary conjugacy separability of right angled Artin groups
and its applications.  \textit{Groups Geom. Dyn.}, to appear. \texttt{arXiv:0905.1282}

\bibitem{Neumann}
W.D. Neumann, The fixed group of an automorphism of a word hyperbolic group is rational,
\textit{Invent. Math.}  \textbf{110}  (1992),  no. 1, 147--150.

\bibitem{N-S} W.D. Neumann, M. Shapiro,
Equivalent automatic structures and their boundaries.
\textit{Internat. J. Algebra Comput.} \textbf{2} (1992), no. 4, 443--469.

\bibitem{Osi06}
D. Osin, {Relatively hyperbolic groups: Intrinsic
geometry, algebraic properties, and algorithmic problems},
\textit{Memoirs Amer. Math. Soc.} {\bf 179} (2006), no. 843.

\bibitem{Reb} D.Y. Rebbechi,
Algorithmic Properties of Relatively Hyperbolic Groups.
PhD thesis, Rutgers Newark, 81 pp. \texttt{arXiv:math/0302245}
\end{thebibliography}
\end{document}